\newcommand{\leqnomode}{\tagsleft@true}
\newcommand{\reqnomode}{\tagsleft@false}
\numberwithin{equation}{section}
\newtheorem{thm}{Theorem}[section]
\newtheorem{lem}[thm]{Lemma}
\newtheorem{cor}[thm]{Corollary}
\newtheorem{Prop}[thm]{Proposition}
\newtheorem{Def}[thm]{Definition}
\newtheorem{Rem}[thm]{Remark}
\title[Critical heat equation with nonlocal interaction]{Asymptotic behavior of solutions for a critical heat equation with nonlocal reaction}
\author[J.\ Zhang]{Jian Zhang}
\author[J. \ Giacomoni]{Jacques Giacomoni}
\author[V.D.\ R\u adulescu]{{Vicentiu D. R\u adulescu}}
\author[M.\ Yang]{Minbo Yang}
\address{Jian Zhang  \newline\indent School of Mathematical Sciences, Zhejiang Normal University, \newline\indent
	Jinhua, Zhejiang, 321004, People's Republic of China}
\email{\href{mailto:email} {jzhang2021@zjnu.edu.cn}}
\address {Jacques Giacomoni\newline\indent LMAP, UMR E2S-UPPA CNRS 5142, B\^atiment IPRA, Avenue de l'Universit\'e F-64013 Pau, France}
\email{\href{mailto:email}{jacques.giacomoni@univ-pau.fr}}
\address{Vicentiu D. R\u adulescu \newline\indent Faculty of Applied Mathematics, AGH University of Krak\'ow, 30-059
	Krak\'ow, Poland \newline\indent
	Department of Mathematics, University of
	Craiova, 200585 Craiova, Romania \newline\indent
	School of Mathematical Sciences, Zhejiang Normal University, \newline\indent
	Jinhua, Zhejiang, 321004, People's Republic of China}
\email{ \href{mailto:email} {radulescu@inf.ucv.ro}}
\address{Minbo Yang  \newline\indent School of Mathematical Sciences, Zhejiang Normal University, \newline\indent
	Jinhua, Zhejiang, 321004, People's Republic of China}
\email{\href{mailto:email} {mbyang@zjnu.edu.cn}}
\begin{document}
	\maketitle
	
	\begin{abstract}
		In this paper, we consider the following nonlocal parabolic equation
		\begin{equation*}
			u_{t}-\Delta u=\left( \int_{\Omega}\frac{|u(y,t)|^{2^{\ast}_{\mu}}}{|x-y|^{\mu}}dy\right) |u|^{2^{\ast}_{\mu}-2}u,\ \text{in}\ \Omega\times(0,\infty),
		\end{equation*}
		where $\Omega$ is a bounded domain in $\mathbb{R}^{N}$, $0<\mu<N$ and $2^{\ast}_{\mu}=(2N-\mu)/(N-2)$ denotes the critical exponent in the sense of the Hardy-Littlewood-Sobolev inequality. We first introduce the stable and unstable sets for the equation and prove that the problem has a potential well structure. Next, we investigate the global asymptotic behavior of the solutions.
		In particular, we study the behavior of the global solutions that intersect neither with the stable set nor the unstable set.
		Finally, we prove that global solutions have $L^{\infty}$-uniform bound under some natural conditions.
		
		\smallskip \noindent{\sc Key words and phrases:} Potential well, Stable and unstable set, Global existence, Blow-up, uniform $L^{\infty}$ bound.
		
	\smallskip \noindent{\sc 2020 Mathematics Subject Classification:} 35K55 (primary); 35A01, 35B33, 35B40, 35B44, 35K05, 58J35, 58J37 (secondary).
	\end{abstract}
	
	\section{Introduction}
	In this paper, we are concerned with the following parabolic initial-boundary value problem:
	\begin{equation}\label{E}
		\left\{
		\begin{array}{lll}
			u_{t}-\Delta u=\left( \int_{\Omega}\frac{|u(y,t)|^{2^{\ast}_{\mu}}}{|x-y|^{\mu}}dy\right) |u|^{2^{\ast}_{\mu}-2}u,\ & x\in\Omega,\ t>0\\
			u(x,t)=0,\ & x\in\partial\Omega,\ t>0\\
			u(x,0)=u_{0}(x),\ & x\in\Omega,
		\end{array}
		\right.  \tag{$E$}
	\end{equation}
	where $\Omega$ is a bounded domain with smooth boundary in $\mathbb{R}^{N} (N\geq3)$, $0<\mu<N$ and $2^{\ast}_{\mu}=\frac{2N-\mu}{N-2}$ is the critical exponent in the sense of the Hardy-Littlewood-Sobolev inequality. Depending on the choice of different initial value $u_{0}$, we are interested in the asymptotic behavior of solutions.
	
	The nonlocal parabolic equation \eqref{E} has a very important background arising from a variety of physical, chemical and biological problems. It can be applied to nonlocal heat physics, where the nonlinear term represents the nonlocal source, see \cite{Lacey}. It can also be applied to the population dynamics model with nonlocal competitions, where the  nonlinear term models the behavior of individuals interacting with others 
	%at their own point in the domain and other points with oneself
	, see \cite{Gourley, Ou-Wu}, and references therein.
	
	In the last decades, the following parabolic initial-boundary value problem
	\begin{equation}\label{A1}
		\left\{
		\begin{array}{lll}
			u_{t}-\Delta u=\vert u\vert^{p-1}u,\ & x\in\Omega,\ t>0\\
			u(x,t)=0,\ & x\in\partial\Omega,\ t>0\\
			u(x,0)=u_{0}(x),\ & x\in\Omega
		\end{array}
		\right.
	\end{equation}
	was studied extensively. People were interested in the qualitative properties of solutions of equation (\ref{A1}) such as the sharp critical exponent value, the blow-up rate, blow-up profiles and existence of self-similar solutions, among the others.
	
	In \cite{Sattinger} Sattinger constructed the so called stable set, using the
	method of potential well, widely investigated in former contributions to study the existence of global solutions, see \cite{Ebihara-Nakao, Ikehata, Ikehata-Suzuki, Ishii, Lions1, Nakao-Ono, Otani, Tsutsumi1,Tsutsumi2} and references therein. In \cite{Levine,Payne-Sattinger}, Levine, Payne and Sattinger considered the blowing-up properties of solutions respectively. It is well known that the solutions of problem \eqref{A1} are global if the initial value belongs to the stable set whereas the solutions blow-up in a finite time if the initial value is in the unstable set. Then, the method of potential well was extensively considered to study the global existence and the blow-up pattern of solutions. Later, Brezis and Cazenave \cite{Brezis-Cazenave}, Weissler \cite{Weissler1,Weissler2} considered the well-posedness in Lebesgue space $L^{q}$ and showed that the exponent $q_{c}=N(p-1)/2$ is critical for the local existence of solutions to  equation \eqref{A1}. More precisely, for $u_{0}\in L^{q}$, there exists a unique classical solution on $[0,T)$ if $q>q_{c}$ and the problem \eqref{A1} is not well-posed when $q<q_{c}$. Moreover, the maximal time $T=T_{\max}$ can be chosen independently of $u_{0}$, and then either $T_{\max}=\infty$ or $\lim_{t\rightarrow T_{\max}}\Vert u(t)\Vert_{q}=\infty$ for $q>q_{c}$. If $q=q_{c}>1$, there exists $T>0$ such that \eqref{A1} possess a unique classical solution, but $T>0$ cannot be chosen independently of $u_{0}$. Furthermore, they obtained the global existence and uniqueness of the solution.  Concerning blow-up in finite time of solutions results to problem \eqref{A1}, we refer to Kaplan \cite{Kaplan}, Levine \cite{Levine,Levine-Payne}, Lions \cite{Lions2} and Lee, Ni \cite{Lee-Ni} where some criteria on the initial value $u_{0}$ are derived.	
	
	If $1<p<(N+2)/(N-2)$ and $\Omega\subset\mathbb{R}^{N}$ is a bounded domain, by using the potential well method, Ikehata and Suzuki \cite{Ikehata-Suzuki1996} obtained later that there exist two invariant sets (called stable and unstable sets respectively) describing the long time behavior of solutions to equation (\ref{A1}). Precisely, if the initial data $u_{0}$ belongs to the stable set, then the associated solution is global. However, if initial data $u_{0}$ belongs to the unstable set, then the associated solution blows-up in a finite time, and the phenomenon of blow-up in infinite time does not occur. For the blow-up of the associated energy at $T<+\infty$, see also \cite{Giga1,Quittner1}. If $p=(N+2)/(N-2)$, Tan \cite{Tan} proved the existence and asymptotic estimates of global solutions and the  behavior of blow-up solutions to equation \eqref{A1} with lower-energy initial values. Then, he also proved that the global $H^{1}_{0}$ solutions are classical global solutions. Ikehata and Suzuki \cite{Ikehata-Suzuki,Suzuki} further considered the behavior of solutions to problem (\ref{A1}) with $p=(N+2)/(N-2)$.
	If $\Omega$ is star-shaped, they proved that the solutions which intersect the unstable set at some time blow-up in finite time and global solutions which intersect the stable set at some time converge to $0$ uniformly as $t\rightarrow\infty$. Consequently, global solutions which intersect neither the stable nor the unstable sets blow-up in infinite time. Later, Ishiwata \cite{Ishiwata} showed that the solutions intersecting the stable set at some time exist globally in time.	
	Considering the global existence and blow-up patterns issues, we would like to point out different features arising in the case of critical exponent and in the case of the subcritical exponent. Since the embedding from $H^{1}_{0}(\Omega)$ into $L^{r}(\Omega)$ is compact for $r<2N/(N-2)$ and non-compact for $r=2N/(N-2)$, it is well know that the blow-up in infinite time does not occur and finite blow-up of the associated energy arises for the subcritical case. However, global unbounded solutions may exist for the critical case, see \cite{Ni-Sacks-Tavantzis}.
	
	Up to our knowledge, the following parabolic initial-boundary problem with nonlocal interaction:
	\begin{equation}\label{A2}
		\left\{
		\begin{array}{lll}
			u_{t}-\Delta u=\left(\int_{\Omega}\frac{|u(y,t)|^{p}}{|x-y|^{\mu}}dy\right) |u|^{p-2}u,\ & x\in\Omega,\ t>0\\
			u(x,t)=0,\ & x\in\partial\Omega,\ t>0\\
			u(x,0)=u_{0}(x),\ & x\in\Omega,
		\end{array}
		\right.
	\end{equation}
	was not investigated so far and there are only few works dealing with the asymptotic behavior of the solutions. Precisely, Liu and Ma \cite{Liu-Ma} considered the global existence and blow-up  in finite time of solutions to equation (\ref{A2}) with $\mu=N-2$ and $1<p<(N+2)/(N-2)$. Using the potential well method, they derived a threshold between the occurrence of global existence versus finite time blow-up of the solutions. In \cite{Zhang-Yang}, authors obtained the global existence of weak solutions and blow-up in finite time for problem \eqref{A2} with the critical exponent $p=2^{\ast}_{\mu}$ in the sense of the Hardy-Littlewood-Sobolev inequality.
	
	Recently, there are some results about the asymptotic behavior of the following nonlinear Schr\"odinger equation
	\begin{equation}\label{A3}
		iu_{t}+\Delta u+\left(\int_{\mathbb{R}^{N}}\frac{|u(y,t)|^{2^{\ast}}}{|x-y|^{N-2}}dy\right) |u|^{2^{\ast}-2}u=0.
	\end{equation}
	In \cite{Li-Miao-Zhang,Miao-Xu-Zhao1,Miao-Xu-Zhao2,Miao-Wu-Xu}, Miao et al. established the global well-posedness, scattering and blow-up of radial solutions for equation \eqref{A3} and characterized the global behavior of the finite time blow-up solutions.
	
	To the best of our knowledge, there is not any complete analysis of the asymptotic behavior of problem \eqref{E}, for instance, in terms of $L^{\infty}$-bound. And so, the purpose of the present paper is to study further the global asymptotic behavior of solutions to problem \eqref{E}. More precisely, we discuss the occurrence of global existence and blow-up in finite time of solutions under suitable conditions on initial values. Precisely we establish that if the initial value  intersects with the stable set, then the solution is global whereas it blows-up in finite time if the initial value intersects the unstable set. We also prove that the solutions blow-up in infinite time if the orbit intersects neither with the stable set nor the unstable set. In addition, we study the sufficient and necessary condition under which we get a uniform $L^{\infty}$-bound for global solutions. Here, we would like to mention that, for the critical case in the sense of the Hardy-Littlewood-Sobolev inequality, the functional $J_{\mu}$ (defined in \eqref{Energy} below) associated to problem \eqref{E} does not satisfy the Palais-Smale ($PS$ for short) condition (or $(PS)_{c}$ condition at level $c$). Nethertheless Yang et al. \cite{Du-Yang, Gao-Yang} proved the existence of ground states and classified the positive solutions to the associated stationary problem of \eqref{E}.
	
	It is well know that the local well-posedness plays an important role in studying the global asymptotic behavior of the solutions of the evolution equation. In \cite{Cazenave-Weissler}, Cazenave and Weissler already identified that there is a connection between $H^{s}$-theory and $L^{q}$-theory. To the best of our knowledge,  this is not clear for our problem \eqref{E} since the nonlinear term is of nonlocal type. Using the classical Hardy-Littlewood-Sobolev inequality, we shall verify the local well-posedness of problem \eqref{E} in $H^{1}_{0}(\Omega)$.
	In this regard, we will study problem \eqref{E} via the  corresponding integral equation below:
	\begin{equation}\label{PI}
		u(x,t)=(e^{t\Delta}u_{0})(x)+\int_{0}^{t}e^{(t-s)\Delta}\left\lbrace \left(\int_{\Omega}\frac{|u(y,s)|^{2^{\ast}_{\mu}}}{|x-y|^{\mu}}dy\right)|u(s)|^{2^{\ast}_{\mu}-2}u(s)\right\rbrace (x)ds \tag{$E1$}
	\end{equation}
	for any $t \in \left[0,\infty\right) $ and $x\in \Omega$, where $-\Delta$ denotes the Dirichlet Laplacian on $L^{2}(\Omega)$ and $\left\lbrace e^{t\Delta}\right\rbrace _{t>0}$ is the associated generated semigroup of contractions, defined for any $f\in L^2(\Omega)$ by
	\begin{equation*}\label{Heat Semigroup}
		(e^{t\Delta}f)(x):=(G(t,\cdot)\ast f)(x)
		=\int_{\Omega}G(t,x-y)f(y)dy,\ t>0,\ x\in\Omega
	\end{equation*}
	with
	\begin{equation*}
		G(t,x):=(4\pi t)^{-\frac{N}{2}}e^{\frac{\vert x\vert^{^{2}}}{4t}},\ t>0,\ x\in \Omega.
	\end{equation*}
	
	We recall the following basic inequality.
	\begin{lem}\label{L-HLS}
		{\rm (Hardy-Littlewood-Sobolev inequality, see \cite{Lieb-Loss})} Let $t,r>1$ and $0<\mu<N$ with $1/t+\mu/N+1/r=2$. For $f\in L^{t}(\mathbb{R}^{N})$ and $h\in L^{r}(\mathbb{R}^{N})$, there exists a sharp constant $C(t,N,\mu,r)$ independent of $f$ and $h$, such that
		\begin{equation}\label{A4}
			\int_{\mathbb{R}^{N}}\int_{\mathbb{R}^{N}}\frac{f(x)h(y)}{|x-y|^{\mu}}dxdy\leq C(t,N,\alpha ,r)\Vert f\Vert _{t}\Vert h\Vert _{r}.
		\end{equation}
		If $t=r=\frac{2N}{2N-\mu}$, then
		\begin{equation*}
			C(t,N,\mu,r)=C(N,\mu)=\pi^{\frac{\mu}{2}}\frac{\Gamma(\frac{N}{2}-\frac{\mu}{2})}{\Gamma(N-\frac{\mu}{2})}
			\{\frac{\Gamma(\frac{N}{2})}{\Gamma(N)}\}^{-1+\frac{\mu}{N}}.
		\end{equation*}
		In this case, the equality in (\ref{A4}) holds if and only if $f\equiv Ch$ and
		\begin{equation*}
			h(x)=A\left(\gamma^{2}+|x-a|^{2}\right)^{-(2N-\mu)/2}
		\end{equation*}
		for some $A\in\mathbb{C}$, $0\neq\gamma\in\mathbb{R}$ and $a\in\mathbb{R}^{N}$.
	\end{lem}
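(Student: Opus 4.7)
This is the classical Hardy--Littlewood--Sobolev inequality quoted from Lieb and Loss, so any proof I would propose follows the standard two-stage route: first establish boundedness with some constant, then identify the sharp constant and the extremizers in the diagonal case.

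For the qualitative bound I would rewrite the left-hand side as $\int h(y)(I_{\mu}f)(y)\,dy$, where $I_{\mu}f(y):=\int |x-y|^{-\mu}f(x)\,dx$ is (up to a multiplicative constant) a Riesz potential. By H\"older duality the inequality is equivalent to the mapping property $I_{\mu}\colon L^{t}(\R^{N})\to L^{r'}(\R^{N})$ with $1/r'=1/t+\mu/N-1$, a relation forced by the scaling hypothesis $1/t+\mu/N+1/r=2$. The clean route is to prove the weak-type $(t,r')$ endpoint by splitting the kernel at radius $R$, bounding the near part pointwise by the weak $L^{N/\mu}$ norm of $|x|^{-\mu}$ and the far part via H\"older in $L^{t}$, and then optimizing in $R$. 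Marcinkiewicz real interpolation between two such endpoint pairs then upgrades this to the strong bound on the open range of admissible exponents.

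For the diagonal case $t=r=2N/(2N-\mu)$ I would first reduce to $f,h\geq 0$ and invoke the Riesz rearrangement inequality to conclude that any maximizer is symmetric decreasing, up to translation. The central step is to exploit conformal invariance: stereographic projection transforms the functional into an $\mathrm{SO}(N+1)$-invariant double integral on $S^{N}$, where constants are extremal by symmetry. Pulling back yields exactly the claimed family $(\gamma^{2}+|x-a|^{2})^{-(2N-\mu)/2}$, and the sharp constant $C(N,\mu)$ emerges by evaluating the double integral on a canonical representative using Beta-function identities.

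The main obstacle is the uniqueness of extremizers modulo translations, dilations, and the conformal group. The classical device, due to Lieb, is the method of competing symmetries: alternating Schwarz symmetrization with inversion in a sphere forces an extremizing sequence to converge to a fixed point common to both operations, which is precisely a conformal factor of the stated form. Because this argument is technical and already beautifully presented in Lieb--Loss, it is natural for the authors simply to cite the result and move on to its applications.
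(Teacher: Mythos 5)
Your proposal is consistent with the paper's treatment: the authors give no proof of this lemma at all, quoting it directly from Lieb--Loss, and your outline (weak-type estimate plus Marcinkiewicz interpolation for the general bound; Riesz rearrangement, stereographic projection to the sphere, and Lieb's competing-symmetries argument for the sharp constant and the classification of extremizers in the diagonal case $t=r=\frac{2N}{2N-\mu}$) is an accurate summary of the classical argument being cited. Nothing further is needed, since citing the result is exactly what the paper does.
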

	
	From Lemma \ref{L-HLS} and using the Sobolev embedding, one has  for all $u\in D^{1,2}(\mathbb{R}^{N})$
	\begin{equation*}
		{\left(\int_{\mathbb{R}^{N}}\int_{\mathbb{R}^{N}}\frac{|u(x)|^{2^{\ast}_{\mu}}|u(y)|^{2^{\ast}_{\mu}}}
			{|x-y|^{\mu}}dxdy\right)^{\frac{N-2}{2N-\mu}}}
		\leq C(N,\mu)^{\frac{N-2}{2N-\mu}}\Vert u\Vert^{2}_{2^{*}},
	\end{equation*}
	where $C(N,\mu)$ is defined in Lemma \ref{L-HLS} and $2_{\mu}^{\ast}=\frac{2N-\mu}{N-2}$ is called the upper Hardy-Littlewood-Sobolev critical exponent.
	Denoting
	\begin{equation}\label{SHL}
		S_{H,L}:=\inf_{u\in D^{1,2}(\mathbb{R}^{N})\setminus\{0\}\}}\frac{\int_{\mathbb{R}^{N}}|\nabla u|^{2}dx}
		{\left(\int_{\mathbb{R}^{N}}\int_{\mathbb{R}^{N}}\frac{|u(x)|^{2^{\ast}_{\mu}}|u(y)|^{2^{\ast}_{\mu}}}
			{|x-y|^{\mu}}dxdy\right)^{\frac{N-2}{2N-\mu}}},
	\end{equation}
	we know from \cite[Lemma 1.2]{Gao-Yang} that the constant $S_{H,L}$ is achieved if and only if
	\begin{equation*}
		U=C\left(\frac{b}{b^{2}+|x-a|^{2}}\right)^{\frac{N-2}{2}},
	\end{equation*}
	where $C>0$ is a fixed constant, $a\in\mathbb{R}^{N}$ and $b\in(0,\infty)$ are parameters. Therefore,
	\begin{equation*}
		S_{H,L}=\frac{S}{C(N,\mu)^{\frac{N-2}{2N-\mu}}},
	\end{equation*}
	where $S$ is the best constant in the sense of Sobolev inequality. Moreover, for any open subset $\Omega\subset\mathbb{R}^{N}$, it is known that
	\begin{equation*}
		S_{H,L}(\Omega):=\inf_{u\in D^{1,2}_{0}(\Omega)\setminus\{0\}\}}\frac{\int_{\Omega}|\nabla u|^{2}dx}
		{\left(\int_{\Omega}\int_{\Omega}\frac{|u(x)|^{2^{\ast}_{\mu}}|u(y)|^{2^{\ast}_{\mu}}}
			{|x-y|^{\mu}}dxdy\right)^{\frac{N-2}{2N-\mu}}}=S_{H,L}
	\end{equation*}
	and $S_{H,L}(\Omega)$ is never achieved except when $\Omega=\mathbb{R}^{N}$. Following \cite{Sattinger,Payne-Sattinger}, we define the stable and unstable sets as
		\begin{equation}\label{W}
			W_{\mu}=\left\{u\in H^{1}_{0}(\Omega)\ |\ J_{\mu}(u)<m_{\mu},\ I_{\mu}(u)>0\right\}\cup\left\lbrace 0\right\rbrace
		\end{equation}
		and
		\begin{equation}\label{V}
			V_{\mu}=\left\lbrace u\in H^{1}_{0}(\Omega)\ |\ J_{\mu}(u)<m_{\mu},\ I_{\mu}(u)<0\right\rbrace
		\end{equation}
		respectively, where $I_{\mu}$ is the associated Nehari functional for problem \eqref{E} defined for any $u\in H^1_0(\Omega)$ by
\begin{equation*}
\displaystyle I_\mu(u):=\int_\Omega|\nabla u|^2dx-\int_\Omega\int_\Omega\frac{|u(x)|^{2^*_\mu}|u(y)|^{2^*_\mu}}{|x-y|^\mu}dxdy
\end{equation*}
and the associated critical level $m_\mu:=\frac{N-\mu+2}{2(2N-\mu)}S_{H,L}^{\frac{2N-\mu}{N-\mu+2}}$.

We first state below the notion of a (weak) solution to \eqref{E} on time-interval $[0,T)$ (with $0<T\leq\infty$) with initial data $u_0\in H^1_0(\Omega)$:
\begin{Def}\label{D-WS}
	Let $u_0\in H^1_0(\Omega)$. We call the function $u\in C([0,T); H^1_0(\Omega))$, satisfying $u(0)=u_0$, $u_t\in L^2((0,T)\times\Omega)$ and verifying the equation \eqref{E} in sense of distributions, a solution of \eqref{E}. The solution $u$ is said to be maximal if there is no solution extending $u$ outside the time-interval $(0,T)$. Such $T$ is denoted by $T_{\max}$. In addition, $u$ is said to be global if $T_{\max}=\infty$.
\end{Def}
We additionally give the notion of classical solution of \eqref{E}:
\begin{Def}
	Let $u_{0}\in H^{1}_{0}(\Omega)$ and $T\in (0,\infty]$, we say that  a weak solution $u\in C([0,T); H^1_0(\Omega))$ to \eqref{E} with initial data $u_0$ is a classical solution of \eqref{E} in $[0,T)$ if $u\in C^{2,1}(\Omega\times (0,T))\cap C(\bar{\Omega}\times (0,T))$  and $u$ satisfies \eqref{E} for $t\in (0,T)$ pointwisely.
\end{Def}
	We are now ready to state the main results. First, we give the existence of weak (maximal) solutions of \eqref{E}. 
	\begin{thm}\label{thm-existence}
				Suppose that $N\geq3$ and $0<\mu<\min\left\lbrace N,4\right\rbrace $. In addition, assume $\theta>0$ large enough such that $\alpha=\frac{1}{2}+\frac{N-2}{2^{\theta}}$ and $\beta=\frac{2^{\theta-1}(2N-\mu)-(N-2)(2N-\mu)}{2^{\theta}(N-\mu+2)}$ belong to $(\frac{1}{2},1)$. Then for each $u_{0}\in H^{1}_{0}(\Omega)$, there exists $T=T(\Vert u_{0}\Vert_{H^{1}_{0}})>0$ such that $u$ is a classical solution of problem \eqref{E} in $(0,T]$. Moreover wa have :
				\begin{equation}\label{bbb5}
					\Vert A^{\gamma}u(t)\Vert_{2}\leq\Vert Au(t)\Vert_{2}\leq Ct^{-\frac{1}{2}}\Vert u_{0}\Vert_{H^{1}_{0}}\ \text{for}\ t\in(0,T]
				\end{equation}
				for $\gamma=\alpha,\,\beta$ respectively, and
				\begin{equation}\label{bbb6}
					\Vert A^{\gamma}u_{t}(t)\Vert_{2}\leq Ct^{-(\gamma+\frac{1}{2})}\Vert u_{0}\Vert_{_{H^{1}_{0}}}\ \text{for}\ t\in(0,T]
				\end{equation}
				for any $\gamma\in[0,\min\{1-\alpha,1-\beta\})$. Furthermore, there exists $T_{\max}=T_{\max}(\|u_0\|_{H^1_0(\Omega)})\in (T,+\infty]$ such that $u$ can be extended as a maximal solution to \eqref{E} on the time interval $(0,T_{\max})$.
		\end{thm}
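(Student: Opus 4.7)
The plan is a Banach fixed-point argument on the mild formulation \eqref{PI}. Writing $A:=-\Delta$ for the Dirichlet Laplacian on $L^{2}(\Omega)$ (a positive self-adjoint operator) and
\[
F(u)(x):=\Bigl(\int_{\Omega}\frac{|u(y)|^{2^{\ast}_{\mu}}}{|x-y|^{\mu}}\,dy\Bigr)|u(x)|^{2^{\ast}_{\mu}-2}u(x),
\]
the integral equation \eqref{PI} becomes $u(t)=\Phi(u)(t):=e^{-tA}u_{0}+\int_{0}^{t}e^{-(t-s)A}F(u(s))\,ds$. I would look for a fixed point of $\Phi$ in a closed ball of the time-weighted space
\[
X_{T}:=\Bigl\{u\in C((0,T];D(A^{\alpha}))\cap C((0,T];D(A^{\beta})):\ \|u\|_{X_{T}}<\infty\Bigr\},
\]
endowed with $\|u\|_{X_{T}}:=\sup_{0<t\leq T}\bigl(t^{\alpha-\frac{1}{2}}\|A^{\alpha}u(t)\|_{2}+t^{\beta-\frac{1}{2}}\|A^{\beta}u(t)\|_{2}\bigr)$. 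The weights $t^{\alpha-1/2}$, $t^{\beta-1/2}$ are chosen exactly so that for $u_{0}\in D(A^{1/2})=H^{1}_{0}(\Omega)$ the linear piece $e^{-tA}u_{0}$ has finite $X_{T}$ norm, by the analytic semigroup smoothing $\|A^{\gamma}e^{-tA}v\|_{2}\leq Ct^{-(\gamma-\sigma)}\|A^{\sigma}v\|_{2}$ ($\gamma\geq\sigma\geq 0$) applied with $\sigma=1/2$.

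The heart of the argument is a nonlinear estimate of the form
\[
\|A^{-\delta}F(u)\|_{2}\leq C\,\|A^{\alpha}u\|_{2}^{a}\,\|A^{\beta}u\|_{2}^{b}
\]
for some $\delta\in[0,1-\alpha)$ and exponents $a,b\geq 0$ with $a+b=2\cdot 2^{\ast}_{\mu}-1$. I would obtain it by applying Lemma \ref{L-HLS} to the HLS-factor at the critical exponent $\frac{2N}{2N-\mu}$ in each of the two convolution slots, then invoking the Sobolev embeddings $D(A^{\alpha})\hookrightarrow L^{p(\alpha)}(\Omega)$ and $D(A^{\beta})\hookrightarrow L^{p(\beta)}(\Omega)$ for appropriate $p(\alpha),p(\beta)$, and closing up with H\"older on the pointwise factor $|u|^{2^{\ast}_{\mu}-1}$. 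The precise formulas $\alpha=\frac{1}{2}+\frac{N-2}{2^{\theta}}$ and $\beta=\frac{2^{\theta-1}(2N-\mu)-(N-2)(2N-\mu)}{2^{\theta}(N-\mu+2)}$ arise from matching these three chains of exponents, and the hypothesis $0<\mu<\min\{N,4\}$ is precisely what guarantees, for $\theta$ sufficiently large, $\alpha,\beta\in(1/2,1)$ together with the remaining admissibility conditions needed below.

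Given this estimate, the smoothing bound $\|A^{\gamma}e^{-(t-s)A}g\|_{2}\leq C(t-s)^{-(\gamma+\delta)}\|A^{-\delta}g\|_{2}$ applied to $g=F(u(s))$, combined with the $X_{T}$ control $\|A^{\alpha}u(s)\|_{2}\leq s^{-(\alpha-1/2)}\|u\|_{X_{T}}$ (and likewise for $\beta$), turns the Duhamel integrand into a Beta-type integral in $s$ whose convergence and residual $T^{\kappa}$-factor (for some $\kappa>0$) are checked by a direct computation, yielding
\[
\|\Phi(u)\|_{X_{T}}\leq C\|u_{0}\|_{H^{1}_{0}}+CT^{\kappa}\|u\|_{X_{T}}^{2\cdot 2^{\ast}_{\mu}-1}.
\]
Choosing $R:=2C\|u_{0}\|_{H^{1}_{0}}$ and $T=T(R)>0$ sufficiently small, $\Phi$ sends the closed ball $\overline{B_{R}}\subset X_{T}$ into itself, and a parallel estimate for $F(u)-F(v)$ (using the pointwise Lipschitz structure of $F$) yields a strict contraction. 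Banach's theorem then produces a unique mild solution $u\in\overline{B_{R}}$. Bound \eqref{bbb5} at $\gamma=1$ follows by running the same Duhamel estimate with $\gamma=1$, and for $\gamma=\alpha,\beta$ it is either the definition of $\|\cdot\|_{X_{T}}$ or spectral comparison; \eqref{bbb6} is obtained by differentiating the Duhamel identity and applying these bounds to $u_{t}(t)=-Ae^{-tA}u_{0}+F(u(t))+\int_{0}^{t}\partial_{t}e^{-(t-s)A}F(u(s))\,ds$. Standard parabolic bootstrap (Schauder theory, once $u(t)\in D(A)$ and $u_{t}(t)\in L^{2}$ for $t>0$) upgrades $u$ to a classical solution in $C^{2,1}(\Omega\times(0,T])\cap C(\overline{\Omega}\times(0,T])$; the maximal solution is built by restarting at any $t_{0}\in(0,T)$ with datum $u(t_{0})\in H^{1}_{0}$ and iterating, and $T_{\max}\in(T,+\infty]$ is defined as the supremum of existence times.

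The decisive obstacle is the nonlinear estimate in the second paragraph: one has to pair two Sobolev embeddings of $D(A^{\alpha})$ and $D(A^{\beta})$ with the Hardy-Littlewood-Sobolev inequality so that (i) the HLS conjugacy $\frac{1}{t}+\frac{\mu}{N}+\frac{1}{r}=2$ is met, (ii) the realised Sobolev exponents are admissible in dimension $N\geq3$, and (iii) the time exponents arising from the semigroup smoothing keep the Duhamel Beta-integral convergent and produce a positive $\kappa$. The specific values of $\alpha,\beta$ in the statement, together with the restriction $\mu<4$, are forced precisely by (iii); the remaining ingredients (Banach contraction, parabolic bootstrap, maximal continuation) are then standard.
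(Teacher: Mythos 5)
Your overall architecture (fixed point for the mild formulation in the weighted space with norms $t^{\alpha-1/2}\Vert A^{\alpha}u(t)\Vert_{2}$, $t^{\beta-1/2}\Vert A^{\beta}u(t)\Vert_{2}$, nonlinear estimate via HLS plus Sobolev embeddings of $D(A^{\alpha}),D(A^{\beta})$, then bootstrap and maximal continuation) is the same as the paper's. But the quantitative core of your contraction step is wrong. With the stated values of $\alpha,\beta$ one checks that
\begin{equation*}
2^{\ast}_{\mu}\bigl(\alpha-\tfrac{1}{2}\bigr)+(2^{\ast}_{\mu}-1)\bigl(\beta-\tfrac{1}{2}\bigr)=\tfrac{1}{2},
\end{equation*}
which is exactly the identity \eqref{key} in the paper; consequently the Duhamel/Beta computation produces the time exponent $t^{\frac{1}{2}-2^{\ast}_{\mu}(\alpha-\frac12)-(2^{\ast}_{\mu}-1)(\beta-\frac12)}=t^{0}$, i.e.\ $\kappa=0$, not $\kappa>0$. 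This is not an accident of the bookkeeping: the weighted norms are scale-invariant for the parabolic scaling \eqref{Scaling} and the nonlinearity is HLS-critical, so no estimate in these norms can produce a positive power of $T$ (allowing $\delta>0$ in $\Vert A^{-\delta}F(u)\Vert_{2}$ only shifts weight between $\delta$ and the Sobolev indices along the same scaling line). Hence your claimed bound $\Vert\Phi(u)\Vert_{X_{T}}\leq C\Vert u_{0}\Vert_{H^{1}_{0}}+CT^{\kappa}\Vert u\Vert_{X_{T}}^{2\cdot2^{\ast}_{\mu}-1}$ with $\kappa>0$ is unattainable, and with $\kappa=0$ your choice $R=2C\Vert u_{0}\Vert_{H^{1}_{0}}$ fails: the self-map inequality $C\Vert u_{0}\Vert+CR^{2\cdot2^{\ast}_{\mu}-1}\leq R$ is violated for large data, and the Lipschitz constant $CR^{2(2^{\ast}_{\mu}-1)}$ does not shrink as $T\to0$. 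The correct mechanism, which the paper uses, is the opposite one: fix the ball radius $K$ small in an absolute sense (so that $CK^{2\cdot2^{\ast}_{\mu}-1}<K$ and $C(2K)^{2(2^{\ast}_{\mu}-1)}\leq\frac12$), and gain smallness of the \emph{linear} part by the vanishing $t^{\gamma-\frac12}\Vert A^{\gamma}e^{-tA}u_{0}\Vert_{2}=t^{\gamma-\frac12}\Vert A^{\gamma-\frac12}e^{-tA}A^{\frac12}u_{0}\Vert_{2}\to0$ as $t\to0^{+}$, which is precisely why $\alpha,\beta>\frac12$ and $u_{0}\in H^{1}_{0}(\Omega)=D(A^{1/2})$ are needed. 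Your sentence ``the specific values of $\alpha,\beta$ are forced by producing a positive $\kappa$'' therefore misidentifies the role of these exponents: they are chosen to make the exponent exactly zero, and smallness must come from $K$ and from the free evolution, not from $T^{\kappa}$.

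A secondary gap: deriving \eqref{bbb5} by ``running the same Duhamel estimate with $\gamma=1$'' does not work, since the kernel $(t-\tau)^{-1}$ is not integrable up to $\tau=t$. The paper avoids this by the splitting $D_{1}+D_{2}+D_{3}$ in \eqref{bbb1}: the far part $\int_{0}^{t/2}$, the near part estimated through the H\"older continuity in time of $f(u(\cdot))$ (established first via \eqref{cccc}), and the exact identity giving $[I-e^{-\frac{t}{2}A}]f(u(t))$. Your sketch needs this (or an equivalent regularization) to make the $\gamma=1$ bound, and hence the classical regularity and \eqref{bbb6}, legitimate.
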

	
		Next, we deal with the existence of potential well structure and prove the blow-up in finite time of the solutions.
	\begin{thm}\label{T-Existence potential well}
		For any $0\leq\phi\in L^{\infty}\cap H^{1}_{0}(\Omega)$ and nontrivial, let $u\,:\, [0,T_{\max})\ni t\to H^1_0(\Omega)$ be a (maximal) solution to \eqref{E} with initial value $u_{0}=\lambda\phi$ for $\lambda>0$. Then there exists $0<\lambda_{1}\leq \lambda_{2}<\infty$ such that
		\begin{item}
			$(i)$ if $\lambda\in(0,\lambda_{1})$, then there exists $t_{0}\in[0,T_{\max})$ such that $u(t_{0})\in W_{\mu}$,\\
			$(ii)$ if $\lambda\in(\lambda_{2},\infty)$, then there exists $t_{0}\in[0,T_{\max})$ such that $u(t_{0})\in V_{\mu}$,\\
			$(iii)$ if $\lambda\in(\lambda_{1},\lambda_{2})$, then the orbit $\{u(t), 0\leq t< T_{\max} \}$ does not intersect with $W_{\mu}\cup V_{\mu}$.
		\end{item}
	\end{thm}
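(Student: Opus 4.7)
The plan is to reduce the dynamical statement to the structure of two scalar sets
\[
\ca:=\{\lambda>0:\,u_\lambda(t_0)\in W_\mu \text{ for some } t_0\in[0,T_{\max}(\lambda))\},
\]
\[
\cb:=\{\lambda>0:\,u_\lambda(t_0)\in V_\mu \text{ for some } t_0\in[0,T_{\max}(\lambda))\},
\]
where $u_\lambda$ denotes the maximal solution to \eqref{E} with initial datum $\lambda\phi$; the goal is to establish $\ca=(0,\lambda_1)$ and $\cb=(\lambda_2,\infty)$ for some $0<\lambda_1\le\lambda_2<\infty$, from which (i), (ii), (iii) follow immediately. To locate the natural endpoints I first analyze the fiber map $j(\lambda):=J_\mu(\lambda\phi)$. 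Setting $P(\phi):=\int_\Omega\!\int_\Omega|\phi(x)|^{2^{\ast}_{\mu}}|\phi(y)|^{2^{\ast}_{\mu}}|x-y|^{-\mu}\,dx\,dy>0$, one has $j(\lambda)=\tfrac{\lambda^{2}}{2}\|\nabla\phi\|_{2}^{2}-\tfrac{\lambda^{2\cdot 2^{\ast}_{\mu}}}{2\cdot 2^{\ast}_{\mu}}P(\phi)$ and $I_\mu(\lambda\phi)=\lambda\,j'(\lambda)$, so $j$ is single-humped with a unique maximum at some $\lambda^{*}>0$ characterized by $I_\mu(\lambda^{*}\phi)=0$. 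The HLS inequality together with the non-attainment of $S_{H,L}$ on the bounded domain $\Omega$ gives $j(\lambda^{*})>m_\mu$ strictly, so there exist exactly two values $0<\lambda_{*}^{-}<\lambda^{*}<\lambda_{*}^{+}<\infty$ with $j(\lambda_{*}^{\pm})=m_\mu$, and already at $t_0=0$ one has $(0,\lambda_{*}^{-}]\subset\ca$ and $[\lambda_{*}^{+},\infty)\subset\cb$.

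I next show that $\ca$ and $\cb$ are open in $(0,\infty)$ and disjoint. Openness follows from continuous dependence in $H^{1}_{0}(\Omega)$ (Theorem~\ref{thm-existence}, applied up to any time slightly beyond the entry time $t_0$, using the lower semicontinuity of $T_{\max}$ in the initial datum) combined with the openness of $W_\mu$ and $V_\mu$ in $H^{1}_{0}(\Omega)$. Disjointness rests on the forward invariance of $W_\mu$ and $V_\mu$ under the flow, itself a consequence of the energy identity $\frac{d}{dt}J_\mu(u(t))=-\|u_{t}\|_{2}^{2}\le 0$ together with the fact that $m_\mu$ is the Nehari ground-state level, so that the Nehari manifold cannot be crossed from the side $\{J_\mu<m_\mu\}$.

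The decisive step is to prove that $\ca$ is downward-closed and $\cb$ is upward-closed via the comparison principle: since $\phi\ge 0$ and the nonlinearity $v\mapsto (|v|^{2^{\ast}_{\mu}}*|x|^{-\mu})|v|^{2^{\ast}_{\mu}-2}v$ is monotone on nonnegative functions, one has $u_{\lambda'}\le u_\lambda$ pointwise on the common existence interval for $0<\lambda'<\lambda$. If $\lambda\in\ca$, invariance and the $W_\mu$-decay result force $u_\lambda$ to be global with $u_\lambda(t)\to 0$ in $L^{\infty}$; comparison then makes $u_{\lambda'}$ globally $L^{\infty}$-bounded and forces $u_{\lambda'}(t)\to 0$ in $L^{\infty}$, so parabolic smoothing drives $\|u_{\lambda'}(t)\|_{H^{1}_{0}}\to 0$ and $u_{\lambda'}(t)\in W_\mu$ for large $t$, i.e., $\lambda'\in\ca$. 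If $\lambda\in\cb$, invariance yields finite-time blow-up of $u_\lambda$, comparison gives $T_{\max}(\lambda')\le T_{\max}(\lambda)<\infty$ for $\lambda'>\lambda$, and the identity $J_\mu(u)=\tfrac{N-\mu+2}{2(2N-\mu)}\|\nabla u\|_{2}^{2}+\tfrac{1}{2\cdot 2^{\ast}_{\mu}}I_\mu(u)$ together with the monotonicity of $J_\mu$ forces $J_\mu(u_{\lambda'}(t))<m_\mu$ at some $t<T_{\max}(\lambda')$ (otherwise $\|\nabla u_{\lambda'}\|_{2}$ would stay bounded, contradicting blow-up); at such a time forward invariance of $W_\mu$ rules out $u_{\lambda'}(t)\in W_\mu$, so $u_{\lambda'}(t)\in V_\mu$ and $\lambda'\in\cb$.

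Setting $\lambda_{1}:=\sup\ca$ and $\lambda_{2}:=\inf\cb$, the closedness properties give $\ca=(0,\lambda_{1})$ and $\cb=(\lambda_{2},\infty)$, while $\ca\cap\cb=\emptyset$ yields $\lambda_{1}\le\lambda_{2}$; assertions (i)--(iii) then follow directly. The hard part is the upward closedness of $\cb$: in the critical HLS regime one must rule out finite-time blow-up with $J_\mu$ remaining $\ge m_\mu$, which is precisely where the decomposition of $J_\mu$ above and a sharp use of HLS to control $P(u)/\|\nabla u\|_{2}^{2}$ as $\|\nabla u\|_{2}\to\infty$ become essential.
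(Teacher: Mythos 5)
Your overall scheme is the same as the paper's: you partition $(0,\infty)$ into the parameter sets where the orbit meets $W_\mu$, meets $V_\mu$, or meets neither (the paper's $\Lambda_s,\Lambda_{us},\Lambda_c$), prove nonemptiness by looking at $\lambda\phi$ for $\lambda$ small/large, get openness from continuous dependence plus openness of $W_\mu,V_\mu$, disjointness from the invariance of the two sets, and then try to obtain the ordered-interval structure from the comparison principle for nonnegative data. Up to that point your write-up matches the paper (which is itself terse, invoking the comparison principle of Li--Liu together with the global existence/blow-up theorems of Zhang--R\u adulescu--Yang and Proposition \ref{P-Invariant set} to conclude that $\Lambda_s$ and $\Lambda_{us}$ are ordered intervals).

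The genuine gap is in your proof that $\cb$ is upward closed. From comparison you correctly get $T_{\max}(\lambda')\le T_{\max}(\lambda)<\infty$ for $\lambda'>\lambda\in\cb$, but you then claim that if $J_\mu(u_{\lambda'}(t))\ge m_\mu$ for all $t<T_{\max}(\lambda')$ then ``$\Vert\nabla u_{\lambda'}\Vert_2$ would stay bounded,'' citing the identity
\begin{equation*}
J_\mu(u)=\frac{N-\mu+2}{2(2N-\mu)}\,A(u)+\frac{1}{2\cdot 2^{\ast}_{\mu}}\,I_\mu(u).
\end{equation*}
This identity bounds $A(u)$ by $J_\mu(u)$ only when $I_\mu(u)\ge 0$; near a finite-time blow-up one has $A(u(t))\to\infty$ and hence $I_\mu(u(t))=-(2^{\ast}_{\mu}-1)A(u(t))+2\cdot 2^{\ast}_{\mu}J_\mu(u(t))\to-\infty$, so $A$ can be arbitrarily large while $J_\mu$ remains trapped in $[m_\mu,J_\mu(u_0)]$ (the energy identity only forces $\int_0^{T_{\max}}\Vert u_t\Vert_2^2\le J_\mu(u_0)-m_\mu$, not a drop below $m_\mu$). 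So the contradiction you assert does not follow, and what you would really need is exactly the statement that a finite-time blow-up solution must see its energy fall below the critical level $m_\mu$ (so that, having $I_\mu<0$ there, it lies in $V_\mu$). You yourself flag this as ``the hard part,'' but the sketch given does not close it; a sharp use of HLS on $P(u)/\Vert\nabla u\Vert_2^2$ cannot help either, since that ratio is only bounded above by $S_{H,L}^{-2^{\ast}_{\mu}}\Vert\nabla u\Vert_2^{2(2^{\ast}_{\mu}-1)}$, which is useless as $\Vert\nabla u\Vert_2\to\infty$. The paper does not attempt to derive this from the energy decomposition at all: it obtains the ordered-interval property for $\Lambda_{us}$ by invoking the comparison principle together with \cite[Theorems 1.4, 1.5]{Zhang-Yang} and the invariance of $V_\mu$, i.e.\ it outsources precisely the step your argument leaves open. (A minor additional looseness, comparable to the paper's own level of detail, is your upgrade from $\Vert u_{\lambda'}(t)\Vert_\infty\to 0$ to $\Vert u_{\lambda'}(t)\Vert_{H^1_0}\to 0$ by ``parabolic smoothing''; that step is standard but should be justified, e.g.\ via the variation-of-constants estimates of Proposition \ref{P-Local existence}.)
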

	
	\begin{thm}\label{T-Blow up}
		Let  $u\,:\, [0,T_{\max})\ni t\to H^1_0(\Omega)$ be a maximal solution such that $u(t_{0})\in V_{\mu}$ for some $t_{0}\in[0,T_{\max})$. Then $T_{\max}<+\infty$,
		\begin{equation*}
			u(t)\in V_{\mu},\ \forall t\in[t_{0},T_{\max})
		\end{equation*}
		and
		\begin{equation*}
			\Vert u(t)\Vert_{\infty}, \|u(t)\|_{H^1_0(\Omega)}\rightarrow\infty\ \text{as}\ t\rightarrow T_{\max}^-.
		\end{equation*}
	\end{thm}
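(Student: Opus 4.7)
The plan is to proceed in three stages: invariance of $V_\mu$ under the flow, a Levine-type concavity argument forcing $T_{\max}<+\infty$, and divergence of $\|u(t)\|_{\infty}$ and $\|u(t)\|_{H^1_0(\Omega)}$. For the first, testing \eqref{E} against $u_t$ yields the energy identity $\frac{d}{dt}J_\mu(u(t))=-\|u_t(t)\|_2^2$, so $J_\mu(u(t))\leq J_\mu(u(t_0))<m_\mu$ on $[t_0,T_{\max})$. Suppose for contradiction that $t_1\in(t_0,T_{\max})$ is the first instant with $I_\mu(u(t_1))=0$. The Hardy--Littlewood--Sobolev inequality and \eqref{SHL} show that every $v\in H^1_0(\Omega)$ with $I_\mu(v)<0$ satisfies $\|\nabla v\|_2^2\geq S_{H,L}^{(2N-\mu)/(N-\mu+2)}$, and by the $H^1_0$-continuity of $t\mapsto u(t)$ this bound persists at $t_1$. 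Hence $u(t_1)\neq 0$ lies on the Nehari manifold of $J_\mu$, so $J_\mu(u(t_1))\geq m_\mu$, contradicting the energy inequality. Therefore $u(t)\in V_\mu$ for every $t\in[t_0,T_{\max})$.

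For the finite-time blow-up, I follow the Levine--Payne--Sattinger concavity scheme. Set
\begin{equation*}
M(t):=\int_{t_0}^{t}\|u(s)\|_2^2\,ds+(T_1-t)\|u(t_0)\|_2^2+\beta(t+\tau)^2,
\end{equation*}
with positive constants $T_1,\beta,\tau$ to be tuned. A direct computation gives $M''(t)=-2I_\mu(u(t))+2\beta$ and expresses $M'(t)$ as $2\int_{t_0}^{t}\int_\Omega u\,u_s\,dx\,ds+2\beta(t+\tau)$. Using the algebraic identity $-2I_\mu(u)=2(2^{\ast}_{\mu}-1)\|\nabla u\|_2^2-4\cdot 2^{\ast}_{\mu} J_\mu(u)$, the energy relation $J_\mu(u(t))=J_\mu(u(t_0))-\int_{t_0}^{t}\|u_s\|_2^2\,ds$, and the Cauchy--Schwarz bound on the time integral inside $M'(t)$, I will establish $M(t)M''(t)\geq(1+\eta)(M'(t))^{2}$ for a suitable $\eta>0$. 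The key input is that the strict gap $m_\mu-J_\mu(u(t_0))>0$, combined with the invariant lower bound $\|\nabla u(t)\|_2^2\geq S_{H,L}^{(2N-\mu)/(N-\mu+2)}$ from the previous step, upgrades $I_\mu(u(t))<0$ into a uniform positive lower bound $-I_\mu(u(t))\geq c_0>0$ on $[t_0,T_{\max})$. The concavity inequality then forces $M^{-\eta}$ to be concave and decreasing, hence $M$, and a fortiori $\|u(t)\|_2^2$, must blow up in finite time.

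For the final stage, since $\Omega$ is bounded one has $\|u(t)\|_2\leq|\Omega|^{1/2}\|u(t)\|_\infty$, so $\|u(t)\|_2^2\to+\infty$ forces $\|u(t)\|_\infty\to\infty$ as $t\to T_{\max}^{-}$. If $\|u(t)\|_{H^1_0(\Omega)}$ stayed bounded on $[t_0,T_{\max})$, applying Theorem \ref{thm-existence} at times arbitrarily close to $T_{\max}$, with an existence time depending only on the $H^1_0$ norm of the new initial datum, would extend $u$ past $T_{\max}$, contradicting maximality. Hence $\|u(t)\|_{H^1_0(\Omega)}\to\infty$ as well.

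The main obstacle is the concavity inequality at the critical Hardy--Littlewood--Sobolev exponent, where the margin is tight: the auxiliary parameters $T_1,\beta,\tau$ must be selected so that the energy-dissipation term $\int_{t_0}^{t}\|u_s\|_2^2\,ds$ dominates the Cauchy--Schwarz cross term uniformly in $t$, which is only possible thanks to both $J_\mu(u(t_0))<m_\mu$ and the $V_\mu$-invariant lower bound on $\|\nabla u\|_2^2$. A secondary subtlety is the invariance argument itself, where the loss of compactness at $2^{\ast}_{\mu}$ rules out standard Palais--Smale reasoning and forces one to rely purely on strong $H^1_0$-continuity of the orbit together with the structural gap at the Nehari manifold.
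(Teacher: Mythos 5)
Your stage 1 (invariance of $V_{\mu}$) is correct and is essentially the paper's Proposition \ref{P-Invariant set}, with the useful extra care that the uniform bound $\|\nabla u(t)\|_2^2\geq S_{H,L}^{\frac{2N-\mu}{N-\mu+2}}$ excludes $u(t_1)=0$ at a first crossing of the Nehari manifold. Your stage 2 is a genuinely different route from the paper: the paper splits off the case $J_{\mu}(u(t))<0$ (quoting the classical concavity result) and, when $J_{\mu}\geq 0$, runs a first-order differential inequality on $\frac{d}{dt}\|u(t)\|_2^2$ built from $\sigma=1-J_{\mu}(u(t_0))/m_{\mu}$ and the bound $B(u(t))>A(u(t))\geq\frac{2\cdot 2^{\ast}_{\mu}}{2^{\ast}_{\mu}-1}m_{\mu}$, whereas you run the full Levine concavity scheme with $M(t)$. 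Your mechanism is sound: $-2I_{\mu}(u)=2(2^{\ast}_{\mu}-1)A(u)-4\cdot 2^{\ast}_{\mu}J_{\mu}(u)$ together with the invariant lower bound on $A(u(t))$ and $J_{\mu}(u(t))\leq J_{\mu}(u(t_0))<m_{\mu}$ gives the uniform gap $4\cdot 2^{\ast}_{\mu}(m_{\mu}-J_{\mu}(u(t_0)))>0$, and with $\eta=2^{\ast}_{\mu}-1$ the dissipation terms cancel exactly in $MM''-(1+\eta)(M')^2$, so the parameter tuning ($\beta$ small, then $\tau$, $T_1$ large) is routine. This buys a self-contained, unified treatment of both energy regimes, at the price of more bookkeeping than the paper's two-line differential inequality.

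The genuine gap is in stage 3, in the deduction that $\Vert u(t)\Vert_{\infty}\rightarrow\infty$ as $t\rightarrow T_{\max}^-$. Your concavity argument is run purely for contradiction under the hypothesis $T_{\max}=+\infty$; once you conclude $T_{\max}<+\infty$, it gives no information about the behaviour of $\|u(t)\|_2$ on $[t_0,T_{\max})$, so the premise ``$\|u(t)\|_2^2\to+\infty$'' is never established. It cannot be taken for granted either: $L^2$ is subcritical for this scaling, and blow-up of the $L^\infty$ and $H^1_0$ norms at $T_{\max}$ does not force $L^2$ blow-up; moreover even divergence of $\int_{t_0}^{T_{\max}}\|u(s)\|_2^2\,ds$ would only yield a limsup, not the limit claimed in the theorem. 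The same limsup/lim issue affects your $H^1_0$ argument, since ruling out ``$\|u(t)\|_{H^1_0}$ stays bounded on $[t_0,T_{\max})$'' does not preclude a sequence $t_n\to T_{\max}^-$ along which it is bounded. The correct repair is the paper's Proposition \ref{P-Finite time}: if $\sup_n\|u(t_n)\|_\infty<\infty$ along some $t_n\to T_{\max}^-$, then the monotone energy together with the Hardy--Littlewood--Sobolev inequality (via $A(u)=2J_{\mu}(u)+\frac{1}{2^{\ast}_{\mu}}B(u)$) gives $\sup_n\|u(t_n)\|_{H^1_0}<\infty$, and since the local existence time in Theorem \ref{thm-existence} depends only on the $H^1_0$ norm of the datum, one can restart at $t_n$ close to $T_{\max}$ and extend the solution beyond $T_{\max}$, contradicting maximality; the same sequential continuation argument gives the full limit for $\|u(t)\|_{H^1_0}$.
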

	Now, we are going to prove the global existence of solutions by using the scaling arguments. Let $u\, :\, [0,T_{\max})\times\Omega\ni (t,x)\to \mathbb R$ be a solution of problem \eqref{E} and $\lambda>0$. For any $x_{0}\in\mathbb{R}^{N}$ and $t_{0}\in\mathbb{R}^+$, define
	\begin{equation}\label{Scaling}
		v(y,s):=\lambda^{-\frac{N-2}{2}}u(x,t),
	\end{equation}
	where $y=\lambda(x-x_{0})$ and $s=\lambda^{2}(t-t_{0})$.
	Then we show that the change of variables given in (\ref{Scaling}) makes invariant for problem \eqref{E} and the energy functional $J_{\mu}$ (see Proposition \ref{P-Scaling properties} below). We state the following results about the asymptotics of the global solutions.
	
	\begin{thm}\label{T-Global existence}
		Let $u\,:\, [0,T_{\max})\ni t\to H^1_0(\Omega)$ be a solution such that $u(t_{0})\in W_{\mu}$ for some $t_{0}\in[0,T_{\max})$. Then $T_{\max}=+\infty$. Moreover,
		\begin{equation*}
			u(t)\rightarrow0\ \text{in}\ L^{r}(\Omega),\ \forall r\in[2,\infty],
		\end{equation*}
		\begin{equation*}
			u(t)\rightarrow0\ \text{in}\ H_{0}^{1}(\Omega)
		\end{equation*}
		and $J_\mu(u(t))\rightarrow0$ as $t\rightarrow+\infty$.
	\end{thm}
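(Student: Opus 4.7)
The plan is to first establish positive invariance of the stable set $W_\mu$ under the flow generated by \eqref{E}; global existence then follows from the resulting uniform $H^1_0$-bound and the local theory of Theorem \ref{thm-existence}, while all three convergence statements flow from a Lyapunov energy analysis combined with a coercivity estimate valid strictly below the threshold $S_{H,L}^{(2N-\mu)/(N-\mu+2)}$.

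The starting point is the algebraic identity
\begin{equation*}
\|\nabla u\|_2^2=\frac{2\cdot 2^*_\mu}{2^*_\mu-1}\,J_\mu(u)-\frac{1}{2^*_\mu-1}\,I_\mu(u),
\end{equation*}
so every $u\in W_\mu$ satisfies $\|\nabla u\|_2^2<\tfrac{2\cdot 2^*_\mu}{2^*_\mu-1}\,m_\mu=S_{H,L}^{(2N-\mu)/(N-\mu+2)}$. Testing \eqref{E} against $u_t$ yields the Lyapunov identity $\tfrac{d}{dt}J_\mu(u(t))=-\|u_t(t)\|_2^2$, so $J_\mu(u(t))<m_\mu$ persists on $[t_0,T_{\max})$. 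If $I_\mu(u(t))$ were to vanish first at some $t_1>t_0$ with $u(t_1)\neq 0$, the HLS--Sobolev inequality
\begin{equation*}
\int_\Omega\int_\Omega\frac{|u(x)|^{2^*_\mu}|u(y)|^{2^*_\mu}}{|x-y|^\mu}\,dx\,dy\le S_{H,L}^{-2^*_\mu}\|\nabla u\|_2^{2\cdot 2^*_\mu}
\end{equation*}
would force $\|\nabla u(t_1)\|_2^2\ge S_{H,L}^{(2N-\mu)/(N-\mu+2)}$ and hence $J_\mu(u(t_1))\ge m_\mu$, a contradiction (the degenerate case $u(t_1)=0$ is handled by uniqueness, together with $0\in W_\mu$). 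Thus $u(t)\in W_\mu$ on $[t_0,T_{\max})$, the $H^1_0$-norm remains bounded, and Theorem \ref{thm-existence} gives $T_{\max}=+\infty$.

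For the asymptotics, set $\rho:=J_\mu(u(t_0))/m_\mu\in[0,1)$; the identity above then yields the uniform coercivity $I_\mu(u(t))\ge c_*\|\nabla u(t)\|_2^2$ with $c_*:=1-\rho^{2^*_\mu-1}>0$. Testing \eqref{E} against $u$ gives $\tfrac12\tfrac{d}{dt}\|u(t)\|_2^2=-I_\mu(u(t))\le -c_*\lambda_1\|u(t)\|_2^2$ by Poincar\'e's inequality, so $\|u(t)\|_2$ decays exponentially. Integrating the Lyapunov identity gives $\int_{t_0}^{\infty}\|u_t(s)\|_2^2\,ds<\infty$, and hence a sequence $t_n\to\infty$ with $\|u_t(t_n)\|_2\to 0$. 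Applying Cauchy--Schwarz to $\langle u_t(t_n),u(t_n)\rangle=-I_\mu(u(t_n))$ yields $I_\mu(u(t_n))\to 0$, so $\|\nabla u(t_n)\|_2\to 0$ by coercivity. Monotonicity of $J_\mu$ upgrades this to $J_\mu(u(t))\to 0$ on the full family, and the algebraic identity then forces $\|\nabla u(t)\|_2\to 0$.

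Finally, for $L^r$-convergence, $r\in[2,2^*]$ is immediate by $L^2$--$L^{2^*}$ interpolation together with the Sobolev embedding. For $r\in(2^*,\infty]$, I would bootstrap via the mild formulation \eqref{PI} and the heat-kernel smoothing $\|e^{\tau\Delta}f\|_\infty\le C\tau^{-N/(2p)}\|f\|_p$: writing
\begin{equation*}
u(t+1)=e^{\Delta}u(t)+\int_0^1 e^{(1-s)\Delta}\Bigl[\bigl(|x|^{-\mu}\ast|u(t+s)|^{2^*_\mu}\bigr)|u(t+s)|^{2^*_\mu-2}u(t+s)\Bigr]\,ds
\end{equation*}
and controlling the nonlinear term in $L^p$ via HLS and Sobolev by a positive power of $\|\nabla u(t+s)\|_2$, the convergences $\|u(t)\|_2\to 0$ and $\|\nabla u(t)\|_2\to 0$ transfer into $\|u(t+1)\|_\infty\to 0$. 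The main obstacle I anticipate is making this bootstrap precise at the critical growth, where $L^r$-gains are borderline; the strict sub-threshold bound $\|\nabla u(t)\|_2^2<S_{H,L}^{(2N-\mu)/(N-\mu+2)}$ secured in the invariance step is exactly what rules out the concentration phenomena that would otherwise obstruct $L^\infty$-decay at the critical exponent.
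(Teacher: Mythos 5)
Your first two steps (invariance of $W_\mu$ via the energy identity and the fact that $I_\mu(u)=0$, $u\neq 0$ forces $J_\mu(u)\ge m_\mu$; then $T_{\max}=\infty$ from the uniform $H^1_0$-bound on $W_\mu$ together with Proposition \ref{P-Finite time}) coincide with the paper's Propositions \ref{P-Invariant set} and \ref{P-Finite time}. Your treatment of the $H^1_0$- and energy-decay is actually a genuinely different and more elementary route than the paper's: the coercivity $I_\mu(u(t))\ge\bigl(1-\rho^{2^*_\mu-1}\bigr)\|\nabla u(t)\|_2^2$ with $\rho=J_\mu(u(t_0))/m_\mu<1$ is correct (since $I_\mu>0$ and $J_\mu\le\rho m_\mu$ give $\|\nabla u\|_2^2\le\rho S_{H,L}^{(2N-\mu)/(N-\mu+2)}$, and $(2^*_\mu-1)\tfrac{2N-\mu}{N-\mu+2}=2^*_\mu$ makes the constants match), and it yields exponential $L^2$-decay, $J_\mu(u(t))\to0$ and $\|\nabla u(t)\|_2\to0$ without any compactness argument; the paper instead obtains these only after first proving $\|u(t)\|_\infty\to 0$.

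The genuine gap is precisely that last point, which is the heart of the theorem for $r\in(2^*,\infty]$: your bootstrap for $L^\infty$-decay does not close as described. Controlling the nonlinearity by $\|\nabla u\|_2$ through HLS and Sobolev places $f(u)=\bigl(|x|^{-\mu}\ast|u|^{2^*_\mu}\bigr)|u|^{2^*_\mu-2}u$ only in $L^{p}$ with $p=\tfrac{2N}{N+2}$ (the convolution lies in $L^{2N/\mu}$ and $|u|^{2^*_\mu-1}$ in $L^{2N/(N-\mu+2)}$), and $\tfrac{2N}{N+2}<\tfrac N2$ for every $N\ge3$, so the smoothing estimate produces the factor $(1-s)^{-(N+2)/4}$, which is not integrable; worse, smoothing from $L^{2N/(N+2)}$ gains integrability only up to exponents $q<2^*$, i.e.\ a norm-by-norm iteration at the critical exponent gains nothing, which is exactly the scale-invariance obstruction. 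The smallness $\|\nabla u(t)\|_2\to0$ can indeed be exploited, but not by the one-shot estimate you wrote: one must either rerun the weighted-in-time fixed point of Proposition \ref{P-Local existence} with data $u(t)$ and $K\le C\|u(t)\|_{H^1_0}$ small, obtaining decay of $\|A^{\gamma}u(t+T)\|_2$ and then iterating in fractional-power spaces (a dimension-dependent number of steps, with $H^2\hookrightarrow L^\infty$ available only for $N\le3$), or argue as the paper does: assume $\limsup_t\|u(t)\|_\infty=\delta>0$, rescale about near-maximum points, use parabolic estimates and the vanishing of $\|u_t\|_{L^2}$ on rescaled time slabs to extract a nonzero limit profile, and invoke Lemma \ref{L3.5} to conclude that such a profile costs Dirichlet energy at least $S_{H,L}^{(2N-\mu)/(N-\mu+2)}$, contradicting $\|\nabla u(t)\|_2^2\le\tfrac{2\cdot2^*_\mu}{2^*_\mu-1}J_\mu(u(t_0))<S_{H,L}^{(2N-\mu)/(N-\mu+2)}$. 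Your closing remark that the sub-threshold bound ``rules out concentration'' is the right heuristic, but it is exactly this rescaling/limit-profile argument (or an explicit smallness iteration) that is missing from the proposal, and without it the convergence in $L^r$ for $r>2^*$, in particular $r=\infty$, is not established.
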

	
	Next, we are interested in the behavior of the global solutions that intersect with neither the stable set nor the unstable set. Suppose $T_{\max}=+\infty$ and let
	\begin{equation}\label{C_{0}}
		C_{0}\equiv\frac{22^{\ast}_{\mu}}{2^{\ast}_{\mu}-1}\lim_{t\rightarrow\infty}J_{\mu}(u(t)).
	\end{equation}
	Clearly, $C_{0}\geq0$. Indeed, for a solution global in time, we must have $J(u(t))\geq0$ for all $t\geq0$ by the proof of \cite[Theorem 1.5]{Zhang-Yang} (or see \cite{Tsutsumi1}), and so $\lim_{t\rightarrow\infty}J_{\mu}(u(t))$ exists and is nonnegative. Thus, $C_{0}\geq0$. Then we state the following result of blow-up in infinite time. Here, we point out that the star-shapedness of domain $\Omega$ in the following Theorem excludes stationary nonnegative solutions of problem \eqref{E} to be nontrivial, which plays an important role for the long-time behaviour of the weak solutions.
	
	\begin{thm}\label{T-Blow up infinite}
		Let $u$ be a global solution of \eqref{E} with nonnegative initial data $u_0$ and suppose that $N=3$ and $\Omega$ is a star-shaped domain. Then, the following statements are equivalent:
		\begin{item}
			$(i)$ $C_{0}>0$,\\
			$(ii)$ $J(u(t))\geq m_{\mu}$ for all $t\in[0,+\infty)$,\\
			$(iii)$ $u(t)\not\in W_{\mu}\cup V_{\mu}$ for all $t\in[0,+\infty)$,\\
			$(iv)$ $\displaystyle\lim_{t\rightarrow\infty}\Vert u(t)\Vert_{\infty}=+\infty$.
		\end{item}
	\end{thm}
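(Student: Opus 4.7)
Set $\ell:=\lim_{t\to\infty}J_\mu(u(t))$. This limit exists in $[0,J_\mu(u_0)]$ because $\frac{d}{dt}J_\mu(u(t))=-\|u_t(t)\|_2^2\le 0$ and $J_\mu(u(t))\ge 0$ along any global solution, as recalled just above the theorem. Clearly (i) is equivalent to $\ell>0$. A crucial dichotomy used throughout is: if $v\in H^1_0(\Omega)$ satisfies $J_\mu(v)<m_\mu$, then $v\in W_\mu\cup V_\mu$. The only case to exclude is $I_\mu(v)=0$ with $v\ne 0$, but this is precluded by the characterization of $m_\mu$ as the infimum of $J_\mu$ on the Nehari manifold; the case $v=0$ is incorporated in $W_\mu$ by definition.

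The equivalences (i)$\Leftrightarrow$(ii)$\Leftrightarrow$(iii) follow from this dichotomy together with Theorems \ref{T-Blow up} and \ref{T-Global existence}. Indeed, (ii) trivially implies both (i) (so $\ell\ge m_\mu>0$) and (iii) (since $W_\mu\cup V_\mu\subset\{J_\mu<m_\mu\}$). Conversely, if $J_\mu(u(t_0))<m_\mu$ for some $t_0$, the dichotomy forces $u(t_0)\in W_\mu\cup V_\mu$: Theorem \ref{T-Blow up} rules out $V_\mu$ because the solution is global, so $u(t_0)\in W_\mu$; then Theorem \ref{T-Global existence} yields simultaneously $J_\mu(u(t))\to 0$ (hence $\ell=0$, contradicting (i)) and $\|u(t)\|_\infty\to 0$ (contradicting (iv)). This single argument delivers (i)$\Rightarrow$(ii), (iii)$\Rightarrow$(ii) and (iv)$\Rightarrow$(ii).

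The remaining, non-trivial implication is (ii)$\Rightarrow$(iv), which I prove by contraposition. Assume there exist $M>0$ and $t_n\to\infty$ with $\|u(t_n)\|_\infty\le M$. Since $\mu<N$, the convolution $K(t,x):=\int_\Omega |u(t,y)|^{2^*_\mu}|x-y|^{-\mu}\,dy$ is uniformly bounded along $t_n$; combined with $J_\mu(u(t_n))\le J_\mu(u_0)$ this yields $\sup_n\|u(t_n)\|_{H^1_0(\Omega)}<\infty$. Set $v_n(s,x):=u(t_n+s,x)$ for $s\in[-1,1]$. The solution is nonnegative (by the maximum principle and $u_0\ge 0$), and parabolic regularity for
\begin{equation*}
(v_n)_s-\Delta v_n=K_n(s,x)\,v_n^{2^*_\mu-1},\qquad v_n|_{\partial\Omega}=0,
\end{equation*}
with $K_n$ uniformly bounded, gives precompactness of $\{v_n\}$ in $C^{2,1}(\overline\Omega\times[-1/2,1/2])$. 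Along a subsequence, $v_n\to v$ in $C^{2,1}$. Because
\begin{equation*}
\int_{-1}^{1}\|(v_n)_s(s)\|_2^2\,ds=\int_{t_n-1}^{t_n+1}\|u_t(\tau)\|_2^2\,d\tau\longrightarrow 0
\end{equation*}
(as $\int_0^\infty\|u_t\|_2^2\,d\tau=J_\mu(u_0)-\ell<\infty$), we get $v_s\equiv 0$, so $v(s,\cdot)\equiv w$ for some nonnegative $w\in H^1_0(\Omega)$ solving the stationary critical Choquard equation. Testing this equation against $x\cdot\nabla w$ and exploiting the symmetry of the kernel $|x-y|^{-\mu}$ produces a Pohozaev identity in which the bulk terms cancel exactly at the critical exponent $2^*_\mu$, leaving $\int_{\partial\Omega}(x\cdot\nu)(\partial_\nu w)^2\,d\sigma=0$. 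Star-shapedness of $\Omega$ gives $x\cdot\nu\ge 0$ on $\partial\Omega$, hence $\partial_\nu w\equiv 0$; combined with $w|_{\partial\Omega}=0$, unique continuation forces $w\equiv 0$. Therefore $u(t_n)=v_n(0,\cdot)\to 0$ in $H^1_0(\Omega)$, so $J_\mu(u(t_n))\to 0$, whence $\ell=0<m_\mu$, contradicting (ii).

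\textbf{Main obstacle.} The delicate point is the strong parabolic compactness of the shifted orbit $\{v_n\}$ and the rigorous identification of its limit as a classical stationary solution despite the nonlocal reaction. This is where the restriction $N=3$ enters: it keeps the interplay between the HLS exponent, the Sobolev critical exponent and the parabolic scaling mild enough that an $L^\infty$-snapshot bound bootstraps to $C^{2,1}$-regularity without energy concentration, so that the limit equation is truly pointwise and the Pohozaev rigidity on the star-shaped domain can be invoked to kill $w$.
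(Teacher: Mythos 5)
Your overall strategy is sound and, for the hard implication, essentially the same as the paper's: the paper proves the cycle (i)$\Rightarrow$(ii)$\Rightarrow$(iii)$\Rightarrow$(iv)$\Rightarrow$(i), and its (iii)$\Rightarrow$(iv) step is exactly your dissipation-plus-rigidity mechanism (bounded snapshots, $\int_0^\infty\Vert u_t\Vert_2^2\,dt<\infty$ producing times $\bar t_n$ with $\Vert u_t(\bar t_n)\Vert_2\to 0$, parabolic compactness, a nonnegative stationary limit killed by Pohozaev/star-shapedness, hence a contradiction). Your reorganization with (ii) as the hub, using the dichotomy $\{J_\mu<m_\mu\}=W_\mu\cup V_\mu$ together with Theorems \ref{T-Blow up} and \ref{T-Global existence}, is clean and actually bypasses the paper's Lemmas \ref{L4.1}--\ref{L4.2} and the embedding $H^2(\Omega)\hookrightarrow L^\infty(\Omega)$, which is the only place the paper genuinely uses $N=3$ (in its (iv)$\Rightarrow$(i) step); your closing remark that $N=3$ is what makes your compactness step work is therefore off the mark, though harmless since the hypothesis is assumed anyway.

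The one genuine flaw is in the execution of (ii)$\Rightarrow$(iv). You assume $\Vert u(t_n)\Vert_\infty\le M$ only at the snapshot times, yet you work on the two-sided window $v_n(s)=u(t_n+s)$, $s\in[-1,1]$, and claim $K_n$ is uniformly bounded there and that $\{v_n\}$ is precompact in $C^{2,1}(\overline{\Omega}\times[-1/2,1/2])$. A bound at the single time $t_n$ gives no control on $u(t)$ for $t<t_n$ (the equation is not time-reversible), and even forward in time a uniform $L^\infty$ bound on an interval must first be propagated, uniformly in $n$, by local well-posedness/comparison --- exactly what the paper does when it produces $\bar t>0$ with $\sup_{t\in(t_n,t_n+\bar t)}\Vert u(t)\Vert_\infty\le C$; the energy bound alone only yields $K(t,\cdot)\in L^{2N/\mu}(\Omega)$, not $L^\infty$. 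The repair is routine and brings you back to the paper's argument: discard the backward part, propagate the $L^\infty$ bound forward on $[t_n,t_n+\bar t]$, choose $\bar t_n\in[t_n,t_n+\bar t]$ with $\Vert u_t(\bar t_n)\Vert_2\to 0$, obtain compactness of $\{u(\bar t_n)\}$ in $C^2(\overline{\Omega})$ at these interior times (not at $s=0$, where no parabolic smoothing is available), conclude $u(\bar t_n)\to 0$, hence $J_\mu(u(\bar t_n))\to 0<m_\mu$, contradicting (ii) by monotonicity of $J_\mu$. With this modification your proof is complete.
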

	
	Consequently, using Theorems \ref{T-Existence potential well}, \ref{T-Global existence} and Theorem \ref{T-Blow up infinite}, we may determine the asymptotic behavior of the solutions in the following corollary.
	
	\begin{cor}\label{Corollary}
		$(i)$ %By Theorem \ref{T-Existence potential well}-\ref{T-Global existence}, 
		Let $0\leq\phi\in L^{\infty}\cap H^{1}_{0}(\Omega)$, $u_{0}=\lambda\phi$ and $u$ be the solution to \eqref{E} with initial data $u_0$. Then, $T_{\max}=\infty$ and  $u(t)\rightarrow0$ in $H^{1}_{0}(\Omega)\cap L^{\infty}(\Omega)$ as $t\rightarrow\infty$ if $\lambda\in(0,\lambda_{1})$ and $\Vert u(t)\Vert_{\infty}, \|u(t)\|_{H^1_0(\Omega)}\rightarrow\infty$ as $t\rightarrow T_{\max}^-<+\infty$ if $\lambda\in(\lambda_{2},\infty)$.\\
		$(ii)$ %By Theorem \ref{T-Existence potential well} and \ref{T-Blow up infinite},
		 If $\lambda\in(\lambda_{1},\lambda_{2})$, then $u(t)$ blow-up in infinite time, i.e. $\lim_{t\rightarrow\infty}\Vert u(t)\Vert_{\infty}=+\infty$.
	\end{cor}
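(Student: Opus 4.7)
The corollary is a direct synthesis of Theorems \ref{T-Existence potential well}, \ref{T-Global existence} and \ref{T-Blow up infinite}. I would split the argument according to the trichotomy of $\lambda$ supplied by Theorem \ref{T-Existence potential well}.

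For $\lambda\in(0,\lambda_{1})$, part~(i) of Theorem \ref{T-Existence potential well} gives a time $t_{0}\in[0,T_{\max})$ with $u(t_{0})\in W_{\mu}$; treating $u(t_{0})$ as new initial datum and applying Theorem \ref{T-Global existence} yields $T_{\max}=+\infty$ together with $u(t)\to 0$ in $H^{1}_{0}(\Omega)$ and in $L^{r}(\Omega)$ for every $r\in[2,\infty]$, and the choice $r=\infty$ supplies the $L^{\infty}$-decay. For $\lambda\in(\lambda_{2},\infty)$, part~(ii) gives $t_{0}$ with $u(t_{0})\in V_{\mu}$, and Theorem \ref{T-Blow up} directly delivers $T_{\max}<+\infty$ and $\|u(t)\|_{\infty},\|u(t)\|_{H^{1}_{0}(\Omega)}\to+\infty$ as $t\to T_{\max}^{-}$. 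These two subcases together establish part~(i) of the corollary.

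The intermediate regime $\lambda\in(\lambda_{1},\lambda_{2})$ is where the real work sits. From part~(iii) of Theorem \ref{T-Existence potential well} the orbit avoids $W_{\mu}\cup V_{\mu}$ throughout $[0,T_{\max})$; the maximum principle applied to $u_{0}=\lambda\phi\geq 0$ preserves $u(t)\geq 0$; and I will invoke Theorem \ref{T-Blow up infinite} under its standing hypotheses $N=3$ and $\Omega$ star-shaped. The missing ingredient is $T_{\max}=+\infty$. I plan to argue by contradiction: if $T_{\max}<+\infty$, then the $H^{1}_{0}$-continuation criterion inherent in Theorem \ref{thm-existence} (whose local existence time depends only on $\|\cdot\|_{H^{1}_{0}}$) forces $\|u(t)\|_{H^{1}_{0}(\Omega)}\to+\infty$; on the other hand the identity
\begin{equation*}
I_{\mu}(u(t))=2\cdot 2^{\ast}_{\mu}\,J_{\mu}(u(t))-(2^{\ast}_{\mu}-1)\,\|u(t)\|_{H^{1}_{0}(\Omega)}^{2},
\end{equation*}
combined with the monotonicity $J_{\mu}(u(t))\leq J_{\mu}(u_{0})$ provided by $\frac{d}{dt}J_{\mu}(u(t))=-\|u_{t}\|_{2}^{2}$, drives $I_{\mu}(u(t))\to-\infty$. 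A potential-well argument, using the definition of $m_{\mu}$ and the continuity of $t\mapsto(J_{\mu}(u(t)),I_{\mu}(u(t)))$, then locates a time $t_{*}$ at which $u(t_{*})\in V_{\mu}$, contradicting (iii). With $T_{\max}=+\infty$ secured, the orbit avoidance is precisely condition~(iii) of Theorem \ref{T-Blow up infinite}, so its equivalent condition~(iv) yields $\lim_{t\to\infty}\|u(t)\|_{\infty}=+\infty$, establishing part~(ii).

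The principal technical obstacle is the globality step just described. The quantitative passage from an $H^{1}_{0}$-blow-up to a crossing of $V_{\mu}$ is somewhat delicate because $J_{\mu}(u_{0})$ may well lie above $m_{\mu}$ in the intermediate regime, so the standard "low-energy" potential-well dichotomy is not directly available and one must exploit both the monotonicity of $J_{\mu}$ along the flow and the precise shape of $V_{\mu}$ in the $(J_{\mu},I_{\mu})$-plane. Everything else in the proof is a bookkeeping assembly of the three theorems cited above.
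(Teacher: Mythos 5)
Part (i) of your proposal is correct and is exactly the paper's (implicit) argument: Theorem \ref{T-Existence potential well}(i) plus Proposition \ref{P-Invariant set} and Theorem \ref{T-Global existence} for $\lambda\in(0,\lambda_{1})$, and Theorem \ref{T-Existence potential well}(ii) plus Theorem \ref{T-Blow up} for $\lambda\in(\lambda_{2},\infty)$. Also, your remark that part (ii) needs the standing hypotheses of Theorem \ref{T-Blow up infinite} ($N=3$, $\Omega$ star-shaped, $u_{0}\geq0$) is accurate; the paper simply reads the conclusion off Theorem \ref{T-Existence potential well}(iii) together with the implication $(iii)\Rightarrow(iv)$ of Theorem \ref{T-Blow up infinite}, treating the solution as global (Theorem \ref{T-Blow up infinite} is stated only for global solutions), so it offers no argument for $T_{\max}=+\infty$ in the intermediate regime.

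The genuine problem is with the globality step you propose, which is also the step you flag as the ``principal technical obstacle'' and never actually carry out. Your plan is: assume $T_{\max}<+\infty$, deduce $A(u(t))\to\infty$ from Proposition \ref{P-Finite time}, use $J_{\mu}(u(t))\leq J_{\mu}(u_{0})$ and the identity $I_{\mu}(u)=2\cdot2^{\ast}_{\mu}J_{\mu}(u)-(2^{\ast}_{\mu}-1)A(u)$ to get $I_{\mu}(u(t))\to-\infty$, and then ``locate a time $t_{*}$ with $u(t_{*})\in V_{\mu}$,'' contradicting Theorem \ref{T-Existence potential well}(iii). This cannot work as sketched: membership in $V_{\mu}$ requires \emph{both} $I_{\mu}(u)<0$ and $J_{\mu}(u)<m_{\mu}$, and by Remark \ref{R2} (see \eqref{a6}) the sublevel set $\{J_{\mu}<m_{\mu}\}$ coincides with $W_{\mu}\cup V_{\mu}$. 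Hence in the regime $\lambda\in(\lambda_{1},\lambda_{2})$ the avoidance of $W_{\mu}\cup V_{\mu}$ is \emph{equivalent} to $J_{\mu}(u(t))\geq m_{\mu}$ for all $t\in[0,T_{\max})$ (this is also $(ii)\Leftrightarrow(iii)$ of Theorem \ref{T-Blow up infinite}), so no amount of negativity of $I_{\mu}$ can ever place the orbit in $V_{\mu}$; the contradiction you aim at is unreachable by this route. What would actually be needed is a proof that finite-time blow-up forces the energy to drop strictly below $m_{\mu}$ (or some other continuation/a priori bound argument), and nothing in the data you have at hand — $A(u(t))\to\infty$, $I_{\mu}(u(t))\to-\infty$, $J_{\mu}(u(t))\geq m_{\mu}$, $\int_{0}^{T_{\max}}\Vert u_{t}\Vert_{2}^{2}\,dt\leq J_{\mu}(u_{0})-m_{\mu}$ — yields this; indeed these facts are mutually consistent on a finite interval, since they only force $\int_{0}^{T_{\max}}A(u(t))\,dt<\infty$. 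So part (ii) of your proposal has a real gap at exactly the point you identified; either supply a genuine argument for $T_{\max}=+\infty$ in the intermediate regime, or state part (ii), as the paper effectively does, under the additional hypothesis that the solution is global.
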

	There are a lot of works devoted to the existence of $L^{\infty}$-uniform bounds for the global solutions, see \cite{Brezis-Cazenave,Cazenave-Lions,Fila-Souplet-Weissler,Giga1, Quittner1,Quittner2} and references therein. It is well known that every global solution of equation \eqref{A1} has an $L^{\infty}$-global bound for $1<p<(N+2)/(N-2)$. However, the conclusion for the case $p=(N+2)/(N-2)$ is much more delicate. Indeed, due to the non compactness of the nonlinear term, the method used for the subcritical case is not further valid for the critical one and there exists an unbounded global solution in this case. In \cite{Ishiwata2007}, Ishiwata gave some sufficient and necessary conditions for the $L^{\infty}$-uniform boundness of  global solution when $p=(N+2)/(N-2)$. In this paper, we aim to classify the solutions of problem \eqref{E} which  are uniform bounded through some suitable sufficient and necessary conditions. More precisely, we shall describe the relation between the existence of $L^{\infty}$-uniform bounds for global solutions to problem \eqref{E} and the compactness property of the associated energy functional $J_{\mu}$.
		%of \eqref{E} satisfies Palais-Smale ($(PS)$ for short) condition. 
		Denoting
	\begin{equation}\label{c}
		c:=\lim_{t\rightarrow\infty}J_{\mu}(u(t)),
	\end{equation}
	we recall
	\begin{Def}\label{D-PS}
		{\rm ($(PS)_{c}$ sequence and $(PS)_{c}$ condition)} Let $c\in\mathbb{R}$.\newline
		$(i)$ A sequence $\left\lbrace u_{n}\right\rbrace $ is a $(PS)_{c}$ sequence in $H^{1}_{0}(\Omega)$ for $J_{\mu}$ if $J_{\mu}(u_{n})=c+o(1)$ and $J_{\mu}^{\prime}(u_{n})=o(1)$ strongly in $(H^{1}_{0}(\Omega))^{\ast}$ as $n\rightarrow\infty$, and $c$ is said to be $(PS)$ value in $H^{1}_{0}(\Omega)$ for $J_{\mu}$;\newline
		$(ii)$ $J_{\mu}$ satisfies $(PS)_{c}$ condition in $H^{1}_{0}(\Omega)$ if any $(PS)_{c}$ sequence in $H^{1}_{0}(\Omega)$ for $J_{\mu}$ contains a convergent subsequence.
	\end{Def}
	\begin{Def}\label{D-GB}
	{\rm ($L^{\infty}$-uniformly bounded)} A global solution $u$ is said to be $L^{\infty}$-uniformly bounded if
		\begin{equation*}
			\sup_{t\in[0,+\infty)}\Vert u\Vert_{\infty}<\infty.
		\end{equation*}
		Otherwise, $u$ is an unbounded global solution.
	\end{Def}
	We have the $L^{\infty}$-uniform boundedness of the global solutions.
	\begin{thm}\label{T-Global bounds}
		 For any global solution $u$ of problem \eqref{E}, the following statements are equivalent:
		\begin{item}
			$(i)$ The energy $J_{\mu}$ of problem \eqref{E} satisfies the $(PS)_{c}$-condition,\\
			$(ii)$ $u$ is $L^\infty$-uniformly bounded.
			%has an $L^{\infty}$-global bound, that is $\sup_{t\in[0,+\infty)}\Vert u(t)\Vert_{\infty}<\infty$.
		\end{item}
	\end{thm}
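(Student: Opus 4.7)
The link between the two conditions is the energy identity
\begin{equation*}
J_\mu(u(t))+\int_s^t\|u_\tau(\tau)\|_2^2\,d\tau=J_\mu(u(s)),\qquad 0\le s\le t<\infty,
\end{equation*}
obtained by testing \eqref{E} with $u_t$ (legitimate because of the regularity furnished by Theorem~\ref{thm-existence}). Since $u$ is global, $J_\mu(u(t))\ge 0$ is non-increasing, so $J_\mu(u(t))\searrow c$ and $\int_0^\infty\|u_t\|_2^2\,dt<\infty$. I can therefore extract $t_n\to\infty$ with $\|u_t(t_n)\|_2\to 0$; the PDE \eqref{E} gives $\langle J_\mu'(u(t_n)),\varphi\rangle=-\int_\Omega u_t(t_n)\varphi\,dx$ for every $\varphi\in H^1_0(\Omega)$, so $\|J_\mu'(u(t_n))\|_{(H^1_0)^\ast}\to 0$, and the Nehari-type identity
\begin{equation*}
\tfrac{2^\ast_\mu-1}{2\cdot 2^\ast_\mu}\,\|u\|_{H^1_0}^2=J_\mu(u)-\tfrac{1}{2\cdot 2^\ast_\mu}\,\langle J_\mu'(u),u\rangle
\end{equation*}
forces $\|u(t_n)\|_{H^1_0}^2\to\tfrac{2\cdot 2^\ast_\mu}{2^\ast_\mu-1}\,c$. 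Thus $\{u(t_n)\}$ is automatically a bounded $(PS)_c$-sequence for $J_\mu$.

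\textbf{Direction $(i)\Rightarrow(ii)$.} Under $(PS)_c$ the sequence above admits an $H^1_0$-strong subsequential limit $u^\ast$ which solves the stationary Choquard equation; a Brezis--Kato/Moser bootstrap tailored to the Riesz source (via Lemma~\ref{L-HLS}) gives $u^\ast\in L^\infty(\Omega)$, and the compactness coming from $(PS)_c$ makes the whole critical set $\mathcal K_c$ at level $c$ a compact subset of $H^1_0\cap L^\infty$. To upgrade this to a uniform $L^\infty$-bound on the full tail I argue by contradiction: if $\|u(s_k)\|_\infty\to\infty$ along $s_k\to\infty$, then, since $\{t\ge 0:\|u_t(t)\|_2^2>\varepsilon\}$ has finite Lebesgue measure for each $\varepsilon>0$, I can perturb each $s_k$ to a nearby $s_k'$ with $\|u_t(s_k')\|_2\to 0$; continuity of $t\mapsto u(t)$ in $H^1_0$ combined with the smoothing estimates \eqref{bbb5}--\eqref{bbb6} keeps $u(s_k')$ bounded in $H^1_0$ and renders $\{u(s_k')\}$ a $(PS)_c$-sequence. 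The $(PS)_c$-condition then produces an $H^1_0$-strong limit lying in $\mathcal K_c$, and Brezis--Kato furnishes the corresponding $L^\infty$-control, contradicting $\|u(s_k')\|_\infty\to\infty$.

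\textbf{Direction $(ii)\Rightarrow(i)$.} Assume $\sup_t\|u(t)\|_\infty<\infty$. Then the nonlocal source in \eqref{E} is bounded in every $L^q$ with $q<\infty$, so the smoothing \eqref{bbb5}--\eqref{bbb6} together with the Nehari identity makes $\{u(t):t\ge 1\}$ relatively compact in $H^1_0(\Omega)$; a LaSalle-type argument applied to $J_\mu$ identifies $\omega(u_0)$ with a compact set of stationary solutions at the level $c$. For a generic $(PS)_c$-sequence $\{v_n\}\subset H^1_0(\Omega)$, the Nehari identity again yields $H^1_0$-boundedness and a weak limit $v$ solving the stationary equation; I then invoke a Struwe-type profile decomposition for the nonlocal critical functional $J_\mu$, resting on the Brezis--Lieb splitting of the double Riesz integral and the classification of extremals of $S_{H,L}$ in \cite{Gao-Yang}, to write
\begin{equation*}
v_n=v+\sum_{j=1}^{k}B_j^n+r_n,\qquad r_n\to 0\ \text{in }H^1_0(\Omega),
\end{equation*}
where each $B_j^n$ is a rescaled Hartree bubble carrying energy $m_\mu$ and $c=J_\mu(v)+k\,m_\mu$. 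The presence of a bubble in $\{v_n\}$ would, via the scale-invariance of Proposition~\ref{P-Scaling properties} and a suitable gluing, produce an $L^\infty$-unbounded orbit of \eqref{E} with the same asymptotic energy $c$, contradicting~$(ii)$. Hence $k=0$ and $v_n\to v$ strongly in $H^1_0$, which is the $(PS)_c$ condition.

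\textbf{Main obstacle.} The decisive step is the nonlocal profile decomposition and the associated energy quantization $c=J_\mu(v)+k\,m_\mu$: the double Riesz integral demands an HLS-refined Brezis--Lieb identity and delicate bookkeeping of the scales and centres of the Hartree bubbles, while promoting $H^1_0$-strong convergence to an $L^\infty$-bound at critical growth requires the nonlocal Brezis--Kato iteration built on Lemma~\ref{L-HLS}. Equally delicate is the translation of a ``bubble'' in an abstract $(PS)_c$-sequence into the existence of an honest unbounded orbit of the flow, which I expect to carry out through the scaling of Proposition~\ref{P-Scaling properties} combined with the well-posedness and continuous-dependence package of Theorem~\ref{thm-existence}.
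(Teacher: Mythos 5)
Your reduction to the energy identity and the extraction of a bounded $(PS)_c$-sequence $\{u(t_n)\}$ along the orbit is fine and agrees with the paper, but both of your main implications have genuine gaps. In $(i)\Rightarrow(ii)$ the final contradiction does not follow: strong $H^1_0$-convergence of $u(s_k')$ to a stationary limit in $\mathcal K_c$ gives no control whatsoever on $\Vert u(s_k')\Vert_\infty$ at critical growth (a spike of height $M$ and width $\varepsilon$ has $H^1$-norm $\sim M\varepsilon^{(N-2)/2}$), and Brezis--Kato regularity applies to the \emph{limit}, which solves the elliptic equation, not uniformly to the time slices $u(s_k')$, which only solve it up to an error $u_t(s_k')$ that is small in $L^2$ -- far too weak for uniform elliptic estimates at the exponent $2^*_\mu$. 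You also never verify that $\Vert u(s_k')\Vert_\infty\to\infty$ survives the perturbation of $s_k$ to $s_k'$. The paper closes exactly this gap by a completely different mechanism: a Gidas--Spruck type rescaling about points where $|u|$ nearly attains its running maximum, producing a nontrivial limit profile $v$ with strictly positive localized Riesz energy on a small ball, which is then contradicted by the strong convergence $B(u(t_n))\to B(\tilde u)$; the latter rests on Lemma~\ref{L5.1} (convergence of $A(u(t))$ and $B(u(t))$ along the \emph{whole} flow, proved via the quantization gap coming from $S_{H,L}$ and a Brezis--Lieb splitting), and the boundary-concentration case $\bar x\in\partial\Omega$ has to be treated separately. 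None of this blow-up analysis appears in your proposal, and without it the passage from $H^1_0$-compactness to an $L^\infty$-bound is unsubstantiated.

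In $(ii)\Rightarrow(i)$ the logic of your contradiction is flawed: statement $(ii)$ asserts boundedness of the \emph{given} solution $u$, so exhibiting some other $L^\infty$-unbounded orbit at the asymptotic level $c$ (even granting the unproved ``gluing'' of a Hartree bubble from an abstract $(PS)_c$-sequence into an actual orbit of the parabolic flow) contradicts nothing. Moreover, for the critical Hartree functional on a bounded domain, compactness of \emph{arbitrary} $(PS)_c$-sequences is a property of the level $c$ alone and cannot be forced by the boundedness of one particular trajectory, so the profile-decomposition route cannot deliver the implication in the generality you aim at. The paper's proof of this direction is deliberately modest and elementary: it takes the $(PS)_c$-sequences \emph{along the orbit}, $t_n\to\infty$ with $J_\mu(u(t_n))\to c$ and $J_\mu'(u(t_n))\to 0$, uses the uniform $L^\infty$-bound and parabolic regularity to get convergence in $C(\bar\Omega)$, hence $B(u(t_n))\to B(u)$ by Hardy--Littlewood--Sobolev, and concludes strong $H^1_0$-convergence from $A(u(t_n))-B(u(t_n))=o(1)$ together with $A(u)=B(u)$; no Struwe decomposition, LaSalle argument, or energy quantization is needed (nor would they repair the logical gap above).
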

	
	\begin{cor}\label{C-Global bounds}
		Let $u$ be a global solution of problem \eqref{E} such that $c<\frac{N-\mu+2}{2(2N-\mu)}S^{\frac{2N-\mu}{N-\mu+2}}_{H,L}$, where $c$ is given in (\ref{c}).
		Then $u$ is $L^{\infty}$-uniformly bounded.
	\end{cor}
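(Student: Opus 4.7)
The plan is to reduce the statement to Theorem~\ref{T-Global bounds}: since the hypothesis is $c<m_\mu$, it suffices to verify that the energy functional $J_\mu$ satisfies the $(PS)_c$ condition on $H^1_0(\Omega)$, from which the $L^\infty$-uniform bound follows immediately by the equivalence established there.

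I would start with an arbitrary $(PS)_c$ sequence $\{u_n\}\subset H^1_0(\Omega)$. A standard combination gives
\[
J_\mu(u_n)-\tfrac{1}{2\cdot 2^\ast_\mu}\langle J_\mu'(u_n),u_n\rangle=\tfrac{N-\mu+2}{2(2N-\mu)}\|\nabla u_n\|_2^2=c+o(1),
\]
yielding a uniform $H^1_0$-bound. Extracting a subsequence, $u_n\rightharpoonup u$ in $H^1_0(\Omega)$, $u_n\to u$ a.e., and $u_n\to u$ strongly in $L^r(\Omega)$ for every $r<2^\ast$. Using Lemma~\ref{L-HLS} and arguments as in \cite{Gao-Yang} one passes to the limit in $\langle J_\mu'(u_n),\varphi\rangle$ for arbitrary test functions $\varphi$ to check that $u$ is a weak solution of the stationary counterpart of~\eqref{E}, so $I_\mu(u)=0$ and hence $J_\mu(u)=\frac{N-\mu+2}{2(2N-\mu)}\|\nabla u\|_2^2\geq 0$.

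Next, set $v_n:=u_n-u$. The Brezis-Lieb lemma for gradients and its Choquard-type analogue for the nonlocal term (see e.g.\ \cite{Gao-Yang}) give the splittings $\|\nabla u_n\|_2^2=\|\nabla u\|_2^2+\|\nabla v_n\|_2^2+o(1)$ together with the corresponding decomposition of the double Hardy-Littlewood-Sobolev integral. Combining these with $I_\mu(u)=0$ and $\langle J_\mu'(u_n),u_n\rangle=o(1)$ one obtains $I_\mu(v_n)\to 0$ and $J_\mu(v_n)\to c-J_\mu(u)$. If $v_n$ does not tend to $0$ strongly, pass to a subsequence with $\|\nabla v_n\|_2^2\to A>0$. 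Then $I_\mu(v_n)\to 0$ together with the definition of $S_{H,L}$ in~\eqref{SHL} forces $A\leq S_{H,L}^{-2^\ast_\mu}A^{2^\ast_\mu}$, hence $A\geq S_{H,L}^{(2N-\mu)/(N-\mu+2)}$. Consequently
\[
c-J_\mu(u)=\lim_{n\to\infty}J_\mu(v_n)=\tfrac{N-\mu+2}{2(2N-\mu)}A\geq m_\mu,
\]
and since $J_\mu(u)\geq 0$, this forces $c\geq m_\mu$, contradicting the hypothesis. Therefore $v_n\to 0$ strongly in $H^1_0(\Omega)$, which establishes the $(PS)_c$ condition.

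The chief obstacle is the nonlocal Brezis-Lieb decomposition for the double Hardy-Littlewood-Sobolev integral: unlike the classical local critical case, the argument needs a careful combination of pointwise a.e.\ convergence with the HLS inequality to control the cross-terms arising from the product structure $|u_n(x)|^{2^\ast_\mu}|u_n(y)|^{2^\ast_\mu}$. Once that decomposition is at hand, the rest of the scheme is the familiar Brezis-Nirenberg level analysis transferred to the Choquard setting.
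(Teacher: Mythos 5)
Your proposal is correct and takes essentially the same route as the paper: both reduce the corollary to Theorem \ref{T-Global bounds} by verifying that $J_\mu$ satisfies the $(PS)_c$-condition whenever $c<m_\mu=\frac{N-\mu+2}{2(2N-\mu)}S_{H,L}^{\frac{2N-\mu}{N-\mu+2}}$. The only difference is that the paper simply cites \cite{Gao-Yang} for this compactness fact, whereas you reprove it via the standard Brezis--Lieb splitting below the critical level, and your sketch of that argument is sound.
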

	
	The rest of this paper is organized as follows. In Section 2, we introduce the properties of stable and unstable sets and give the result of local well-posedness in $H^{1}_{0}(\Omega)$ from which the proof of Theorem \ref{thm-existence} follows. In Section 3, we prove Theorems \ref{T-Existence potential well}-\ref{T-Global existence}. In section 4, we investigate the proof of Theorem \ref{T-Blow up infinite} and give the long time behavior of solutions to problem \eqref{E}. In Section 5, we prove Theorem \ref{T-Global bounds} and verify that any global solution is $L^\infty$-uniformly bounded under suitable conditions from which Corollary \ref{C-Global bounds} follows.

	\section{Preliminaries}
	In this section, we recall some basic properties of stable and unstable sets.
	First, for convenience, we set for $u\in H^1_0(\Omega)$
	\begin{equation*}
		A(u):=\int_{\Omega}\vert \nabla u\vert^{2}dx, \ \ \ B(u):=\int_{\Omega}\int_{\Omega}\frac{|u(x)|^{2^{\ast}_{\mu}}|u(y)|^{2^{\ast}_{\mu}}}{|x-y|^{\mu}}dxdy,
	\end{equation*}
	and define
	\begin{equation}\label{Energy}
	J_\mu(u)=\frac{1}{2}A(u)
		-\frac{1}{22^{\ast}_{\mu}}B(u),
	\end{equation}
	which acts as a Lyapunov type functional along trajectories associated to problem \eqref{E}. Setting
	\begin{equation*}
		\tilde m_{\mu}:=\inf_{u\in H^{1}_{0}(\Omega)\setminus\{0\}}\sup_{\theta>0}J_{\mu}(\theta u),
	\end{equation*}
	we have that
	\begin{equation}\label{Potential Deep}
		\tilde m_{\mu}=m_\mu=\frac{N-\mu+2}{2(2N-\mu)}S^{\frac{2N-\mu}{N-\mu+2}}_{H,L}.
	\end{equation}
	Indeed, by a simple calculation, we derive that
	\begin{equation*}
		\sup_{\theta>0}J_{\mu}(\theta u)=J_{\mu}(\bar{\theta}u)
		=\frac{N-\mu+2}{2(2N-\mu)}\left\lbrace \frac{A(u)}{\left(B(u)\right)^{\frac{N-2}{2N-\mu}} }\right\rbrace ^{\frac{2N-\mu}{N-\mu+2}},
	\end{equation*}
	where $\bar{\theta}:=\left( \frac{A(u)}{B(u)}\right)  ^{\frac{N-2}{2(N-\mu+2)}}$.
	Therefore, we have
	\begin{equation*}
		\tilde{m}_{\mu}=\frac{N-\mu+2}{2(2N-\mu)}\inf_{u\in H^{1}_{0}(\Omega)\setminus\{0\}}\left\lbrace \frac{A(u)}{\left(B(u)\right)^{\frac{N-2}{2N-\mu}} }\right\rbrace ^{\frac{2N-\mu}{N-\mu+2}}=\frac{N-\mu+2}{2(2N-\mu)}S^{\frac{2N-\mu}{N-\mu+2}}_{H,L},
	\end{equation*}
	where $S_{H,L}$ is the sharp constant of the Hardy-Littlewood-Sobolev inequality.
	
%	Following \cite{Sattinger,Payne-Sattinger}, we now define the stable and unstable sets as follow
%	\begin{equation}\label{W}
	%	W_{\mu}=\left\{u\in H^{1}_{0}(\Omega)\ |\ J_{\mu}(u)<m_{\mu},\ I_{\mu}(u)>0\right\}\cup\left\lbrace 0\right\rbrace
	%\end{equation}
	%and
	%\begin{equation}\label{V}
	%	V_{\mu}=\left\lbrace u\in H^{1}_{0}(\Omega)\ |\ J_{\mu}(u)<m_{\mu},\ I_{\mu}(u)<0\right\rbrace
	%\end{equation}
%	respectively, where $I_{\mu}(u)$ is the Nehari functional for problem \eqref{E} defined by
From \eqref{V} and \eqref{W} we have:
	\begin{equation}\label{Nehari-Energy}
		I_{\mu}(u):=A(u)-B(u).
	\end{equation}
	Set
	\begin{equation*}
		S_{\mu}:=\left\lbrace u\in H^{1}_{0}(\Omega)\ |\ -\Delta u=\left( \int_{\Omega}\frac{|u(y)|^{2^{\ast}_{\mu}}}{|x-y|^{\mu}}dy\right) |u|^{2^{\ast}_{\mu}-2}u\ \text{in}\ \Omega\right\rbrace
	\end{equation*}
	and
	\begin{equation}\label{S}
		S_{\mu}^{+}:=S_{\mu}\cap\left\lbrace u\in H^{1}_{0}(\Omega)\ |\ u(x)\geq0\ \text{a.e. in}\ \Omega\right\rbrace .
	\end{equation}
We have the following properties about the stable set $W_{\mu}$ and the unstable set $V_{\mu}$.
	
	\begin{Prop}\label{P-Properties}
		Let $W_{\mu}, V_{\mu}$ and $S_{\mu}^{+}$ as given in (\ref{W})-(\ref{V}) and (\ref{S}) respectively. Then
		\begin{item}
			$(i)$ $W_{\mu}$ is a bounded neighbourhood of $0$ in $H^{1}_{0}(\Omega)$ and is star-shaped with respect to $0$,\\
			$(ii)$ $V_{\mu}$ is arcwise connected in $H^{1}_{0}(\Omega)$,\\
			$(iii)$ if $\Omega$ is a star-shaped domain, then $S_{\mu}^{+}=\{0\}$,\\
			$(iv)$ $\overline{W_{\mu}}=\left\{u\in H^{1}_{0}(\Omega)\ |\ J_{\mu}(u)\leq m_{\mu},\ I_{\mu}(u)>0\right\}\cup\left\lbrace 0\right\rbrace$,\\
			$(v)$ $0\notin\overline{V_{\mu}}=\left\lbrace u\in H^{1}_{0}(\Omega)\ |\ J_{\mu}(u)\leq m_{\mu},\ I_{\mu}(u)<0\right\rbrace$,\\
			$(vi)$ $\overline{W_{\mu}}\cap\overline{V_{\mu}}=\emptyset$,\\
		\end{item}
		where $\overline{W_{\mu}}$ and $\overline{V_{\mu}}$ denote the closure of $W_{\mu}$ and $V_{\mu}$ in $H^{1}_{0}(\Omega)$ respectively.
	\end{Prop}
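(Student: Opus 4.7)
\textbf{Proof plan for Proposition \ref{P-Properties}.} Each part relies on two common tools: (a) the fiber-map derivatives
$\tfrac{d}{d\theta}J_\mu(\theta u) = \theta(A(u) - \theta^{22^\ast_\mu - 2}B(u))$
and $I_\mu(\theta u) = \theta^2(A(u) - \theta^{22^\ast_\mu - 2}B(u))$,
whose signs are controlled by comparing $A(u)$ and $B(u)$; and (b) the strict HLS-Sobolev inequality $A(u) > S_{H,L}B(u)^{(N-2)/(2N-\mu)}$ for every $u \in H^1_0(\Omega)\setminus\{0\}$, which follows from the fact that $S_{H,L}(\Omega)=S_{H,L}$ is never attained on a bounded domain. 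With these in hand, (i) follows at once: openness of $W_\mu$ from continuity of $J_\mu, I_\mu$; on $W_\mu$ the bound $I_\mu(u)>0$ gives $B(u)<A(u)$ and hence $\tfrac{N-\mu+2}{2(2N-\mu)}A(u) < J_\mu(u) < m_\mu$, so $A(u) < S_{H,L}^{(2N-\mu)/(N-\mu+2)}$; a neighborhood of $0$ lies in $W_\mu$ because $B(u)\leq C A(u)^{2^\ast_\mu} = o(A(u))$ for $u$ small; and star-shapedness follows since $A(u)-\theta^{22^\ast_\mu-2}B(u)\geq A(u)-B(u)>0$ for $\theta\in(0,1]$ delivers both $I_\mu(\theta u)>0$ and $J_\mu(\theta u)\leq J_\mu(u)<m_\mu$.

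For (iv) and (v), the inclusion $\supseteq$ uses ray scaling: if $J_\mu(u)\leq m_\mu$ and $I_\mu(u)>0$, the computation above gives $\theta u\in W_\mu$ for all $\theta\in(0,1)$, with $\theta u\to u$ as $\theta\to 1^-$; symmetrically, $I_\mu(u)<0$ and $J_\mu(u)\leq m_\mu$ give $\theta u\in V_\mu$ for $\theta>1$ close to $1$. The inclusion $\subseteq$ reduces to excluding nonzero limits $u$ with $I_\mu(u)=0$: for such $u$, $A(u)=B(u)$ combined with the strict HLS-Sobolev inequality yields $A(u)>S_{H,L}^{(2N-\mu)/(N-\mu+2)}$, whence $J_\mu(u)=\tfrac{N-\mu+2}{2(2N-\mu)}A(u)>m_\mu$, contradicting the bound $J_\mu(u)\leq m_\mu$ inherited from the closure. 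The same chain of estimates shows $V_\mu\subset\{A(u)>S_{H,L}^{(2N-\mu)/(N-\mu+2)}\}$, hence $0\notin\overline{V_\mu}$.

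Property (vi) is then immediate from (iv) and (v): $\overline{W_\mu}\subset\{I_\mu>0\}\cup\{0\}$ and $\overline{V_\mu}\subset\{I_\mu<0\}$ are disjoint once one recalls $0\notin\overline{V_\mu}$. For (iii), I invoke the Pohozaev identity adapted to the Choquard-type nonlinearity: testing the stationary equation against $x\cdot\nabla u$ and integrating by parts using $u|_{\partial\Omega}=0$ yields
\begin{equation*}
A(u) - B(u) = -\frac{1}{N-2}\int_{\partial\Omega}(\partial_\nu u)^2 (x\cdot\nu)\, dS.
\end{equation*}
Since $u$ also satisfies the Nehari identity $I_\mu(u)=0$ (test against $u$), the left-hand side vanishes, so the boundary integral is zero. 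Star-shapedness (after a translation) gives $x\cdot\nu>0$ on $\partial\Omega$, forcing $\partial_\nu u\equiv 0$; combined with the boundary condition and Hopf's lemma applied to the nonnegative strong solution, this yields $u\equiv 0$.

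The main obstacle is (ii), arcwise connectedness of $V_\mu$. My plan is a three-segment concatenation: given $u,v\in V_\mu$, (a) rescale along the ray $t\mapsto tu$ for $t\in[1,T]$, which stays in $V_\mu$ by the sign computation above (since $A(u)<B(u)$, the bracket $A(u)-t^{22^\ast_\mu-2}B(u)$ is $\leq A(u)-B(u)<0$ for $t\geq 1$, and analogously for $J_\mu$); (b) connect $Tu$ and $Tv$ via $s\mapsto T\gamma_0(s)$, where $\gamma_0(s)=(1-s)u+sv+s(1-s)\phi$ is a perturbed straight line with $\phi$ chosen linearly independent of $\{u,v\}$ so that $\gamma_0(s)\neq 0$ throughout $[0,1]$; (c) rescale $Tv\to v$ by reversing (a). The delicate step is (b): one must take $T$ large enough that $I_\mu(T\gamma_0(s))<0$ and $J_\mu(T\gamma_0(s))<m_\mu$ uniformly in $s$. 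This is secured by the uniform bounds $0<c\leq B(\gamma_0(s))\leq C$ and $A(\gamma_0(s))\leq M$ on the compact interval $[0,1]$, together with the observation that as $T\to\infty$ the $T^{22^\ast_\mu}$-term dominates in both $I_\mu(T\gamma_0)=T^2(A(\gamma_0)-T^{22^\ast_\mu-2}B(\gamma_0))$ and $J_\mu(T\gamma_0)=\tfrac{T^2}{2}A(\gamma_0)-\tfrac{T^{22^\ast_\mu}}{22^\ast_\mu}B(\gamma_0)$, forcing both to be negative and the latter to tend to $-\infty$.
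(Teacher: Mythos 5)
Your proposal is correct, and for parts (i) and (iv)--(vi) it follows essentially the paper's route: boundedness of $W_\mu$ from $J_\mu(u)\geq\frac{N-\mu+2}{2(2N-\mu)}A(u)$ when $I_\mu(u)>0$, the small-ball inclusion via Hardy--Littlewood--Sobolev, ray-scaling to get the reverse inclusions for the closures, and non-attainment of $S_{H,L}$ on a bounded domain to exclude nonzero limits with $I_\mu=0$; your treatment of (v) is a slightly more quantitative variant (you show directly that $A(u)>S_{H,L}^{(2N-\mu)/(N-\mu+2)}$ on all of $V_\mu$, whereas the paper argues by contradiction using that small elements lie in $W_\mu$), and for (iii) you simply spell out the Choquard Pohozaev identity plus Hopf's lemma that the paper invokes as ``classical.'' The genuine difference is (ii). The paper connects $u_1,u_2\in V_\mu$ by first scaling them into the set $\{J_\mu<m_\mu\}\setminus B_r(0)$ (with $W_\mu\subset B_r(0)$) and then asserting, without proof, that this set is arcwise connected, so that any path in it stays in $V_\mu$; this assertion is the least transparent step of the paper's argument. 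You instead build an explicit three-segment path: outward ray segments $t\mapsto tu_i$, $t\in[1,T]$, which stay in $V_\mu$ because $A(u_i)<B(u_i)$ makes both $I_\mu(tu_i)$ and $\frac{d}{dt}J_\mu(tu_i)$ negative for $t\geq1$, joined at scale $T$ by $s\mapsto T\gamma_0(s)$ with $\gamma_0(s)=(1-s)u_1+su_2+s(1-s)\phi$, where $\phi\notin\mathrm{span}\{u_1,u_2\}$ guarantees $\gamma_0\neq0$, hence $B(\gamma_0(s))\geq c>0$ and $A(\gamma_0(s))\leq M$ on $[0,1]$, so that for $T$ large the terms $T^{2\cdot2^*_\mu}B$ dominate and force $I_\mu(T\gamma_0(s))<0$ and $J_\mu(T\gamma_0(s))<m_\mu$ uniformly in $s$. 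This construction is self-contained, verifies membership in $V_\mu$ pointwise along the path, and in effect supplies the detail the paper's sketch of (ii) leaves implicit; the paper's version is shorter but rests on the unproved connectedness claim. No gaps in your argument.
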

	\begin{proof}
		$(i)$ Obviously $0\in W_\mu$. Assume that $u\in W_{\mu}$ and $u\not\equiv0$. Then we have
		\begin{equation*}
			A(u)-B(u)>0.
		\end{equation*}
		Furthermore, we derive
		\begin{equation*}
			m_{\mu}>J_{\mu}(u)=\frac{1}{2}A(u)
			-\frac{1}{22^{\ast}_{\mu}}B(u)
			\geq\frac{2^{\ast}_{\mu}-1}{22^{\ast}_{\mu}}A(u),
		\end{equation*}
		which implies that
		\begin{equation*}
			A(u)\leq\frac{2(2N-\mu)}{N-\mu+2}m_{\mu},
		\end{equation*}
		from which we conclude that $W_\mu$ is bounded. In the other hand, let $u\in H^1_0(\Omega)$ with $\|u\|_{H^1_0(\Omega)}<<1$. Then from Lemma \ref{L-HLS}, it is easy to get $u\in W_\mu$. Furthermore, if $u\in W_\mu$, then $I_\mu(tu)>0$ and $tu\in W_\mu$ for $t\in [0,1]$. Therefore, $(i)$ holds.

		$(ii)$ Let $u_1,u_2\in V_\mu$ be arbitrary but fixed. By $(i)$, we know that there exists $r>0$ such that
		\begin{equation*}
			W_\mu\subset B_{r}(0):=\{u\in H^{1}_{0}(\Omega)\ |\ A(u)<r\}
		\end{equation*}
		and there exist $\alpha_1,\alpha_2\in [1,\infty)$ such that $\alpha_1u_1,\alpha_2u_2\in B^{c}_{r}(0)$, where
		\begin{equation*}
			B^{c}_{r}(0):=\left\lbrace u\in H^{1}_{0}(\Omega)\ |\ J_{\mu}(u)<m_{\mu}\right\rbrace -B_{r}(0).
		\end{equation*}
		Since $B^{c}_{r}(0)$ is a arcwise connected, we can connect $\alpha_1u_1$ with $\alpha_2u_2$ by path in $V_\mu$. Obviously, we can also connect $\alpha_1u_1$ with $u_1$ ($\alpha_2u_2$ and $u_2$ respectively) by paths in $V_\mu$ respectively, which implies that $V_\mu$ is aecwise connected.
		
		$(iii)$ By the classical Pohozaev identity and taking into account that $\Omega$ is star-shaped, we complete easily the proof of $(iii)$.
		% provided $\Omega$ is a star-shaped domain directly.
		
		$(iv)$ If $u\in \overline{W_{\mu}}\backslash\left\lbrace 0\right\rbrace $, then one has
		$J_{\mu}(u)\leq m_{\mu}$ and $I_{\mu}(u)\geq0$. It is easy to see that $J_{\mu}(u)\geq m_{\mu}$  provided $I_{\mu}(u)=0$. Then, we derive that $J_{\mu}(u)<m_{\mu}$ and $I_{\mu}(u)=0$ do not happen simultaneously.
		Next, since $S_{H,L}$ is not achieved when $\Omega\not=\mathbb{R}^{N}$, the case where $J_{\mu}(u)=m_{\mu}$ and $I_{\mu}(u)=0$ does not occur. Therefore, $\overline{W_{\mu}}\subset\left\{u\in H^{1}_{0}(\Omega)\ |\ J_{\mu}(u)\leq m_{\mu},\ I_{\mu}(u)>0\right\}\cup\left\lbrace 0\right\rbrace$. In the other hand, let $u\in H^1_0(\Omega)$ such that $J_{\mu}(u)=m_{\mu},\ I_{\mu}(u)>0$. Then , it is easy to see that for any $t<1$ and close to $1$, we have that $J_{\mu}(tu)<m_{\mu},\ I_{\mu}(tu)>0$. Taking a sequence $(t_n)_{n\in \mathbb N} $ such that $t_n\to 1^-$ and $u_n=t_nu$, we get $u\in \overline{W_{\mu}}$ and $(iv)$ holds.
		
		$(v)$ Similar to the argument of $(iv)$, we have
		\begin{equation*}
			\overline{V_{\mu}}=\left\{u\in H^{1}_{0}(\Omega)\ |\ J_{\mu}(u)\leq m_{\mu},\ I_{\mu}(u)<0\right\}.
		\end{equation*}
		Next, we prove that $0\not\in\overline{V_{\mu}}$. On the contrary, that is $0\in\overline{V_{\mu}}$. Then there exists a sequence $\left\lbrace u_{n}\right\rbrace\subset V_{\mu}$ such that $u_{n}\rightarrow0$ as $n\rightarrow\infty$ in $H^{1}_{0}(\Omega)$. It follows from  $(i)$ that $u_{n}\in W_{\mu}$ for $n$ sufficiently large, which contradicts the fact that $W_{\mu}\cap V_{\mu}=\emptyset$. Therefore,  statement$(v)$ is valid.
		
		$(vi)$ The argument is a direct consequence of $(iv)$ and $(v)$.\newline
		Consequently, the proof of Proposition \ref{P-Properties} is now completed.
	\end{proof}
	
	\begin{Rem}\label{R2}
		Suppose that $\Omega$ is a bounded domain in $\mathbb{R}^{N}$. Set
		\begin{equation*}
			W_{1}:=\left\{u\in H^{1}_{0}(\Omega)\ |\ J_{\mu}(u)<m_{\mu},\ B(u)<S^{\frac{2N-\mu}{N-\mu+2}}_{H,L}\right\}
		\end{equation*}
		and
		\begin{equation*}
			V_{1}:=\left\{u\in H^{1}_{0}(\Omega)\ |\ J_{\mu}(u)<m_{\mu},\ B(u)>S^{\frac{2N-\mu}{N-\mu+2}}_{H,L}\right\}.
		\end{equation*}
		Then $W_{\mu}=W_{1}$ and $V_{\mu}=V_{1}$.
	\end{Rem}
	
	Indeed, assume that $J_{\mu}(u)\leq m_{\mu}$ and
	\begin{equation}\label{a1}
		B(u)=S^{\frac{2N-\mu}{N-\mu+2}}_{H,L},
	\end{equation}
	then we derive that
	\begin{equation}\label{a2}
		A(u)\leq S^{\frac{2N-\mu}{N-\mu+2}}_{H,L}.
	\end{equation}
	Furthermore, it follows from (\ref{SHL}) and (\ref{a1})-(\ref{a2}) that
	\begin{equation*}
		S^{\frac{2N-\mu}{N-\mu+2}}_{H,L}
		=S_{H,L}\left(B(u)\right) ^{\frac{N-2}{2N-\mu}}
		\leq A(u)
		\leq S^{\frac{2N-\mu}{N-\mu+2}}_{H,L},
	\end{equation*}
	which implies that
	$A(u)=S_{H,L}\left(B(u)\right) ^{\frac{N-2}{2N-\mu}}$. Hence, $u=0$ provided $\Omega\not=\mathbb{R}^{N}$. Therefore, 
	\begin{equation}\label{a3}
		\left\{u\in H^{1}_{0}(\Omega)\ |\ J_{\mu}(u)<m_{\mu}\right\}
		=W_{1}\cup V_{1},
	\end{equation}
	and
	\begin{equation}\label{a4}
		W_{1}\cap V_{1}=\emptyset,
	\end{equation}
	from which we easily get that 
	\begin{equation}\label{W-V}
	W_{1}\cup V_{1}=W_\mu \cup V_\mu 
	\end{equation}
and $W_{1}$, $V_{1}$ are open.
	By the definition of $V_{1}$, for any $u\in V_{1}$, one has
	\begin{equation*}
		A(u)
		<\frac{N-\mu+2}{2N-\mu}S^{\frac{2N-\mu}{N-\mu+2}}_{H,L}+\frac{N-2}{2N-\mu}B(u)<B(u),
	\end{equation*}
	which implies that $u\in V_{\mu}$. Therefore, $V_{1}\subset V_{\mu}$.
	
	Furthermore, since $V_{\mu}$ is open connected
	%\textcolor{blue}{[J.G.: connectedness not so obvious, and is it necessary since $W_1\cup V_1=W_\mu\cup V_\mu$?]} 
	and combining with \eqref{W-V}
	\begin{equation}\label{a5}
		V_{\mu}=V_{1}\cup(W_{1}\cap V_{\mu}).
	\end{equation}
	By the definition of $W_{1}$ and (\ref{SHL}), for any $u\in W_{1}$, one has
	\begin{equation*}
		B(u)
		\leq(B(u))^{\frac{N-2}{2N-\mu}}S_{H,L}\leq A(u),
	\end{equation*}
	which implies that $W_{1}\cap V_{\mu}=\emptyset$. Therefore, we derive that $V_{\mu}=V_{1}$.
	
	On the other hand, by \eqref{W-V}, it is clear that
	\begin{equation}\label{a6}
		\left\{u\in H^{1}_{0}(\Omega)\ |\ J_{\mu}(u)<m_{\mu}\right\}
		=W_{\mu}\cup V_{\mu}\ \text{and}\ W_{\mu}\cap V_{\mu}=\emptyset.
	\end{equation}
	Then it follows from (\ref{a3})-(\ref{a6})  that $W_{\mu}=W_{1}$. The proof is now complete.$\hfill{} \Box$
	
	Next, we shall show the local well-posedness of problem \eqref{E} in $H^{1}_{0}(\Omega)$. Denote $A:=-\Delta$ with Dirichlet boundary conditions. Following \cite{Tanabe} (or see \cite{Hoshino-Yamada}), we define the fractional powers $A^{\gamma}$ with domain $D(A^{\gamma})$ of operator $A$ for all $\gamma\in(0,1)$ and the analytic semigroup of bounded linear operators $\left\lbrace e^{-tA}\right\rbrace _{t\geq0}$ generated by $-A$. Let
	\begin{equation}\label{Integral Equation}
		\Phi_{u_{0}}[u]:=e^{-tA}u_{0}+\int_{0}^{t}e^{-(t-\tau)A}\left\lbrace \left(\int_{\Omega}\frac{|u(y,\tau)|^{p}}{|x-y|^{\mu}}dy\right)|u(\tau)|^{2^{\ast}_{\mu}-2}u(\tau)\right\rbrace d\tau.
	\end{equation}
	And for any $K>0$, $\alpha,\, \beta\in (0,1)$ we set
	\begin{equation}\label{Space}
		\begin{aligned}
			Y_{T,K}:=\bigg\lbrace&u\in BC\Large((0,T];D(A^{\alpha})\cap D(A^{\beta})\Large), \\
			&\max\bigg\lbrace \sup_{t\in (0,T]}t^{\alpha-\frac{1}{2}}\Vert A^{\alpha}u(t)\Vert_{2}, \sup_{t\in (0,T]}t^{\beta-\frac{1}{2}}\Vert A^{\beta}u(t)\Vert_{2}\bigg\rbrace\leq K\bigg\rbrace
		\end{aligned}
	\end{equation}
	$BC((0,T];\, D(A^\alpha)\cap D(A^\beta))$ denotes the set of functions bounded and continuous from $(0,T]$ in $D(A^\alpha)\cap D(A^\beta)$. Now, to get existence of solutions to \eqref{E}, our task is reduced to applying the Fixed Point Theorem to the mapping $\Phi_{u_{0}}[u]$. More precisely, we prove that $\Phi_{u_{0}}[u]$ is a contraction mapping in $Y_{T,K}$ for a suitable $K>0$. For this purpose,
	we first recall the following classical properties of Dirichlet heat semigroup and give some estimates. In the sequel, $\lambda$ denotes any fixed real number such that $0<\lambda<\lambda_1(\Omega):=\inf\{\|\nabla v\|_{L^2(\Omega)},\; v\in H^1_0(\Omega),\;\|v\|_{L^2(\Omega)}=1\}$.
	
	\begin{lem}\label{L-Heat semigroup properties1}
		{\rm \cite[Lemma 2.1]{Hoshino-Yamada}} Suppose that $X$ be a Banach space with norm $\Vert \cdot\Vert_{X}$. For each $\gamma\geq0$, there exists a positive constant $C(\gamma)>0$ such that
		\begin{equation*}
			\Vert A^{\gamma}e^{-tA}w\Vert_{X}
			\leq C(\gamma)t^{-\gamma}e^{-\lambda t}\Vert w\Vert_{X}
		\end{equation*}
		for all $w\in X$ and $t>0$.
		Moreover, for each $\gamma>0$, there holds
		\begin{equation*}
			t^{\gamma}\Vert A^{\gamma}e^{-tA}w\Vert_{X}\rightarrow0\ \text{as}\ t\rightarrow 0^+
		\end{equation*}
		for all $w\in X$.
	\end{lem}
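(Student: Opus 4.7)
This is a classical statement about analytic semigroups generated by a sectorial operator; in our setting $A=-\Delta$ with Dirichlet boundary conditions is self-adjoint and strictly positive on $L^2(\Omega)$ (with spectrum bounded below by the first eigenvalue $\lambda_1(\Omega)>\lambda$), and the estimates extend to the Banach subspace $X$ via the general theory of fractional powers. My plan is to split the lemma into its two assertions and to reduce each to a standard analytic-semigroup fact plus the spectral gap $\lambda_1(\Omega)>\lambda>0$.

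\textbf{Step 1 (Power--semigroup bound).} I would first invoke the well-known estimate $\|A^{\gamma}e^{-tA}\|_{\cl(X)}\le M_{\gamma}t^{-\gamma}$ for $t>0$, which follows either from the Dunford contour-integral representation of $A^{\gamma}e^{-tA}$ along a sector avoiding $\sigma(A)$, or, on $L^2$, from spectral calculus. To upgrade this to an exponentially decaying bound, I split $e^{-tA}=e^{-(t/2)A}\,e^{-(t/2)A}$ and apply the above to the first factor, giving $\|A^{\gamma}e^{-(t/2)A}\|_{\cl(X)}\le C(\gamma)\,t^{-\gamma}$, while for the second factor I use the spectral gap to obtain $\|e^{-(t/2)A}\|_{\cl(X)}\le Ce^{-\lambda_1(\Omega)t/2}\le Ce^{-\lambda t}$ (after absorbing the gap $\lambda_1(\Omega)-\lambda>0$ into the constant). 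Composing yields
\[
\|A^{\gamma}e^{-tA}w\|_X\le C(\gamma)\,t^{-\gamma}e^{-\lambda t}\|w\|_X.
\]
Equivalently, in the self-adjoint picture, it suffices to check $\sup_{s\ge\lambda_1(\Omega)}\{s^{\gamma}e^{-ts}\}\le C(\gamma)\,t^{-\gamma}e^{-\lambda t}$, a direct one-variable calculus exercise after writing $e^{-ts}=e^{-(\lambda_1(\Omega)-\lambda)t}\cdot e^{-\lambda t}\cdot e^{-(t-t')s}$ for an appropriate $t'$.

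\textbf{Step 2 (Vanishing as $t\to 0^+$).} I would argue by density. For $w\in D(A^{\gamma})$, the operator $A^{\gamma}$ commutes with the semigroup, so
\[
t^{\gamma}\|A^{\gamma}e^{-tA}w\|_X=t^{\gamma}\|e^{-tA}A^{\gamma}w\|_X\le Ct^{\gamma}\|A^{\gamma}w\|_X\longrightarrow 0\quad\text{as } t\to 0^+.
\]
For general $w\in X$ I would approximate $w$ by elements $w_n\in D(A^{\gamma})$ (density follows from the strong continuity of $\{e^{-tA}\}$ on $X$, since $e^{-sA}w\in\bigcap_{\gamma\ge 0}D(A^{\gamma})$ for every $s>0$ and $e^{-sA}w\to w$ as $s\to 0^+$), and then use the uniform bound $t^{\gamma}\|A^{\gamma}e^{-tA}\|_{\cl(X)}\le C(\gamma)$ supplied by Step 1 to split
\[
t^{\gamma}\|A^{\gamma}e^{-tA}w\|_X\le C(\gamma)\|w-w_n\|_X+t^{\gamma}\|A^{\gamma}e^{-tA}w_n\|_X,
\]
choosing $n$ large and then $t$ small to finish.

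\textbf{Main obstacle.} The serious point is not the algebra but the functional-analytic setup: one needs the constants $C(\gamma)$, the Dunford/spectral representation of $A^{\gamma}$, and the density of $D(A^{\gamma})$ to be valid in the abstract Banach space $X$ rather than only on $L^2(\Omega)$. This requires that $-A$ generate a bounded analytic $C_0$-semigroup on $X$ (which holds for the Dirichlet Laplacian on standard spaces such as $L^p(\Omega)$ and the fractional domains used later in the paper), and the verification of this framework is the one non-routine part of the argument; once granted, the two assertions reduce to the calculus estimates sketched above.
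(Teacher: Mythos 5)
The paper gives no proof of this lemma: it is quoted directly from Hoshino--Yamada \cite[Lemma 2.1]{Hoshino-Yamada}, so there is no internal argument to compare yours against. Your proposal is the standard analytic-semigroup/spectral-calculus proof and is essentially sound: the smoothing estimate $\Vert A^{\gamma}e^{-tA}\Vert\leq M_{\gamma}t^{-\gamma}$, the spectral gap $0<\lambda<\lambda_{1}(\Omega)$, and the density argument via $e^{-sA}w\in D(A^{\gamma})$ and strong continuity give both assertions, and you correctly identify the one real hypothesis, namely that $-A$ generates a bounded analytic $C_{0}$-semigroup on $X$ with spectrum to the right of $\lambda$, which is exactly the framework of the cited reference.

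One bookkeeping slip in Step 1 should be repaired. With the symmetric splitting $e^{-tA}=e^{-(t/2)A}e^{-(t/2)A}$ the second factor only yields $e^{-\lambda_{1}(\Omega)t/2}$, and the inequality $e^{-\lambda_{1}(\Omega)t/2}\leq Ce^{-\lambda t}$ fails as $t\to\infty$ whenever $\lambda>\lambda_{1}(\Omega)/2$; since the paper allows any $\lambda\in(0,\lambda_{1}(\Omega))$, the gap cannot simply be ``absorbed into the constant.'' The fix is to split unevenly, $e^{-tA}=e^{-\theta tA}e^{-(1-\theta)tA}$ with $\theta\in(0,1)$ chosen so that $(1-\theta)\lambda_{1}(\Omega)\geq\lambda$, which costs only a $\theta$-dependent constant in the $t^{-\gamma}$ factor. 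Equivalently, your spectral-calculus alternative is the cleanest route on $L^{2}$, provided the elementary estimate is organized as $s^{\gamma}e^{-ts}=\left(s^{\gamma}e^{-\delta ts}\right)e^{-(1-\delta)ts}\leq\left(\tfrac{\gamma}{e\delta}\right)^{\gamma}t^{-\gamma}e^{-\lambda t}$ for $s\geq\lambda_{1}(\Omega)$ with $\delta=1-\lambda/\lambda_{1}(\Omega)$; the particular factorization you wrote down at the end of Step 1 is garbled, but the idea is the right one. Step 2 (density plus the uniform bound $t^{\gamma}\Vert A^{\gamma}e^{-tA}\Vert\leq C(\gamma)$) is fine as written.
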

	\begin{lem}\label{L-Heat semigroup properties2}
		{\rm \cite[Proposition 48.4$^{\ast}$]{Quittner-Souplet}}
		Let $N\geq 1$. Then for any $1\leq r_{1}<r_{2}\leq \infty$, there holds
		\begin{equation*}
			\Vert e^{t\Delta}w\Vert_{r_{2}}
			\leq (4\pi t)^{-\frac{N}{2}(\frac{1}{r_{1}}-\frac{1}{r_{2}})}\Vert w\Vert_{r_{1}}
		\end{equation*}
		for all $t>0$ and $w\in L^{r_{1}}(\Omega)$.
	\end{lem}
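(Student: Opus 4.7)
The plan is to reduce the Dirichlet heat semigroup estimate on the bounded domain $\Omega$ to a convolution estimate on $\R^N$ and then invoke Young's inequality. The key tool is the pointwise comparison between the Dirichlet heat kernel $G_\Omega(t,x,y)$ on $\Omega$ and the free Gaussian kernel
\begin{equation*}
G(t,z)=(4\pi t)^{-\frac{N}{2}}e^{-\frac{|z|^{2}}{4t}},
\end{equation*}
namely $0\leq G_\Omega(t,x,y)\leq G(t,x-y)$ for every $t>0$ and $x,y\in\Omega$. This is a classical consequence of the parabolic maximum principle applied to $(t,x)\mapsto G(t,x-y)-G_\Omega(t,x,y)$, which solves the heat equation in $\Omega$, is nonnegative on $\partial\Omega\times(0,\infty)$, and matches the same Dirac-mass initial datum at $y$ in the sense of distributions.

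Given $w\in L^{r_{1}}(\Omega)$, I extend it by zero to $\tilde w\in L^{r_{1}}(\R^{N})$, so that $\|\tilde w\|_{L^{r_{1}}(\R^{N})}=\|w\|_{L^{r_{1}}(\Omega)}$. The kernel comparison then yields, for every $x\in\Omega$,
\begin{equation*}
|(e^{t\Delta}w)(x)|=\Bigl|\int_{\Omega}G_\Omega(t,x,y)w(y)\,dy\Bigr|\leq\int_{\R^{N}}G(t,x-y)|\tilde w(y)|\,dy=(G(t,\cdot)*|\tilde w|)(x).
\end{equation*}
Introducing the Young exponent $s\in[1,\infty]$ defined by $1+\tfrac{1}{r_{2}}=\tfrac{1}{s}+\tfrac{1}{r_{1}}$ and applying the classical convolution inequality on $\R^{N}$, I obtain
\begin{equation*}
\|e^{t\Delta}w\|_{L^{r_{2}}(\Omega)}\leq\|G(t,\cdot)*|\tilde w|\|_{L^{r_{2}}(\R^{N})}\leq\|G(t,\cdot)\|_{L^{s}(\R^{N})}\,\|w\|_{L^{r_{1}}(\Omega)}.
\end{equation*}

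It only remains to evaluate the Gaussian norm. A direct change-of-variables computation gives
\begin{equation*}
\|G(t,\cdot)\|_{L^{s}(\R^{N})}=(4\pi t)^{-\frac{N}{2}(1-\frac{1}{s})}\,s^{-\frac{N}{2s}}=(4\pi t)^{-\frac{N}{2}(\frac{1}{r_{1}}-\frac{1}{r_{2}})}\,s^{-\frac{N}{2s}},
\end{equation*}
where in the second equality I used the Young constraint $1-\tfrac{1}{s}=\tfrac{1}{r_{1}}-\tfrac{1}{r_{2}}$. The elementary observation that $s^{1/s}\geq 1$ for every $s\in[1,\infty]$ ensures $s^{-N/(2s)}\leq 1$, and chaining the three estimates delivers the stated bound. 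The endpoint cases are consistent with the formula: $\|G(t,\cdot)\|_{L^{1}(\R^{N})}=1$ for $s=1$ and $\|G(t,\cdot)\|_{L^{\infty}(\R^{N})}=(4\pi t)^{-N/2}$ for $s=\infty$.

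The only genuinely delicate ingredient is the kernel comparison $G_\Omega\leq G$, which requires the smoothness of $\partial\Omega$ so that $G_\Omega$ is classical up to the boundary, and the parabolic maximum principle. Once that comparison is in place, no spectral information on $\Omega$ is needed and the argument is a standard convolution computation; the same proof applies verbatim to arbitrary open subsets of $\R^{N}$.
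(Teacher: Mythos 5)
Your argument is correct. The paper does not prove this lemma at all; it is quoted verbatim from \cite[Proposition~48.4$^{\ast}$]{Quittner-Souplet}, and your proof is essentially the standard argument behind that cited result: pointwise domination of the Dirichlet heat kernel by the free Gaussian, extension by zero, Young's convolution inequality, and the explicit computation $\Vert G(t,\cdot)\Vert_{L^{s}(\mathbb{R}^{N})}=(4\pi t)^{-\frac{N}{2}(1-\frac{1}{s})}s^{-\frac{N}{2s}}\leq(4\pi t)^{-\frac{N}{2}(\frac{1}{r_{1}}-\frac{1}{r_{2}})}$, which even yields a slightly better constant than stated. One minor remark: the kernel domination $G_{\Omega}\leq G$ does not actually require smoothness of $\partial\Omega$ (it holds for arbitrary open sets, e.g.\ by domination of semigroups or approximation), so your closing caveat is unnecessary and is in mild tension with your own final sentence, but this does not affect the validity of the proof.
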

	
	Letting $\theta>0$ be an arbitrary and fixed constant, we define:
	\begin{equation}\label{Index}
		\alpha:=\frac{1}{2}+\frac{N-2}{2^{\theta}}\ \text{and}\
		\beta:=\frac{2^{\theta-1}(2N-\mu)-(N-2)(2N-\mu)}{2^{\theta}(N-\mu+2)}.
	\end{equation}
	Choosing $\theta>0$ large enough such that $\alpha>\frac{1}{2}$ and close to $\frac{1}{2}$. Clearly, $\alpha,\beta\in (\frac{1}{2},1)$ provided $N>2$ and $0<\mu<\min\left\lbrace N,4\right\rbrace $.
	Set
	\begin{equation*}
		f(u(t)):=\left(\int_{\Omega}\frac{|u(y,t)|^{2^{\ast}_{\mu}}}{|x-y|^{\mu}}dy\right)|u(t)|^{2^{\ast}_{\mu}-2}u(t).
	\end{equation*}
	Then, we have the following estimates for the nonlinear term $f(u(t))$.
	
	\begin{lem}\label{L-Nonlinear estimate}
		Assume that $0<\mu<\min\left\lbrace N,4\right\rbrace $ and $\alpha,\beta$ as in (\ref{Index}). Then for any $u,v\in Y_{T,K}$ , we have
		\begin{equation}\label{nonlinear-estimate1}
			\left\Vert \int_{0}^{t}A^{\gamma}e^{-(t-\tau)A}f(u(\tau))d\tau\right\Vert_{2}\leq Ct^{1-\gamma-(2^{\ast}_{\mu}-1)(\beta-\frac{1}{2})-2^{\ast}_{\mu}(\alpha-\frac{1}{2})}K^{22^{\ast}_{\mu}-1}
		\end{equation}
		and
		\begin{equation}\label{nonlinear-estimate2}
			\begin{aligned}
				\bigg\Vert \int_{0}^{t}A^{\gamma}e^{-(t-\tau)A}&(f(u(\tau))-f(v(\tau)))d\tau\bigg\Vert_{2}\\
				 \leq&Ct^{1-\gamma-(2^{\ast}_{\mu}-1)(\beta-\frac{1}{2})-2^{\ast}_{\mu}(\alpha-\frac{1}{2})}(2K)^{2(2^{\ast}_{\mu}-1)}\Vert\vert u-v\Vert\vert_{Y_{K,T}}
			\end{aligned}
		\end{equation}
		for some $C>0$.
	\end{lem}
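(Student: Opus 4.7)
The plan is to reduce both \eqref{nonlinear-estimate1} and \eqref{nonlinear-estimate2} to an $L^2$ bound on the nonlinearity (respectively on a difference of nonlinearities) and to chain together three ingredients already at our disposal: the analytic-semigroup bound $\|A^{\gamma}e^{-sA}w\|_{2}\le Cs^{-\gamma}\|w\|_{2}$ from Lemma~\ref{L-Heat semigroup properties1}, the Hardy--Littlewood--Sobolev inequality for the Riesz convolution $|x|^{-\mu}\ast|u|^{2^{*}_{\mu}}$, and the Sobolev embedding $D(A^{\gamma})\hookrightarrow L^{q_{\gamma}}$ with $\tfrac{1}{q_{\gamma}}=\tfrac{1}{2}-\tfrac{2\gamma}{N}$, which is what allows one to read an $L^{q}$-norm of $u$ off $\|A^{\alpha}u\|_{2}$ or $\|A^{\beta}u\|_{2}$. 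The final $t$-integration is a standard Beta-function computation $\int_{0}^{t}(t-\tau)^{-\gamma}\tau^{-a}\,d\tau=B(1-\gamma,1-a)\,t^{1-\gamma-a}$, which explains why the exponent on $t$ in \eqref{nonlinear-estimate1}--\eqref{nonlinear-estimate2} has the form $1-\gamma-a$.

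For \eqref{nonlinear-estimate1} the core is to prove
\[
\|f(u(\tau))\|_{2}\le C\,K^{22^{*}_{\mu}-1}\,\tau^{-2^{*}_{\mu}(\alpha-\frac{1}{2})-(2^{*}_{\mu}-1)(\beta-\frac{1}{2})}.
\]
I would apply H\"older with exponents $(p_{1},p_{2})$ satisfying $\tfrac{1}{p_{1}}+\tfrac{1}{p_{2}}=\tfrac{1}{2}$ to split $f(u)=\bigl(|x|^{-\mu}\ast|u|^{2^{*}_{\mu}}\bigr)\cdot|u|^{2^{*}_{\mu}-1}$, then apply HLS to the first factor to get $C\|u\|_{s\cdot 2^{*}_{\mu}}^{2^{*}_{\mu}}$ with $\tfrac{1}{s}=1+\tfrac{1}{p_{1}}-\tfrac{\mu}{N}$, and write the second factor as $\|u\|_{p_{2}(2^{*}_{\mu}-1)}^{2^{*}_{\mu}-1}$. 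The definitions \eqref{Index} are calibrated precisely so that $s\cdot 2^{*}_{\mu}=q_{\alpha}$ and $p_{2}(2^{*}_{\mu}-1)=q_{\beta}$; a direct computation then yields the algebraic identity $2^{*}_{\mu}\alpha+(2^{*}_{\mu}-1)\beta=2^{*}_{\mu}$ that makes the H\"older exponents close, together with $2^{*}_{\mu}(\alpha-\tfrac{1}{2})+(2^{*}_{\mu}-1)(\beta-\tfrac{1}{2})=\tfrac{1}{2}$, which produces the claimed decay rate in $\tau$. The $Y_{T,K}$ control $\|A^{\gamma}u(\tau)\|_{2}\le K\tau^{-(\gamma-\frac{1}{2})}$ for $\gamma\in\{\alpha,\beta\}$ supplies the $K^{22^{*}_{\mu}-1}$ prefactor, and integrating against $(t-\tau)^{-\gamma}$ gives \eqref{nonlinear-estimate1}.

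For \eqref{nonlinear-estimate2} I would split
\[
f(u)-f(v)=\bigl(|x|^{-\mu}\ast(|u|^{2^{*}_{\mu}}-|v|^{2^{*}_{\mu}})\bigr)|u|^{2^{*}_{\mu}-2}u+\bigl(|x|^{-\mu}\ast|v|^{2^{*}_{\mu}}\bigr)\bigl(|u|^{2^{*}_{\mu}-2}u-|v|^{2^{*}_{\mu}-2}v\bigr)
\]
and use the pointwise mean-value bounds $\bigl||u|^{2^{*}_{\mu}}-|v|^{2^{*}_{\mu}}\bigr|\lesssim(|u|+|v|)^{2^{*}_{\mu}-1}|u-v|$ and $\bigl||u|^{2^{*}_{\mu}-2}u-|v|^{2^{*}_{\mu}-2}v\bigr|\lesssim(|u|+|v|)^{2^{*}_{\mu}-1}|u-v|\cdot|u+v|^{-1}$ together with the usual convexity replacement $|u|+|v|\le 2\max\{|u|,|v|\}$ (the restriction $0<\mu<\min\{N,4\}$ guarantees $2^{*}_{\mu}>2$ so the exponents are admissible). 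Plugging these pointwise bounds into the same H\"older--HLS--Sobolev scheme replaces one factor of a $Y_{T,K}$-norm of $u$ or $v$ by $\|u-v\|_{q_{\alpha}}$ or $\|u-v\|_{q_{\beta}}$, each of which is bounded by $\tau^{-(\alpha-\frac{1}{2})}\|\!|u-v|\!\|_{Y_{T,K}}$ or $\tau^{-(\beta-\frac{1}{2})}\|\!|u-v|\!\|_{Y_{T,K}}$; the remaining $2(2^{*}_{\mu}-1)$ factors are bounded by $(2K)^{2(2^{*}_{\mu}-1)}$, and the same Beta-function integration closes the argument.

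The main obstacle is the bookkeeping of exponents rather than any deep analytical ingredient: one must check that the pair $(p_{1},p_{2})$ produced by the constraints $s\cdot 2^{*}_{\mu}=q_{\alpha}$ and $p_{2}(2^{*}_{\mu}-1)=q_{\beta}$ really satisfies $\tfrac{1}{p_{1}}+\tfrac{1}{p_{2}}=\tfrac{1}{2}$, that $q_{\alpha},q_{\beta}\in(2,\infty)$, and that the HLS and Sobolev exponents fall in the admissible open ranges. The freedom to enlarge $\theta$ --- which pushes $\alpha,\beta$ arbitrarily close to $\tfrac{1}{2}$ from above --- is exactly what makes this verification work, and the constraint $0<\mu<\min\{N,4\}$ is precisely what is needed to keep $\beta<1$ and the Sobolev exponents finite.
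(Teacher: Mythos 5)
Your proposal follows essentially the same route as the paper: the same H\"older--Hardy--Littlewood--Sobolev--Sobolev-embedding chain reducing $\Vert f(u(\tau))\Vert_{2}$ to $\Vert A^{\alpha}u(\tau)\Vert_{2}^{2^{\ast}_{\mu}}\Vert A^{\beta}u(\tau)\Vert_{2}^{2^{\ast}_{\mu}-1}$, the same two-term splitting with mean-value bounds for $f(u)-f(v)$, the same key identity $2^{\ast}_{\mu}(\alpha-\tfrac{1}{2})+(2^{\ast}_{\mu}-1)(\beta-\tfrac{1}{2})=\tfrac{1}{2}$ coming from \eqref{Index}, and the same Beta-function time integration, so the approach is correct. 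The only blemish is your second pointwise bound: the factor $|u+v|^{-1}$ renders it useless where $u+v$ vanishes while $|u|,|v|$ do not; the inequality you actually need (and implicitly use in your factor count) is the standard $\bigl||u|^{2^{\ast}_{\mu}-2}u-|v|^{2^{\ast}_{\mu}-2}v\bigr|\lesssim(|u|+|v|)^{2^{\ast}_{\mu}-2}|u-v|$, valid since $2^{\ast}_{\mu}>2$ when $\mu<4$, which is exactly the mean-value estimate the paper employs.
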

	\begin{proof}
		First, we prove \eqref{nonlinear-estimate1}. From the Hardy-Littlewood-Sobolev inequality (see Lemma \ref{L-HLS}) and a duality argument, we have that 
			\begin{equation}\label{dual HLS}
				\left\Vert \int_{\Omega}\frac{\bar{f}(y)}{|x-y|^{\mu}}dy\right\Vert_{s}\leq C\Vert \bar{f}\Vert_{s_{1}}
			\end{equation}
		where $\frac{1}{s_{1}}=\frac{N-\mu}{N}+\frac{1}{s}$.
			Then, from the H\"older inequality, it follows that
		\begin{equation}\label{b1}
			\begin{aligned}
				\Vert f(&u(t))\Vert^{2}_{2}=\int_{\Omega}\left\vert \left(\int_{\Omega}\frac{|u(y,t)|^{2^{\ast}_{\mu}}}{|x-y|^{\mu}}dy\right)|u(t)|^{2^{\ast}_{\mu}-2}u(t)\right\vert^{2}dx\\
				\leq&\left\Vert \int_{\Omega}\frac{|u(y,t)|^{2^{\ast}_{\mu}}}{|x-y|^{\mu}}dy\right\Vert^{2}_{2q}\left( \int_{\Omega}\left\vert |u(t)|^{2^{\ast}_{\mu}-2}u(t)\right\vert^{2q^{\prime}}dx\right) ^{1/q^{\prime}}
				\leq C\Vert u(t)\Vert^{22^{\ast}_{\mu}}_{2^{\ast}_{\mu}r}
				\Vert u(t)\Vert^{2(2^{\ast}_{\mu}-1)}_{2(2^{\ast}_{\mu}-1)q^{\prime}},
			\end{aligned}
		\end{equation}
		where $q, q^{\prime}\in(1,+\infty)$ are conjugate and $r\in(1,+\infty)$ satisfies
		\begin{equation}\label{b2}
			\frac{1}{r}=\frac{1}{2q}+\frac{N-\mu}{N}.
		\end{equation}
		Taking
		\begin{equation}\label{q-r}
			q=\frac{2^{\theta-2}N}{2^{\theta-2}\mu-(2N-\mu)},\ q^{\prime}=\frac{2^{\theta-2}N}{2^{\theta-2}(N-\mu)+(2N-\mu)},\ r=\frac{2N}{2N-\mu}\frac{2^{\theta-2}}{2^{\theta-2}-1},
		\end{equation}
		obviously, $q, q^{\prime}\in(1,+\infty)$ are conjugate and $r\in(1,+\infty)$ satisfies (\ref{b2}).
		From (2) in \cite[Proposition 4.1]{Badra-Bal-Giacomoni} (see also  \cite{Henry} for further properties of fractional powers of sectorial operators) and since
		\begin{equation*}
			2^{\ast}_{\mu}r=\frac{2N}{N-4\alpha}\ \text{and}\ 2(2^{\ast}_{\mu}-1)q^{\prime}=\frac{2N}{N-4\beta},
		\end{equation*}
		together with the Sobolev Imbedding Theorem (see \cite{Henry}), one has
		\begin{equation}\label{b3}
			\Vert u(t)\Vert_{2^{\ast}_{\mu}r}\leq C\Vert A^{\alpha}u(t)\Vert_{2},\ \text{for all}\ u(t)\in D(A^{\alpha})
		\end{equation}
		and
		\begin{equation}\label{b4}
			\Vert u(t)\Vert_{2(2^{\ast}_{\mu}-1)q^{\prime}}\leq C\Vert A^{\beta}u(t)\Vert_{2},\ \text{for all}\ u(t)\in D(A^{\beta}).
		\end{equation}
		Since for $\theta>1$ large, $2^*_{\mu}(\alpha-\frac{1}{2})$ and $(2^*_{\mu}-1)(\beta-\frac{1}{2})$ are respectively close to $0$ and $\frac{1}{2}$,  it follows from Lemma \ref{L-Heat semigroup properties1}, (\ref{b1}) and (\ref{b3})-(\ref{b4}) that
		\begin{equation}\nonumber
			\begin{aligned}
				\left\Vert \int_{0}^{t}A^{\gamma}e^{-(t-\tau)A}f(u(\tau))d\tau\right\Vert_{2}\leq& C\int_{0}^{t}(t-\tau)^{-\gamma}e^{-\lambda(t-\tau)}\Vert f(u(\tau))\Vert_{2}d\tau\\
				\leq&C\int_{0}^{t}(t-\tau)^{-\gamma}\Vert A^{\alpha}u(\tau)\Vert^{2^{\ast}_{\mu}}_{2}\Vert A^{\beta}u(\tau)\Vert^{2^{\ast}_{\mu}-1}_{2}d\tau \\
				 \leq&C\int_{0}^{t}(t-\tau)^{-\gamma}\tau^{-(2^{\ast}_{\mu}-1)(\beta-\frac{1}{2})-2^{\ast}_{\mu}(\alpha-\frac{1}{2})}\Vert\vert u\Vert\vert_{Y_{T,K}}^{22^{\ast}_{\mu}-1}d\tau\\
				 \leq&Ct^{1-\gamma-(2^{\ast}_{\mu}-1)(\beta-\frac{1}{2})-2^{\ast}_{\mu}(\alpha-\frac{1}{2})}K^{22^{\ast}_{\mu}-1}
			\end{aligned}
		\end{equation}
		for $u\in Y_{T,K}$, where $C's$ are different constants from line to line.
		Consequently, the proof of (\ref{nonlinear-estimate1}) is complete.

		Next, we prove (\ref{nonlinear-estimate2}).
		Note that
		\begin{equation}\label{b11}
			\begin{aligned}
				\Vert f(u(t))-&f(v(t))\Vert^{2}_{2}
				\leq\int_{\Omega}\bigg\vert \left(\int_{\Omega}\frac{|u(y,t)|^{2^{\ast}_{\mu}}}{|x-y|^{\mu}}dy\right)\left( |u(t)|^{2^{\ast}_{\mu}-2}u(t)-|v(t)|^{2^{\ast}_{\mu}-2}(t)v\right)\\
				&+\left(\int_{\Omega}\frac{|u(y,t)|^{2^{\ast}_{\mu}}-|v(y,t)|^{2^{\ast}_{\mu}}}{|x-y|^{\mu}}dy \right)|v(t)|^{2^{\ast}_{\mu}-2}v(t)\bigg\vert^{2}dx\\
				&\leq2\int_{\Omega}\left\vert \left(\int_{\Omega}\frac{|u(y,t)|^{2^{\ast}_{\mu}}}{|x-y|^{\mu}}dy\right)\left( |u(t)|^{2^{\ast}_{\mu}-2}u(t)-|v(t)|^{2^{\ast}_{\mu}-2}v(t)\right)\right\vert^{2}dx\\
				&+2\int_{\Omega}\left\vert\left(\int_{\Omega}\frac{|u(y,t)|^{2^{\ast}_{\mu}}-|v(y,t)|^{2^{\ast}_{\mu}}}{|x-y|^{\mu}}dy \right)|v(t)|^{2^{\ast}_{\mu}-2}v(t)\right\vert^{2}dx
				=:2(I_{1}+I_{2}).
			\end{aligned}
		\end{equation}
		Similar to the proof of (\ref{b1}), it follows from the H\"older inequality, the Hardy-Littlewood-Sobolev inequality and the Mean Value Theorem that
		\begin{equation}\label{b12}
			\begin{aligned}
				I_{1}
				\leq&C\left\Vert \int_{\Omega}\frac{|u(y,t)|^{2^{\ast}_{\mu}}}{|x-y|^{\mu}}dy\right\Vert^{2}_{2q}\left( \int_{\Omega}\left\vert |\xi|^{2^{\ast}_{\mu}-2}(u(t)-v(t))\right\vert^{2q^{\prime}}dx\right) ^{1/q^{\prime}}\\
				\leq&C\Vert u(t)\Vert^{22^{\ast}_{\mu}}_{2^{\ast}_{\mu}r}\left( \int_{\Omega}|\xi|^{2(2^{\ast}_{\mu}-2)q^{\prime}\frac{2^{\ast}_{\mu}-1}{2^{\ast}_{\mu}-2}}dx\right) ^{\frac{2^{\ast}_{\mu}-2}{{(2^{\ast}_{\mu}-1)q^{\prime}}}}
				\left( \int_{\Omega}\vert u(t)-v(t)\vert ^{2q^{\prime}(2^{\ast}_{\mu}-1)}dx\right) ^{\frac{1}{(2^{\ast}_{\mu}-1)q^{\prime}}}\\
				\leq&C\Vert u(t)\Vert^{22^{\ast}_{\mu}}_{2^{\ast}_{\mu}r}
				\left( \Vert u(t)\Vert_{2(2^{\ast}_{\mu}-1)q^{\prime}}+\Vert v(t)\Vert_{2(2^{\ast}_{\mu}-1)q^{\prime}}\right)  ^{2(2^{\ast}_{\mu}-2)}\Vert u(t)-v(t)\Vert_{2(2^{\ast}_{\mu}-1)q^{\prime}} ^{2},
			\end{aligned}
		\end{equation}
		where $\xi=\xi(x,t)$ is a real real number between $u(x,t)$ and $v(x,t)$ and $q, q^{\prime},r$ are given in (\ref{q-r})
		and $C's$ are different constants from one line to another.
		Similarly, we also have
		\begin{equation}\label{b14}
			\begin{aligned}
				I_{2}
				\leq&\left( \int_{\Omega}\left\vert\int_{\Omega}\frac{|u(y,t)|^{2^{\ast}_{\mu}}-|v(y,t)|^{2^{\ast}_{\mu}}}{|x-y|^{\mu}}dy \right\vert^{2q}dx\right)^{1/q}
				\left( \int_{\Omega}\left\vert|v(t)|^{2^{\ast}_{\mu}-2}v(t) \right\vert^{2q\prime}dx\right)^{1/q^{\prime}}\\
				\leq&C\Vert v(t)\Vert^{2(2^{\ast}_{\mu}-1)}_{2(2^{\ast}_{\mu}-1)q^{\prime}}\left(\Vert u(t)\Vert_{2^{\ast}_{\mu}r}+\Vert v(t)\Vert_{2^{\ast}_{\mu}r}\right) ^{2(2^{\ast}_{\mu}-1)}\Vert u(t)-v(t)\Vert^{2}_{2^{\ast}_{\mu}r}.
			\end{aligned}
		\end{equation}
		Hence, it follows from (\ref{b11})-(\ref{b14}) that
		\begin{equation}\label{b15}
			\begin{aligned}
				\Vert f(u(t))-&f(v(t))\Vert_{2}\\
				\leq&C\Vert u(t)\Vert^{2^{\ast}_{\mu}}_{2^{\ast}_{\mu}r}
				\left( \Vert u(t)\Vert_{2(2^{\ast}_{\mu}-1)q^{\prime}}+\Vert v(t)\Vert_{2(2^{\ast}_{\mu}-1)q^{\prime}}\right)  ^{2^{\ast}_{\mu}-2}\Vert u(t)-v(t)\Vert_{2(2^{\ast}_{\mu}-1)q^{\prime}}\\
				&+C\Vert v(t)\Vert^{2^{\ast}_{\mu}-1}_{2(2^{\ast}_{\mu}-1)q^{\prime}}\left(\Vert u(t)\Vert_{2^{\ast}_{\mu}r}+\Vert v(t)\Vert_{2^{\ast}_{\mu}r}\right) ^{2^{\ast}_{\mu}-1}\Vert u(t)-v(t)\Vert_{2^{\ast}_{\mu}r}.
			\end{aligned}
		\end{equation}
		Furthermore, by  (\ref{b3})-(\ref{b4}) and (\ref{b15}), we derive that
		\begin{equation}\label{b16}
			\begin{aligned}
				\Vert f(u(t))-&f(v(t))\Vert_{2}\\
				\leq&C\Vert A^{\alpha}u(t)\Vert^{2^{\ast}_{\mu}}_{2}
				\left( \Vert A^{\beta}u(t)\Vert_{2}+\Vert A^{\beta}v(t)\Vert_{2}\right)  ^{2^{\ast}_{\mu}-2}\Vert A^{\beta}(u(t)-v(t))\Vert_{2}\\
				&+C\Vert A^{\beta}v(t)\Vert^{2^{\ast}_{\mu}-1}_{2}\left(\Vert A^{\alpha}u(t)\Vert_{2}+\Vert A^{\alpha}v(t)\Vert_{2}\right) ^{2^{\ast}_{\mu}-1}\Vert A^{\alpha}(u(t)-v(t))\Vert_{2}.
			\end{aligned}
		\end{equation}
		Therefore, by Lemma \ref{L-Heat semigroup properties1} and (\ref{b16}), we obtain
		\begin{equation}\nonumber
			\begin{aligned}
				\bigg\Vert\int_{0}^{t}A^{\gamma}&e^{-(t-\tau)A}(  f(u(\tau))-f(v(\tau)))d\tau\bigg\Vert_{2}
				\leq C\int_{0}^{t}(t-\tau)^{-\gamma}e^{-\lambda (t-\tau)}\Vert f(u(\tau))-f(v(\tau))\Vert_{2}d\tau\\
				\leq&C\int_{0}^{t}(t-\tau)^{-\gamma}\bigg[\Vert A^{\alpha}u(\tau)\Vert^{2^{\ast}_{\mu}}_{2}
				\left(\Vert A^{\beta}u(\tau)\Vert_{2}+\Vert A^{\beta}v(\tau)\Vert_{2}\right)^{2^{\ast}_{\mu}-2}\Vert A^{\beta}(u(\tau)-v(\tau))\Vert_{2}\\
				&\ \ \ \ +\Vert A^{\beta}v(\tau)\Vert^{2^{\ast}_{\mu}-1}_{2}
				\left(\Vert A^{\alpha}u(\tau)\Vert_{2}+\Vert A^{\alpha}v(\tau)\Vert_{2}\right) ^{2^{\ast}_{\mu}-1}\Vert A^{\alpha}(u(\tau)-v(\tau))\Vert_{2}\bigg] d\tau\\
				 \leq&Ct^{1-\gamma-(2^{\ast}_{\mu}-1)(\beta-\frac{1}{2})-2^{\ast}_{\mu}(\alpha-\frac{1}{2})}(2K)^{2(2^{\ast}_{\mu}-1)}\Vert\vert u-v\Vert\vert_{Y_{T,K}}
			\end{aligned}
		\end{equation}
		for $u,v\in Y_{T,K}$, where $C's$ are different constants from line to line. 
	\end{proof}
	We now prove an existence result from which follows directly the proof of Theorem \ref{thm-existence}:
	\begin{Prop}\label{P-Local existence}
			Suppose that $0<\mu<\min\left\lbrace N,4\right\rbrace $ and $\alpha, \beta$ as in \eqref{Index}. Then for each $u_{0}\in H^{1}_{0}(\Omega)$, there exists $T=T(\Vert u_{0}\Vert_{H^{1}_{0}})>0$ such that \eqref{PI} has a unique solution $u\in C([0,T];H^{1}_{0}(\Omega))$.
			Furthermore,  $u$ is a classical solution of problem \eqref{E} in $(0,T]$. Moreover, \eqref{bbb5} and \eqref{bbb6} hold.
	\end{Prop}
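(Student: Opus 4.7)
The plan is to apply the Banach fixed point theorem to $\Phi_{u_0}$ on a closed ball of the space $Y_{T,K}$ from \eqref{Space}, using as main tools Lemmas \ref{L-Heat semigroup properties1} and \ref{L-Nonlinear estimate}. The first observation is the algebraic identity
\begin{equation*}
(2^\ast_\mu-1)\bigl(\beta-\tfrac12\bigr)+2^\ast_\mu\bigl(\alpha-\tfrac12\bigr)=\tfrac12,
\end{equation*}
which follows immediately from \eqref{Index} together with $2^\ast_\mu-1=(N-\mu+2)/(N-2)$. Substituting into Lemma \ref{L-Nonlinear estimate}, the exponent simplifies to $1/2-\gamma$, so that for $\gamma\in\{\alpha,\beta\}$ the quantities $t^{\gamma-1/2}\bigl\|A^\gamma\int_0^t e^{-(t-\tau)A}f(u(\tau))\,d\tau\bigr\|_2$ are bounded by $CK^{22^\ast_\mu-1}$ \emph{independently of $T$}, and the corresponding difference is bounded by $C(2K)^{2(2^\ast_\mu-1)}\|u-v\|_{Y_{T,K}}$.

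For the linear term I rewrite $A^\gamma e^{-tA}u_0=A^{\gamma-1/2}e^{-tA}A^{1/2}u_0$, with $A^{1/2}u_0\in L^2(\Omega)$ of norm equivalent to $\|u_0\|_{H^1_0}$. Lemma \ref{L-Heat semigroup properties1} (applied with exponent $\gamma-1/2$) gives both the uniform bound $t^{\gamma-1/2}\|A^\gamma e^{-tA}u_0\|_2\le C\|u_0\|_{H^1_0}$ and the decay $t^{\gamma-1/2}\|A^\gamma e^{-tA}u_0\|_2\to 0$ as $t\to 0^+$. I would first fix a \emph{universal} $K_0>0$ small enough that $CK_0^{22^\ast_\mu-2}\le 1/2$ and $C(2K_0)^{2(2^\ast_\mu-1)}\le 1/2$ (both possible since $22^\ast_\mu-2>0$ and $2(2^\ast_\mu-1)>0$), and then select $T=T(\|u_0\|_{H^1_0})>0$ small enough that the linear contribution is at most $K_0/2$ on $(0,T]$. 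Then $\Phi_{u_0}$ stabilizes $Y_{T,K_0}$ and is a strict contraction, yielding a unique fixed point $u$ of \eqref{PI} in $Y_{T,K_0}$. Uniqueness in the larger space $C([0,T];H^1_0(\Omega))$ is then obtained by a Gronwall-type argument applied to the difference of two $C([0,T];H^1_0(\Omega))$-solutions, using once more Lemma \ref{L-Nonlinear estimate}.

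Continuity $u\in C([0,T];H^1_0(\Omega))$ with $u(0)=u_0$ follows from the strong continuity of $\{e^{-tA}\}_{t\ge 0}$ on $H^1_0(\Omega)$ together with dominated convergence applied to the Duhamel integral. For the classical regularity on $(0,T]$, since $\alpha,\beta>1/2$ and $u(t)\in D(A^\alpha)\cap D(A^\beta)$ for each $t\in(0,T]$, the Sobolev embedding combined with a time-difference refinement of Lemma \ref{L-Nonlinear estimate} yields that $t\mapsto f(u(t))$ is locally H\"older continuous with values in $L^2(\Omega)$; the theory of analytic semigroups (see \cite{Henry,Tanabe}) then promotes the mild solution to a strong solution, and parabolic Schauder interior estimates upgrade it to an element of $C^{2,1}(\Omega\times(0,T))\cap C(\overline{\Omega}\times(0,T))$. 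The bound \eqref{bbb5} for $\gamma=\alpha,\beta$ reads directly off $u\in Y_{T,K_0}$ (up to shrinking $T$ so that $K_0 T^{1-\gamma}\lesssim \|u_0\|_{H^1_0}$ in order to recover the exponent $-1/2$), while the stronger estimate $\|Au(t)\|_2\le Ct^{-1/2}\|u_0\|_{H^1_0}$ is obtained by bootstrap: apply $A$ to \eqref{Integral Equation}, estimate $\|f(u(\tau))\|_2$ by the product of fractional-power norms as in the proof of Lemma \ref{L-Nonlinear estimate} and by the previous $D(A^\alpha)\cap D(A^\beta)$ bounds, and invoke Lemma \ref{L-Heat semigroup properties1}. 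The estimate \eqref{bbb6} follows by differentiating \eqref{Integral Equation} in $t$, writing
\begin{equation*}
A^\gamma u_t(t)=-A^{1+\gamma}e^{-tA}u_0+A^\gamma f(u(t))-A^{1+\gamma}\!\!\int_0^t e^{-(t-\tau)A}f(u(\tau))\,d\tau,
\end{equation*}
and re-using Lemmas \ref{L-Heat semigroup properties1}--\ref{L-Nonlinear estimate}; the condition $\gamma\in[0,\min\{1-\alpha,1-\beta\})$ arises exactly from the integrability at $\tau=t$ of the Duhamel term.

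The main obstacle is the critical nature of the problem: the identity $(2^\ast_\mu-1)(\beta-1/2)+2^\ast_\mu(\alpha-1/2)=1/2$ leaves \emph{no} smallness of $T$ in the nonlinear estimate, so the contraction argument can only be closed through the smallness of $K$, while the linear initial-data contribution must be made small by shrinking $T$ via the decay provided by Lemma \ref{L-Heat semigroup properties1}. Balancing these two constraints -- together with the subsequent Hölder-in-time regularity of $f(u(\cdot))$ needed to promote the mild fixed point to a classical solution -- is the most delicate technical issue of the proof.
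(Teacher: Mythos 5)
Your fixed-point argument is essentially the paper's: the same space $Y_{T,K}$ from \eqref{Space}, the same observation \eqref{key} that the time exponent in the nonlinear estimate vanishes so the contraction must be closed by smallness of $K$ while $T$ only controls the linear term, the same treatment of the initial data via $A^{\gamma}e^{-tA}u_{0}=A^{\gamma-\frac12}e^{-tA}A^{\frac12}u_{0}$ and Lemma \ref{L-Heat semigroup properties1}, and the same route (H\"older continuity in time of $f(u(\cdot))$ plus parabolic regularity) to classical smoothness. Up to that point the proposal matches the paper's proof.

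The genuine gap is in your derivation of \eqref{bbb5} and \eqref{bbb6}. Membership in $Y_{T,K}$ together with \eqref{b1}, \eqref{b3}--\eqref{b4} only gives $\Vert f(u(\tau))\Vert_{2}\lesssim \tau^{-1/2}$, so your plan to ``apply $A$ to \eqref{Integral Equation} and invoke Lemma \ref{L-Heat semigroup properties1}'' produces $\int_{0}^{t}(t-\tau)^{-1}\tau^{-1/2}d\tau$, which diverges at $\tau=t$; there is no smallness or extra regularity to rescue this. The problem is worse in your displayed formula for $A^{\gamma}u_{t}$: the term $A^{1+\gamma}\int_{0}^{t}e^{-(t-\tau)A}f(u(\tau))d\tau$ carries the kernel $(t-\tau)^{-1-\gamma}$, which is never integrable for $\gamma\geq0$, and the term $A^{\gamma}f(u(t))$ is not even known to be defined, since you only control $\Vert f(u(t))\Vert_{2}$, not $f(u(t))\in D(A^{\gamma})$. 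In particular the restriction $\gamma<\min\{1-\alpha,1-\beta\}$ does not come from ``integrability at $\tau=t$'' of your Duhamel term; in the paper it comes from the H\"older exponent $\delta$ in \eqref{cccc}. The paper resolves exactly this point by the decomposition \eqref{bbb1}: split the Duhamel integral at $t/2$, subtract $f(u(t))$ on $[t/2,t]$ so that \eqref{cccc} upgrades the kernel to $(t-\tau)^{-1+\delta}$, and absorb the remainder into $[I-e^{-\frac{t}{2}A}]f(u(t))$, which needs only the $L^{2}$ bound on $f(u(t))$; the analogous splitting is used for $u_{t}$ before applying $A^{\gamma}$. You do have the required H\"older estimate available (you invoke it for the classical regularity step), but without wiring it into the $\Vert Au(t)\Vert_{2}$ and $\Vert A^{\gamma}u_{t}(t)\Vert_{2}$ estimates, the bounds \eqref{bbb5}--\eqref{bbb6} remain unproved. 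A secondary point: your Gronwall uniqueness in all of $C([0,T];H^{1}_{0}(\Omega))$ cannot be run with Lemma \ref{L-Nonlinear estimate}, since that lemma requires the $D(A^{\alpha})\cap D(A^{\beta})$ norms with $\alpha,\beta>\frac12$, which an arbitrary $C([0,T];H^{1}_{0})$ solution need not control; the paper, like your fixed point, only obtains uniqueness within $Y_{T,K}$.
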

	
	\begin{proof}
		We study problem \eqref{PI} by a contraction mapping argument. For any $u\in Y_{T,K}$, by (\ref{Integral Equation}) and (\ref{nonlinear-estimate1}), we notice that
		\begin{equation}\label{c1}
			\begin{aligned}
				\sup_{t\in (0,T]}t^{\gamma-\frac{1}{2}}\Vert A^{\gamma}\Phi_{u_{0}} [u](t)\Vert_{2}
				\leq&\sup_{t\in (0,T]}t^{\gamma-\frac{1}{2}}\left\lbrace  \Vert A^{\gamma} e^{-tA}u_{0}\Vert_{2}+\left\Vert \int_{0}^{t}A^{\gamma}e^{-(t-\tau)A}f(u(\tau))d\tau\right\Vert_{2}\right\rbrace \\
				\leq&\sup_{t\in (0,T]}\left\lbrace t^{\gamma-\frac{1}{2}}\Vert A^{\gamma} e^{-tA}u_{0}\Vert_{2}
				 +Ct^{\frac{1}{2}-(2^{\ast}_{\mu}-1)(\beta-\frac{1}{2})-2^{\ast}_{\mu}(\alpha-\frac{1}{2})}K^{22^{\ast}_{\mu}-1}\right\rbrace .
			\end{aligned}
		\end{equation}
		Taking $\gamma=\alpha$ and $\gamma=\beta$ in (\ref{c1}) respectively, one has
		\begin{equation}\label{c2}
			\begin{aligned}
				\Vert\vert \Phi_{u_{0}} [u]\Vert\vert_{Y_{T,K}}\leq&\max\left\lbrace\sup_{t\in (0,T]}\left\lbrace t^{\alpha-\frac{1}{2}}\Vert A^{\alpha} e^{-tA}u_{0}\Vert_{2}
				 +Ct^{\frac{1}{2}-(2^{\ast}_{\mu}-1)(\beta-\frac{1}{2})-2^{\ast}_{\mu}(\alpha-\frac{1}{2})}K^{22^{\ast}_{\mu}-1}\right\rbrace ,\right.\\
				&\left. \ \ \ \ \ \ \ \ \sup_{t\in (0,T]}\left\lbrace t^{\beta-\frac{1}{2}}\Vert A^{\beta} e^{-tA}u_{0}\Vert_{2}
				 +Ct^{\frac{1}{2}-(2^{\ast}_{\mu}-1)(\beta-\frac{1}{2})-2^{\ast}_{\mu}(\alpha-\frac{1}{2})}K^{22^{\ast}_{\mu}-1}\right\rbrace \right\rbrace\\
				\leq&\max\left\lbrace \sup_{t\in (0,T]}t^{\alpha-\frac{1}{2}}\Vert A^{\alpha} e^{-tA}u_{0}\Vert_{2},\sup_{t\in (0,T]}t^{\beta-\frac{1}{2}}\Vert A^{\beta} e^{-tA}u_{0}\Vert_{2}\right\rbrace\\
				&+C\sup_{t\in (0,T]}t^{\frac{1}{2}-(2^{\ast}_{\mu}-1)(\beta-\frac{1}{2})-2^{\ast}_{\mu}(\alpha-\frac{1}{2})}K^{22^{\ast}_{\mu}-1},
			\end{aligned}
		\end{equation}
		where $C's$ are different constants from line to line.
		By (\ref{Index}), we note that
		\begin{equation}\label{key}
			\frac{1}{2}-(2^{\ast}_{\mu}-1)(\beta-\frac{1}{2})-2^{\ast}_{\mu}(\alpha-\frac{1}{2})=0.
		\end{equation}
		Hence, by (\ref{c2}) and \eqref{key}, if $T,K$ satisfy
		\begin{equation}\label{kt1}
			CK^{22^{\ast}_{\mu}-1}<K
		\end{equation}
		and
		\begin{equation}\label{kt2}
			\max\left\lbrace \sup_{t\in (0,T]}t^{\alpha-\frac{1}{2}}\Vert A^{\alpha} e^{-tA}u_{0}\Vert_{2},\sup_{t\in (0,T]}t^{\beta-\frac{1}{2}}\Vert A^{\beta} e^{-tA}u_{0}\Vert_{2}\right\rbrace \leq K-CK^{22^{\ast}_{\mu}-1},
		\end{equation}
		then we derive $\Vert\vert \Phi [u]\Vert\vert_{Y_{T,K}}\leq K$.
		Therefore, $\Phi$ maps $Y_{T,K}$ into itself.
		
		Next, the same procedure enables us to prove that $\Phi_{u_{0}}$ is a contraction mapping.
		For given $u, v\in Y_{T,K}$,
		it follows from (\ref{Integral Equation}) and (\ref{nonlinear-estimate2}) that
		\begin{equation}\label{c3}
			\begin{aligned}
				\sup_{t\in (0,T]}t^{\gamma-\frac{1}{2}}\Vert A^{\gamma}\Phi_{u_{0}} [u](t)&-A^{\gamma}\Phi_{u_{0}} [v](t)\Vert_{2}\\
				\leq& \sup_{t\in (0,T]}t^{\gamma-\frac{1}{2}}\left\Vert \int_{0}^{t}A^{\gamma}e^{-(t-\tau)A}(f(u(\tau))-f(v(\tau)))d\tau\right\Vert_{2}\\
				\leq&\sup_{t\in (0,T]}Ct^{\frac{1}{2}-(2^{\ast}_{\mu}-1)(\beta-\frac{1}{2})-2^{\ast}_{\mu}(\alpha-\frac{1}{2})}(2K)^{2(2^{\ast}_{\mu}-1)}\Vert\vert u-v\Vert\vert_{Y_{T,K}}.
			\end{aligned}
		\end{equation}
		Similarly, taking $\gamma=\alpha$ and $\gamma=\beta$ in (\ref{c3}) respectively and by (\ref{key}), one obtain that
		\begin{equation}\label{c4}
			\begin{aligned}
				\Vert\vert \Phi_{u_{0}} [u]-\Phi_{u_{0}} [v]\Vert\vert_{Y_{T,K}}
				\leq& C\sup_{t\in (0,T]}t^{\frac{1}{2}-(2^{\ast}_{\mu}-1)(\beta-\frac{1}{2})-2^{\ast}_{\mu}(\alpha-\frac{1}{2})}(2K)^{2(2^{\ast}_{\mu}-1)}\Vert\vert u-v\Vert\vert_{Y_{T,K}}\\
				\leq&C(2K)^{2(2^{\ast}_{\mu}-1)}\Vert\vert u-v\Vert\vert_{Y_{T,K}}.
			\end{aligned}
		\end{equation}
		Therefore, if $T,K$ satisfy
		\begin{equation}\label{kt3}
			C(2K)^{2(2^{\ast}_{\mu}-2)}\leq\frac{1}{2},
		\end{equation}
		then
		\begin{equation*}
			\Vert\vert \Phi [u]-\Phi [v]\Vert\vert_{Y_{T,K}}\leq
			C(2K)^{2(2^{\ast}_{\mu}-2)}\Vert\vert u-v\Vert\vert_{Y_{T,K}}\leq\frac{1}{2}\Vert\vert u-v\Vert\vert_{Y_{T,K}}.
		\end{equation*}
		Hence, $\Phi : Y_{T,K}\rightarrow Y_{T,K}$ is a contraction mapping.
		
		Here it remains to verify that there exists $K$ and $T$ such that (\ref{kt1})-(\ref{kt2}) and (\ref{kt3}) are satisfied. Obviously, taking $K>0$ small enough, we derive that (\ref{kt1}) and (\ref{kt3}) are satisfied. Next, by Lemma \ref{L-Heat semigroup properties1} and \eqref{Index}, one has
		\begin{equation*}
			t^{\alpha-\frac{1}{2}}\Vert A^{\alpha}e^{-tA}u_{0}\Vert_{2}
			=t^{\alpha-\frac{1}{2}}\Vert A^{\alpha-\frac{1}{2}}e^{-tA}A^{\frac{1}{2}}u_{0}\Vert_{2}\rightarrow0
		\end{equation*}
		and
		\begin{equation*}
			t^{\beta-\frac{1}{2}}\Vert A^{\beta}e^{-tA}u_{0}\Vert_{2}
			=t^{\beta-\frac{1}{2}}\Vert A^{\beta-\frac{1}{2}}e^{-tA}A^{\frac{1}{2}}u_{0}\Vert_{2}\rightarrow0
		\end{equation*}
		as $t\rightarrow0$ provided $u_{0}\in H^{1}_{0}(\Omega)$ and $\alpha,\beta>1/2$. Therefore, there exists $T>0$ small enough such that (\ref{kt2}) is satisfied. In both cases, we can assume that $K$ satisfies
		\begin{equation}\label{K}
			K\leq C\Vert u_{0}\Vert_{H^{1}_{0}}.
		\end{equation}
		Therefore, by applying Banach's Fixed Point Theorem, we show that $\Phi_{u_{0}}[u]$ has a unique fixed point $u$ in $Y_{T,K}$. Clearly, $u$ is a solution of problem \eqref{PI} in $[0,T]$.
		
		Next, similar to the proof of (\ref{c1}), we shall show that $u(t)\in C([0,T];H^{1}_{0}(\Omega))$. First, by (\ref{PI}) and standard computations, one has for $0<t_1\leq t_2\leq T$:
		\begin{equation}\label{e1}
			\begin{aligned}
				\Vert A^{\gamma}(u(t_{1})-u(t_{2}))\Vert_{2}
				\leq&\Vert A^{\gamma}(e^{-t_{1}A}u_{0}-e^{-t_{2}A}u_{0})\Vert_{2}\\
				&+\bigg\Vert A^{\gamma}\int_{0}^{t_{1}}e^{-(t_{1}-\tau)A}\left[ \left( \int_{\Omega}\frac{|u(y,\tau)|^{2^{\ast}_{\mu}}}{|x-y|^{\mu}}dy\right) |u(\tau)|^{2^{\ast}_{\mu}-2}u(\tau)\right]d\tau\\
				&-A^{\gamma}\int_{0}^{t_{2}}e^{-(t_{2}-\tau)A}\left[ \left(\int_{\Omega}\frac{|u(y,\tau)|^{2^{\ast}_{\mu}}}{|x-y|^{\mu}}dy\right) |u(\tau)|^{2^{\ast}_{\mu}-2}u(\tau)\right]d\tau\bigg\Vert_{2}\\
				\leq&\Vert A^{\gamma}e^{-t_{1}A}\left( I-e^{-(t_{2}-t_{1})A}\right) u_{0}\Vert_{2}
				+\bigg\Vert \int_{0}^{t_{1}}A^{\gamma}
				(e^{-(t_{1}-\tau)A}-e^{-(t_{2}-\tau)A})\\
				&\ \ \times\left[\left( \int_{\Omega}\frac{|u(y,\tau)|^{2^{\ast}_{\mu}}}{|x-y|^{\mu}}dy \right)  |u(\tau)|^{2^{\ast}_{\mu}-2}u(\tau)\right]d\tau\bigg\Vert_{2}\\
				&+\bigg\Vert\int_{t_{1}}^{t_{2}}A^{\gamma}e^{-(t_{2}-\tau)A}\left[ \left( \int_{\Omega}\frac{|u(y,\tau)|^{2^{\ast}_{\mu}}}{|x-y|^{\mu}}dy\right) |u(\tau)|^{2^{\ast}_{\mu}-2}u(\tau)\right]d\tau\bigg\Vert_{2}\\
				:=&A_{1}+A_{2}+A_{3}.
			\end{aligned}
		\end{equation}
		By the properties of the associated semigroup of contractions and Lemma \ref{L-Heat semigroup properties1}, we know that
		\begin{equation}\label{important}
			A_{1}\leq \Vert I-e^{-(t_{2}-t_{1})A}\Vert\Vert A^{\gamma}e^{-t_{1}A}u_{0}\Vert_{2}
			\leq Ct_{1}^{-(\gamma-\frac{1}{2})}\Vert I-e^{-(t_{2}-t_{1})A}\Vert \Vert u_{0}\Vert_{H^{1}_{0}}
		\end{equation}
		for $u_{0}\in H^{1}_{0}(\Omega)$.
		It follows from Lemma \ref{L-Heat semigroup properties1}, (\ref{nonlinear-estimate1}) and \eqref{key} that
		\begin{equation}\nonumber
			\begin{aligned}
				A_{2}\leq&\Vert I-e^{-(t_{2}-t_{1})A}\Vert\left\Vert \int_{0}^{t_{1}}A^{\gamma}e^{-(t_{1}-\tau)A}\left[ \left( \int_{\Omega}\frac{|u(y,\tau)|^{2^{\ast}_{\mu}}}{|x-y|^{\mu}}dy\right) |u(\tau)|^{2^{\ast}_{\mu}-2}u(\tau)\right]d\tau\right\Vert_{2}\\
				\leq&\Vert I-e^{-(t_{2}-t_{1})A}\Vert t_1^{\frac{1}{2}-\gamma}CK^{22^{\ast}_{\mu}-1}.
			\end{aligned}
		\end{equation}
		By the properties of semigroup, we obtain $A_{2}\rightarrow0$ as $t_{1}\rightarrow t_{2}$. Similarly, we have $A_{3}\rightarrow0$ as $t_{1}\rightarrow t_{2}$. Combine (\ref{e1}) and (\ref{important}), we derive that $u\in C((0,T];D(A^{\gamma}))$ for $\gamma \in[1/2,1)$ and $u\in C([0,T];H^{1}_{0}(\Omega))$.
		Consequently, we have completed the proof of $u\in C([0,T];H^{1}_{0}(\Omega))$.
		
		To show the uniqueness, let $u,v\in Y_{T,K}$ be two solutions of problem (\ref{PI}). Similar to (\ref{c4}), we derive
		\begin{equation*}
			\Vert\vert u-v\Vert\vert_{Y_{K,T}}\leq C\sup_{t\in (0,T]}t^{\frac{1}{2}-(2^{\ast}_{\mu}-1)(\beta-\frac{1}{2})-2^{\ast}_{\mu}(\alpha-\frac{1}{2})}(2K)^{2(2^{\ast}_{\mu}-1)}\Vert\vert u-v\Vert\vert_{Y_{K,T}},
		\end{equation*}
		which implies that $u=v$. Hence, the proof of uniqueness part is now complete. We now prove that $u_t\in L^2((0,T)\times \Omega)$. From assertion (ii) of Theorem 2  in \cite{Hoshino-Yamada}, we have $u_t\in L^2((t_0,T)\times \Omega)$ for any $t_0\in (0,T)$ and 
		\begin{equation}\label{energy-equality}
	\int_{t_0}^T\int_{\Omega}u_t\,dxdt=\left[J_\mu(u(t))\right]_{t_0}^T.
		\end{equation}
	Passing to the limit as $t_0\to 0^-$, we obtain $u_t\in L^2((0,T)\times \Omega)$.
		Finally, we show that this solution is a classical solution. The proof of this property is same as in \cite{Hoshino-Yamada}.
		Let $u\in Y_{T,K}$ be the solution of \eqref{PI} and $0<t<t+h\leq T$. It follows from \eqref{Integral Equation} that
		\begin{equation}\label{aaa1}
			\begin{aligned}
				\Vert A^{\gamma}\Phi_{u_{0}} [u]&(t+h)-A^{\gamma}\Phi_{u_{0}} [u](t)\Vert_{2}\\
				\leq&\Vert A^{\gamma}(e^{-hA}-I)e^{-tA}u_{0}\Vert_{2}
				+\int_{t}^{t+h}\Vert A^{\gamma}e^{-(t+h-\tau)A}f(u(\tau))\Vert_{2}d\tau\\
				&+\int_{0}^{t}\Vert A^{\gamma}(e^{-hA}-I)e^{-(t-\tau))A}f(u(\tau))\Vert_{2}d\tau
				=:B_{1}+B_{2}+B_{3}.
			\end{aligned}
		\end{equation}
		By \cite[Lemma 2.1$(ii)$]{Hoshino-Yamada} and Lemma \ref{L-Heat semigroup properties1}, we derive
		\begin{equation}\label{aaa2}
			B_{1}\leq Ch^{\delta}\Vert A^{\gamma+\delta}e^{-tA}u_{0}\Vert_{2}
			\leq Ch^{\delta}t^{-(\gamma+\delta-\frac{1}{2})}\Vert u_{0}\Vert_{H^{1}_{0}}
		\end{equation}
		for $\delta\in(0,1)$ and $u_{0}\in H^{1}_{0}(\Omega)$.
		Next, similar to the proof of \eqref{nonlinear-estimate1}, it follows from the definition of $Y_{T,K}$ that
		\begin{equation}\label{aaa3}
			B_{2}\leq C\int_{t}^{t+h} (t+h-\tau)^{-\gamma}e^{-\lambda(t+h-\tau)}\Vert f(u(\tau))\Vert_{2}d\tau
			\leq CK^{22^{\ast}_{\mu}-1}h^{1-\gamma}t^{-\frac{1}{2}}.
		\end{equation}
		Similar to \eqref{aaa3}, it follows from \cite[Lemma 2.1$(ii)$]{Hoshino-Yamada} and Lemma \ref{L-Heat semigroup properties1} that
		\begin{equation}\label{aaa4}
			B_{3}\leq Ch^{\delta}\int_{0}^{t} (t-\tau)^{-(\gamma+\delta)}e^{-\lambda(t-\tau)}\Vert f(u(\tau))\Vert_{2}d\tau
			\leq CK^{22^{\ast}_{\mu}-1}h^{\delta}t^{\frac{1}{2}-(\gamma+\delta)}
		\end{equation}
		for  $\delta\in(0,1-\gamma)$.
		Therefore, by \eqref{aaa1}-\eqref{aaa4}, one has
		\begin{equation}\label{aaa5}
			\begin{aligned}
				\Vert A^{\gamma}\Phi_{u_{0}} [u](t+h)-A^{\gamma}\Phi_{u_{0}} [u](t)\Vert_{2}
				\leq& Ch^{\delta}t^{-(\gamma+\delta-\frac{1}{2})}\Vert u_{0}\Vert_{H^{1}_{0}}+CK^{22^{\ast}_{\mu}-1}h^{1-\gamma}t^{-\frac{1}{2}}\\
				&+CK^{22^{\ast}_{\mu}-1}h^{\delta}t^{\frac{1}{2}-(\gamma+\delta)},
			\end{aligned}
		\end{equation}
		which implies that $A^{\gamma}\Phi_{u_{0}}[u](t)$ is H\"older continuous in $t\in(\varepsilon,T]$ for any $\varepsilon>0$ with $\gamma \in[1/2,1)$. Furthermore, by \eqref{b15} and \eqref{aaa5}, we get the H\"older continuity of $f(u(t))$ in $t\in(\varepsilon,T]$ for any $\varepsilon>0$. More precisely,
		\begin{equation}\label{cccc}
			\begin{aligned}
				\Vert f(u(t+h))-&f(u(t))\Vert_{2}\\
				\leq&C(2K)^{2(2^{\ast}_{\mu}-1)}h^{\delta}t^{-(\alpha+\delta-\frac{1}{2})}[\Vert u_{0}\Vert_{H^{1}_{0}}+K^{22^{\ast}_{\mu}-1}]+C(2K)^{42^{\ast}_{\mu}-3}h^{1-\alpha}t^{-\frac{1}{2}}\\
				&+C(2K)^{2(2^{\ast}_{\mu}-1)}h^{\delta}t^{-(\beta+\delta-\frac{1}{2})}[\Vert u_{0}\Vert_{H^{1}_{0}}+K^{22^{\ast}_{\mu}-1}]+C(2K)^{42^{\ast}_{\mu}-3}h^{1-\beta}t^{-\frac{1}{2}}
			\end{aligned}
		\end{equation}
		for $\delta\in(0,\min\{1-\alpha,1-\beta\})$ and $0<t<t+h\leq T$.
		As is well known, using parabolic regularity theory, the H\"older continuity of $f(u(t))$ with respect to $t\in(0,T]$ imply that the weak  solution $u$ is a classical solution for $t\in(0,T)$.
		
		Finally we prove \eqref{bbb5} and \eqref{bbb6}.
		For this purpose, we note that:
		\begin{equation}\label{bbb1}
			\begin{aligned}
				A\int_{0}^{t}e^{-(t-\tau)A}f(u(\tau))d\tau=&\int_{0}^{\frac{t}{2}}Ae^{-(t-\tau)A}f(u(\tau))d\tau\\
				+&\int_{\frac{t}{2}}^{t}Ae^{-(t-\tau)A}\bigg( f(u(\tau))-f(u(t))\bigg)d\tau\\
				+&[I-e^{-\frac{t}{2}A}]f(u(t))
				:=D_{1}+D_{2}+D_{3}.
			\end{aligned}
		\end{equation}
		By Lemma \ref{L-Heat semigroup properties1}, \eqref{b1} and \eqref{K} , we get
		\begin{equation}\label{bbb2}
			\begin{aligned}
				t^{\frac{1}{2}}\Vert D_{1}\Vert_{2}\leq&t^{\frac{1}{2}}\int_{0}^{\frac{t}{2}}\Vert Ae^{-(t-\tau)A}f(u(\tau))\Vert_{2}d\tau\\
				 \leq&Ct^{\frac{1}{2}}K^{22^{\ast}_{\mu}-1}\int_{0}^{\frac{t}{2}}(t-\tau)^{-1}\tau^{-(2^{\ast}_{\mu}-1)(\beta-\frac{1}{2})-2^{\ast}_{\mu}(\alpha-\frac{1}{2})}d\tau
				\leq C\Vert u_{0}\Vert_{H^{1}_{0}}
			\end{aligned}
		\end{equation}
		for some $C>0$.
		Similarly to the proof of \eqref{cccc}, it follows from \eqref{b1} and \eqref{K} that
		\begin{equation}\label{bbb3}
			t^{\frac{1}{2}}\Vert D_{2}\Vert_{2}\leq Ct^{\frac{1}{2}}\int_{\frac{t}{2}}^{t}(t-\tau)^{-1}\Vert f(u(\tau))-f(u(t))\Vert_{2}d\tau
			\leq C\Vert u_{0}\Vert_{H^{1}_{0}}.
		\end{equation}
		Furthermore, by \eqref{b1} and \eqref{K}, we also have
		\begin{equation}\label{bbb4}
			t^{\frac{1}{2}}\Vert D_{3}\Vert_{2}
			\leq Ct^{\frac{1}{2}-(2^{\ast}_{\mu}-1)(\beta-\frac{1}{2})-2^{\ast}_{\mu}(\alpha-\frac{1}{2})}K^{22^{\ast}_{\mu}-1}\leq C\Vert u_{0}\Vert_{H^{1}_{0}}.
		\end{equation}
		Therefore, by Lemma \ref{L-Heat semigroup properties1} and \eqref{bbb1}-\eqref{bbb4}, one has
		\begin{equation}\nonumber
			\begin{aligned}
				t^{\frac{1}{2}}\Vert Au(t)\Vert_{2}\leq& t^{\frac{1}{2}}\Vert Ae^{-tA}u_{0}\Vert_{2}
				+t^{\frac{1}{2}}\bigg\Vert A\int_{0}^{t}e^{-(t-\tau)A}f(u(\tau))d\tau\bigg\Vert_{2}\\
				\leq& C\Vert u_{0}\Vert_{H^{1}_{0}}
			\end{aligned}
		\end{equation}
		which implies that $\Vert Au(t)\Vert_{2}\leq Ct^{-\frac{1}{2}}\Vert u_{0}\Vert_{H^{1}_{0}}$ for $t\in(0,T]$.
		Furthermore, by the property of operator fractional powers, we complete the proof of \eqref{bbb5}.

		Next, the differentiation of \eqref{PI} with respect to $t$ provides that
	\begin{equation}\nonumber
			\begin{aligned}
				u_{t}(t)=&-Ae^{-tA}u_0+e^{-\frac{t}{2}A}f(u(t))-\int^{\frac{t}{2}}_{0}Ae^{-(t-\tau)A}f(u(\tau))d\tau\\
				&-\int_{\frac{t}{2}}^{t}Ae^{-(t-\tau)A}(f(u(\tau))-f(u(t)))d\tau.
			\end{aligned}
		\end{equation}
		Similar to the proof of \eqref{bbb5}, we show that \eqref{bbb6} holds for $\delta\in(0,\min\{1-\alpha,1-\beta\})$ and $0\leq \gamma<\delta$.
		Consequently, the proof is now completed.
	\end{proof}
	
	From the above Proposition, it follows that $u$ can be extended as a maximal solution in $(0,T_{\max})$ with $T<T_{\max}\leq \infty$ and $T_{\max}$ depending only on $\|u_0\|_{H^1_0(\Omega)}$ (since $T=T(\|u_0\|_{H^1_0(\Omega)})$). In the following Proposition, we investigate the blow-up pattern of solutions in case where $T_{\max}<\infty$.
	
	\begin{Prop}\label{P-Finite time}
			Let $u(t)$ be a maximal solution of \eqref{E} and $T_{\max}$ be the maximum existence time of $u$. If $T_{\max}<+\infty$, then $\lim_{t\rightarrow T_{\max}}\Vert u(t)\Vert_{\infty}=+\infty$
			and $\lim_{t\rightarrow T_{\max}}\Vert u(t)\Vert_{H^1_0(\Omega)}=+\infty$.
	\end{Prop}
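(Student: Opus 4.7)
The plan is to prove both blow-up statements by contradiction, exploiting two inputs from the preceding material: first, that the local existence time in Proposition \ref{P-Local existence} depends only on the $H^1_0$-norm of the initial datum; second, that $J_\mu$ is a Lyapunov functional along classical trajectories. The $H^1_0$-blow-up follows from a direct extension argument, and the $L^\infty$-blow-up reduces to it via energy dissipation.

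\emph{Step 1: $H^1_0$-blow-up.} Suppose, for contradiction, that $\liminf_{t\to T_{\max}^-}\|u(t)\|_{H^1_0(\Omega)}=:M<\infty$. Pick $t_n\uparrow T_{\max}$ with $\|u(t_n)\|_{H^1_0(\Omega)}\leq M+1$. Applying Proposition \ref{P-Local existence} with initial datum $u(t_n)$ yields a solution on $[t_n,t_n+T^\ast]$ with $T^\ast=T(M+1)>0$ depending only on $M+1$. For $n$ sufficiently large so that $t_n+T^\ast>T_{\max}$, concatenating $u|_{[0,t_n]}$ with this extension (which coincides with $u$ on $[t_n,T_{\max})$ by uniqueness) produces a solution strictly beyond $T_{\max}$, contradicting maximality. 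Hence $\lim_{t\to T_{\max}^-}\|u(t)\|_{H^1_0(\Omega)}=+\infty$.

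\emph{Step 2: $L^\infty$-blow-up.} Suppose, for contradiction, that $\sup_{t\in[0,T_{\max})}\|u(t)\|_\infty\leq K<\infty$. Since $u$ is classical on $(0,T_{\max})$ and $u_t\in L^2((0,T_{\max})\times\Omega)$, testing \eqref{E} with $u_t$ yields the dissipation identity
\begin{equation*}
\frac{d}{dt}J_\mu(u(t))=-\int_\Omega u_t^2\,dx\leq 0,
\end{equation*}
so $J_\mu(u(t))\leq J_\mu(u_0)$ for all $t\in[0,T_{\max})$ (after passing $t_0\to 0^+$ as in \eqref{energy-equality}). Since $\Omega$ is bounded and $\mu<N$, the convolution $\int_\Omega |u(y,t)|^{2^{\ast}_\mu}|x-y|^{-\mu}\,dy$ is pointwise bounded by $C(\Omega)K^{2^{\ast}_\mu}$, so $B(u(t))\leq C(K,\Omega)$. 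Rearranging the definition of $J_\mu$ in \eqref{Energy} gives
\begin{equation*}
\|\nabla u(t)\|_2^2=2J_\mu(u(t))+\frac{1}{2^{\ast}_\mu}B(u(t))\leq 2J_\mu(u_0)+\frac{C(K,\Omega)}{2^{\ast}_\mu},
\end{equation*}
a uniform $H^1_0$-bound on $u(t)$, contradicting Step 1.

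The only delicate point is justifying the dissipation identity up to $t=0$, since $u$ is only a classical solution on $(0,T_{\max})$. One first integrates on $[t_0,t]$ with $t_0>0$ (where everything is classical) and then passes $t_0\to 0^+$ using the continuity $u\in C([0,T_{\max});H^1_0(\Omega))$ to recover the inequality $J_\mu(u(t))\leq J_\mu(u_0)$; this is the same limiting argument that produces \eqref{energy-equality}, so no new difficulty arises.
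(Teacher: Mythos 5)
Your Step 1 is exactly the paper's argument for the $H^1_0$-part: take a sequence of times at which the $H^1_0$-norm stays bounded, restart the solution there using that the local existence time in Proposition \ref{P-Local existence} depends only on $\Vert u_0\Vert_{H^1_0}$, and contradict maximality. Your reduction of the $L^\infty$-statement to the $H^1_0$-statement via the energy monotonicity and the bound $B(u(t))\leq C(K,\Omega)$ (using $\mu<N$ and $\Omega$ bounded) is also the same mechanism the paper uses, just organized in the opposite order: the paper converts $L^\infty$-control at a sequence of times into $H^1_0$-control at those times and then extends directly.

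There is, however, a genuine logical gap in Step 2: you negate the wrong statement. The proposition asserts $\lim_{t\to T_{\max}}\Vert u(t)\Vert_\infty=+\infty$, whose negation is that there exists a sequence $t_n\to T_{\max}^-$ with $\sup_n\Vert u(t_n)\Vert_\infty<\infty$; instead you assume the much stronger hypothesis $\sup_{t\in[0,T_{\max})}\Vert u(t)\Vert_\infty\leq K$. Contradicting that only yields $\sup_t\Vert u(t)\Vert_\infty=\infty$ (hence at best $\limsup_{t\to T_{\max}}\Vert u(t)\Vert_\infty=+\infty$, and even that requires ruling out blow-up of the $L^\infty$-norm near $t=0$ when $u_0\notin L^\infty$), which is strictly weaker than the claimed limit. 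The repair is immediate because your estimates are pointwise in $t$: along a sequence $t_n\to T_{\max}$ with $\Vert u(t_n)\Vert_\infty\leq K$ you still have $J_\mu(u(t_n))\leq J_\mu(u_0)$ and $B(u(t_n))\leq C(K,\Omega)$, hence
\begin{equation*}
\Vert\nabla u(t_n)\Vert_2^2=2J_\mu(u(t_n))+\frac{1}{2^{\ast}_\mu}B(u(t_n))\leq 2J_\mu(u_0)+\frac{C(K,\Omega)}{2^{\ast}_\mu},
\end{equation*}
so the $H^1_0$-norm is bounded along $\{t_n\}$, contradicting Step 1 (equivalently, one can restart from $u(t_n)$ as in Step 1, which is precisely the paper's route). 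With this correction of the contradiction hypothesis, your proof coincides in substance with the paper's.
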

	\begin{proof}
		If this not the case, then there exists one sequence $\left\lbrace t_{n}\right\rbrace _{n}$ such that $t_{n}\rightarrow T^{-}_{\max}$ and $\sup_{n}\Vert u(t_{n})\Vert_{\infty}<\infty$.
		It follows from the Hardy-Littlewood-Sobolev inequality and the fact $J_\mu(u(t_n))$ is non increasing in $n$ that $\sup_{n}\|u(t_n\|_{H^1_0(\Omega)}<\infty$.
	%	Next, comparing with the ODE: $v_t= C v^{22^*_\mu-1}$ with $C>\int_{\Omega}\int_{\Omega}\frac{1}{|x-y|^\mu} dxdy$ and $v(0)= \|u_0\|_{H^1_0(\Omega)}$, by max principle that $u(t,\cdot)\leq v(t)$, then maximal time $T^{*}(v_{0})$, which depends only on $v(0)= \Vert u_{0}\Vert_{H^{1}_{0}(\Omega)}$, of the ODE solution  $v(t)$ is smaller than $T_{\max}$.
		Then taking  $n$ large such that $t_n$ verifies $T_{\max}-t_n<\frac{T}{2}$ ($T$ as in Theorem \ref{thm-existence}),  there exists a unique solution $u_{n}:\ [0,T/2]\rightarrow H^{1}_{0}(\Omega)$ of \eqref{PI} with initial value $u(t_{n})$. By uniqueness, we know $u_{n}(t)=u(t+t_{n})$ for $t$ small. Define
		\begin{equation*}
			\bar{u}(t)=\left\{
			\begin{array}{ll}
				u(t),\ &t\in[0,t_{n}]\\
				u_{n}(t-t_{n}),\ &t\in[t_{n},t_{n}+T/2]
			\end{array}
			\right.
		\end{equation*}
		Then $\bar{u}$ is a weak solution to \eqref{E} on $[0,t_{n}+T/2]$ and $t_{n}+\frac{T}{2}>T_{\max}$, which contradicts that $u$ is a solution with maximal time interval $(0,T_{\max})$.		
		Similar contradiction can be derived if there exists a sequence $\left\lbrace t_{n}\right\rbrace _{n}$ such that $t_{n}\rightarrow T_{\max}^-$ and $\sup_{n}\Vert u(t_{n})\Vert_{H^{1}_{0}(\Omega)}<\infty$.
		The proof is thus completed.
	\end{proof}

	Next, multiplying problem \eqref{E} by $u$ and $u_{t}$ respectively and integrating over $\Omega$, we obtain the results with respect to energy identity and invariance of $W_{\mu}$ and $V_{\mu}$. We first infer the following Proposition whose proof is obtained by straightforward calculations.
	
	\begin{Prop}\label{P-Energy Identity}
		Let $u$ be a maximal solution to problem \eqref{E} on $[0,T_{\max})$.
		%given by Proposition \ref{P-Local existence}.
		Then
		\begin{equation}\label{energy-identity1}
			\int_{0}^{t}\Vert u_{t}(\tau)\Vert^{2}_{2}d\tau+J_{\mu}(u(t))=J_{\mu}(u_{0})\ \text{on}\ [0,T_{\max}),
		\end{equation}
		and
		\begin{equation}\label{energy-identity2}
			\frac{1}{2}\frac{d}{dt}\Vert u(t)\Vert^{2}_{2}=-I_{\mu}(u(t))\ \text{on}\ [0,T_{\max}).
		\end{equation}
	\end{Prop}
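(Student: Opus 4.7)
The strategy is to obtain the two identities by multiplying the PDE \eqref{E} by $u_t$ and by $u$ respectively, integrating over $\Omega$, and justifying the resulting computations using the regularity provided by Theorem \ref{thm-existence}. Recall that for each $t\in(0,T_{\max})$, the solution $u$ is classical, belongs to $D(A)\cap D(A^\gamma)$ with the bounds \eqref{bbb5}--\eqref{bbb6}, and $u_t(\cdot,t)\in L^2(\Omega)$ with $u_t\in L^2((0,T)\times\Omega)$ for every $T<T_{\max}$. These regularity properties make all integrations by parts legitimate on any subinterval $[\varepsilon,t]\subset(0,T_{\max})$.

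For \eqref{energy-identity1}, the plan is to fix $0<\varepsilon<t<T_{\max}$, multiply the equation by $u_t$ and integrate over $\Omega$ to get
\begin{equation*}
\Vert u_t(\tau)\Vert_2^2+\int_\Omega\nabla u\cdot\nabla u_t\,dx=\int_\Omega\left(\int_\Omega\frac{|u(y,\tau)|^{2^\ast_\mu}}{|x-y|^\mu}dy\right)|u|^{2^\ast_\mu-2}u\,u_t\,dx.
\end{equation*}
The left gradient term equals $\tfrac12\tfrac{d}{d\tau}A(u)$, and a symmetry argument on the double integral together with the chain rule yields that the right side equals $\tfrac{1}{2\cdot 2^\ast_\mu}\tfrac{d}{d\tau}B(u)$; thus $\Vert u_t(\tau)\Vert_2^2=-\tfrac{d}{d\tau}J_\mu(u(\tau))$ pointwise on $(0,T_{\max})$. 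Integrating from $\varepsilon$ to $t$ gives
\begin{equation*}
\int_\varepsilon^t\Vert u_t(\tau)\Vert_2^2\,d\tau+J_\mu(u(t))=J_\mu(u(\varepsilon)).
\end{equation*}
Then I let $\varepsilon\to 0^+$: by the continuity $u\in C([0,T];H^1_0(\Omega))$ established in Proposition \ref{P-Local existence}, $J_\mu(u(\varepsilon))\to J_\mu(u_0)$ (the local term $A$ via norm continuity, the nonlocal term $B$ via the Hardy--Littlewood--Sobolev inequality combined with the Sobolev embedding $H^1_0\hookrightarrow L^{2\cdot 2^\ast_\mu N/(2N-\mu)}$), and $\int_\varepsilon^t\Vert u_t\Vert_2^2\,d\tau\to\int_0^t\Vert u_t\Vert_2^2\,d\tau$ by $u_t\in L^2((0,t)\times\Omega)$ and monotone convergence.

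For \eqref{energy-identity2}, the same strategy applies: multiply \eqref{E} by $u$, integrate over $\Omega$, and use $\int_\Omega u\,u_t\,dx=\tfrac12\tfrac{d}{d\tau}\Vert u\Vert_2^2$ (justified since $u(\cdot,\tau)\in L^2(\Omega)$ is differentiable in $\tau$ as a map into $L^2$ thanks to \eqref{bbb6} with $\gamma=0$) to get
\begin{equation*}
\tfrac12\tfrac{d}{d\tau}\Vert u(\tau)\Vert_2^2+A(u(\tau))=B(u(\tau))=-I_\mu(u(\tau))+A(u(\tau)),
\end{equation*}
which immediately yields the claim.

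The main technical point I expect is the rigorous justification of differentiating $B(u(\tau))$ with respect to $\tau$ and symmetrizing the double integral against $u_t$ as a single factor. This is handled as follows: by \eqref{bbb5}--\eqref{bbb6} we have $u(\cdot,\tau)\in D(A)\hookrightarrow L^r(\Omega)$ for any $r<\infty$ when $N\leq 4$ (and up to a high enough integrability in general, enough to handle $2\cdot 2^\ast_\mu$), and $u_t(\cdot,\tau)\in L^2(\Omega)$ with the quantitative bound of \eqref{bbb6}. Hence the bilinear form $(\varphi,\psi)\mapsto\int\int\frac{|\varphi(y)|^{2^\ast_\mu}|\psi(x)|^{2^\ast_\mu-2}\psi(x)\varphi(x)}{|x-y|^\mu}dxdy$ is well defined and continuously differentiable along the trajectory $\tau\mapsto u(\tau)$, with derivative equal to $2\cdot 2^\ast_\mu\int_\Omega\big(\int_\Omega\frac{|u(y,\tau)|^{2^\ast_\mu}}{|x-y|^\mu}dy\big)|u|^{2^\ast_\mu-2}u\,u_t\,dx$; this is exactly what is needed to identify the right-hand side of the $u_t$--testing as $\tfrac{1}{2\cdot 2^\ast_\mu}\tfrac{d}{d\tau}B(u)$. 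Once this symmetrization is established, both identities follow by the scheme described above.
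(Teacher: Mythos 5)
Your proposal is correct and follows essentially the same route as the paper, which obtains the two identities precisely by multiplying \eqref{E} by $u_t$ and by $u$, integrating over $\Omega$, and treating the computation as straightforward given the regularity of Theorem \ref{thm-existence} (the paper additionally records $u_t\in L^2((0,T)\times\Omega)$ and the energy equality on $(t_0,T)$ in Proposition \ref{P-Local existence} before letting $t_0\to 0$, which matches your $\varepsilon\to 0^+$ step). Your added justifications (symmetrizing the double integral, using \eqref{bbb5}--\eqref{bbb6} and the Hardy--Littlewood--Sobolev inequality to make the pairing with $u_t$ and the continuity of $J_\mu$ at $t=0$ rigorous) are exactly the details the paper leaves implicit.
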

	The next proposition deals with the invariance of $W_\mu$ and $V_\mu$ sets:
	\begin{Prop}\label{P-Invariant set}
		Let $u$ be a maximal solution of problem \eqref{E} on $[0,T_{\max})$.
		%given by Proposition \ref{P-Local existence}.
		If there exists a $t_{0}\in[0,T_{\max})$ such that $u(t_{0})\in W_{\mu}$ (respectively $V_{\mu}$), then $u(t)\in W_{\mu}$ (respectively $V_{\mu}$) for any $t\in[t_{0},T_{\max})$.
	\end{Prop}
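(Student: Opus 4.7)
The plan is to argue by contradiction, combining the energy–monotonicity from Proposition \ref{P-Energy Identity} with the alternative description of $W_\mu$ and $V_\mu$ supplied by Remark \ref{R2}. After a translation in time I may assume $t_0=0$.

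First I treat the case $u_0\in W_\mu$. By Remark \ref{R2}, $W_\mu=W_1=\{v\in H^1_0(\Omega)\mid J_\mu(v)<m_\mu,\ B(v)<S_{H,L}^{(2N-\mu)/(N-\mu+2)}\}$, which is open in $H^1_0(\Omega)$ since both $J_\mu$ and $B$ are continuous on $H^1_0(\Omega)$ (using Hardy--Littlewood--Sobolev and the Sobolev embedding). Define $t^*:=\sup\{t\in[0,T_{\max})\mid u(s)\in W_\mu\ \forall s\in[0,t]\}$. Since $u\in C([0,T_{\max});H^1_0(\Omega))$ and $u_0\in W_\mu$, we have $t^*>0$. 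Suppose for contradiction that $t^*<T_{\max}$. From \eqref{energy-identity1} we get $J_\mu(u(t^*))\le J_\mu(u_0)<m_\mu$, and passing to the limit in $B(u(s))<S_{H,L}^{(2N-\mu)/(N-\mu+2)}$ along $s\nearrow t^*$ gives $B(u(t^*))\le S_{H,L}^{(2N-\mu)/(N-\mu+2)}$.

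The delicate step is to exclude the threshold equality $B(u(t^*))=S_{H,L}^{(2N-\mu)/(N-\mu+2)}$. Here I reuse the computation of Remark \ref{R2}: together with $J_\mu(u(t^*))\le m_\mu$, such equality forces $A(u(t^*))=S_{H,L}(B(u(t^*)))^{(N-2)/(2N-\mu)}$, i.e.\ $u(t^*)$ is an extremal of $S_{H,L}(\Omega)$; but $S_{H,L}(\Omega)$ is not attained on a proper bounded $\Omega$, so $u(t^*)\equiv 0$, and then $B(u(t^*))=0$, a contradiction. Hence the strict inequality $B(u(t^*))<S_{H,L}^{(2N-\mu)/(N-\mu+2)}$ holds, so $u(t^*)\in W_1=W_\mu$. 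Openness of $W_\mu$ together with the continuity of $u(\cdot)$ then yields $\delta>0$ with $u(s)\in W_\mu$ for $s\in[0,t^*+\delta)$, contradicting the definition of $t^*$. Therefore $t^*=T_{\max}$.

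The case $u_0\in V_\mu$ is entirely parallel, using this time $V_\mu=V_1=\{v\in H^1_0(\Omega)\mid J_\mu(v)<m_\mu,\ B(v)>S_{H,L}^{(2N-\mu)/(N-\mu+2)}\}$, which is again open. The same energy bound $J_\mu(u(t))\le J_\mu(u_0)<m_\mu$ and the same exclusion of the critical value $B(u(t^*))=S_{H,L}^{(2N-\mu)/(N-\mu+2)}$ (again impossible under $J_\mu<m_\mu$, as $\Omega\neq\mathbb{R}^N$) yield $B(u(t^*))>S_{H,L}^{(2N-\mu)/(N-\mu+2)}$ and hence $u(t^*)\in V_\mu$, contradicting maximality of $t^*$. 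The principal obstacle throughout is precisely the exclusion of the threshold $B=S_{H,L}^{(2N-\mu)/(N-\mu+2)}$, which is where the non-attainment of $S_{H,L}$ on bounded domains — and hence the reformulation of Remark \ref{R2} — plays an essential role.
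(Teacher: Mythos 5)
Your argument is correct, but it runs along a different track from the paper's. The paper first uses the monotonicity of $J_{\mu}$ from Proposition \ref{P-Energy Identity} to place the trajectory in $W_{\mu}\cup V_{\mu}$ for $t\geq t_{0}$ (via \eqref{a6}), and then rules out a crossing by the intermediate value theorem applied to $t\mapsto I_{\mu}(u(t))$: a crossing time $\tilde t_{1}$ would satisfy $I_{\mu}(u(\tilde t_{1}))=0$, hence $J_{\mu}(u(\tilde t_{1}))\geq m_{\mu}$ by the Nehari characterization $m_{\mu}=\inf_{\mathcal N}J_{\mu}$ in \eqref{Potential Deep}, contradicting $J_{\mu}(u(\tilde t_{1}))\leq J_{\mu}(u(t_{0}))<m_{\mu}$. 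You instead work with the reformulation of Remark \ref{R2}, replace the sign of $I_{\mu}$ by the position of $B(u)$ relative to $S_{H,L}^{(2N-\mu)/(N-\mu+2)}$, and run a supremum-of-invariance-times argument, excluding the threshold $B(u(t^{*}))=S_{H,L}^{(2N-\mu)/(N-\mu+2)}$ through the non-attainment of $S_{H,L}(\Omega)$ on a proper domain; openness of $W_{1}$, $V_{1}$ then extends the invariance past $t^{*}$. The paper's route is shorter and needs only the elementary identity $\inf_{\mathcal N}J_{\mu}=m_{\mu}$, whereas yours leans on the heavier non-attainment fact (already used to prove Remark \ref{R2}); on the other hand your formulation disposes of the degenerate value $u=0$ automatically (at the threshold $B=S_{H,L}^{(2N-\mu)/(N-\mu+2)}>0$ one cannot have $u=0$, and $0\in W_{\mu}$ anyway), while the paper's IVT step tacitly assumes $u(\tilde t_{1})\neq 0$ when invoking $I_{\mu}=0\Rightarrow J_{\mu}\geq m_{\mu}$, a case that strictly speaking must be ruled out separately (e.g.\ by forward uniqueness). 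Both proofs are instances of the same potential-well invariance principle, but your choice of separating criterion and the openness/supremum mechanism are genuinely different from the paper's.
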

	\begin{proof}
		Let $u(t_{0})\in W_{\mu}$ for $t_{0}\in[0,T_{\max})$. It follows from Proposition \ref{P-Energy Identity} that $J_{\mu}(u(t))$ is non-increasing along the flow generated by problem \eqref{E}. Therefore, $u(t)\in W_{\mu}\cup V_{\mu}$ for any $t\in[t_{0},T_{\max})$.
		
		Next, we show that $u(t)\in W_{\mu}$ for any $t\in[t_{0},T_{\max})$. On the contrary, if there exists a $t_{1}\in[t_{0},T_{\max})$ such that $u(t_{1})\in V_{\mu}$, then there exists a $\tilde{t}_{1}\in[t_{0},t_{1})$ such that $I_{\mu}(u(\tilde{t}_{1}))=0$ provided $t\to I_{\mu}(u(t))$ is continuous on $[t_{0},T_{\max})$.
		
		Next, by the fact that $J_{\mu}(u)\geq m_{\mu}$ provided $I_{\mu}(u)=0$ and combining with (\ref{energy-identity1}), one has
		\begin{equation*}
			m_{\mu}\leq J_{\mu}(u(\tilde{t}_{1}))\leq J_{\mu}(u(t_{0})),
		\end{equation*}
		which contradicts with $J_{\mu}(u(t_{0}))<m_{\mu}$. Hence, $u(t)\in W_{\mu}$ for any $t\in[t_{0},T_{\max})$.
		
		Analogously, if $u(t_{0})\in V_{\mu}$ for $t_{0}\in[0,T_{\max})$, then we prove similarly that $u(t)\in V_{\mu}$ for any $t\in[t_{0},T_{\max})$. 
	\end{proof}

	\section{Proof of Theorems \ref{T-Existence potential well}--\ref{T-Global existence}}
	In this section, we shall verify the existence of potential well structure and give the proof of global existence and  solutions blow-up in finite time of problem \eqref{E}.
	
	\textbf{Proof of Theorem \ref{T-Existence potential well}.} For a nonnegative function $\phi\in L^{\infty}\cap H^{1}_{0}(\Omega)$ and nontrivial, we denote the (maximal) solution of problem \eqref{E} by $u\, :\,[0,T_{\max})\ni t\mapsto H^1_0(\Omega)$ with initial value $u(0)=u_{0}:=\lambda \phi$, where $\lambda>0$.
	
	Set
	\begin{equation}\nonumber
		\begin{aligned}
			\Lambda_{s}:=&\left\lbrace \lambda\in\mathbb{R}^{+}\ |\ u(t_{0})\in W_{\mu}\ \text{for some}\ t_{0}\in[0,T_{\max})\right\rbrace ,\\
			\Lambda_{us}:=&\left\lbrace \lambda\in\mathbb{R}^{+}\ |\ u(t_{0})\in V_{\mu}\ \text{for some}\ t_{0}\in[0,T_{\max})\right\rbrace ,\\
			\Lambda_{c}:=&\left\lbrace \lambda\in\mathbb{R}^{+}\ |\ u(t)\not\in W_{\mu}\cup V_{\mu}\ \text{for all}\ t\in[0,T_{\max})\right\rbrace.
		\end{aligned}
	\end{equation}
	Clearly, $\mathbb{R}^{+}=\Lambda_{s}\cup\Lambda_{us}\cup\Lambda_{c}$ thanks to Proposition \ref{P-Invariant set}. Furthermore, by the properties of $W_{\mu}$ and $V_{\mu}$, it is easy to show that $\Lambda_{s},\Lambda_{us}$ are nonempty sets in $\mathbb{R}^{+}$.
	
	Next, by the comparison principle (see \cite[Proposition 2]{Li-Liu}), we know that if $u_{0}\leq v_{0}$ a.e. in $\Omega$, then the solutions $u(t)$ and $v(t)$ of problem \eqref{E} with initial data $u_{0}$ and $v_{0}$ respectively satisfies $u(t)\leq v(t)$ for any $0\leq t < T_{\max}(v_{0})\leq  T_{\max}(u_{0})$, where $T_{\max}(v_{0})$,  $T_{\max}(u_{0})$ are the maximal existence time of $v(t)$ and $u(t)$ respectively. Since $0\in W_\mu$ and $\lambda \in V_\mu$ for large $\lambda>0$, $\Lambda_s$ and $\Lambda_{us}$ are non empty. Furthermore, using \cite[Theorems 1.4, 1.5]{Zhang-Yang} together with Proposition \ref{P-Invariant set}, we infer that
	\begin{equation}\label{g1}
		\Lambda_{s}\ \text{and}\ \Lambda_{us}\ \text{are ordered intervals in}\ \mathbb{R}^{+}.
	\end{equation}
	By Proposition \ref{P-Properties}, we know that $W_{\mu}$ and $V_{\mu}$ are open sets. For arbitrary $t>0$ but fixed, it follows from the openness of $W_{\mu},V_{\mu}$ and the continuity of
	\begin{equation}\label{g2}
		\lambda\in\mathbb{R}^{+}\mapsto\lambda\phi\in H^{1}_{0}(\Omega)\mapsto u(t)\in H^{1}_{0}(\Omega)
	\end{equation}
	that
	\begin{equation}\label{g3}
		\Lambda_{s}\ \text{and}\ \Lambda_{us}\ \text{are open sets in}\ \mathbb{R}^{+}.
	\end{equation}
	Therefore, by (\ref{g1})-(\ref{g3}), there exists $0<\lambda_{1}\leq\lambda_{2}<\infty$ such that
	\begin{equation*}
		\Lambda_{s}=(0,\lambda_{1}),\ \Lambda_{us}=(\lambda_{2},\infty)\ \text{and}\ \Lambda_{c}=[\lambda_{1},\lambda_{2}].
	\end{equation*}
	Consequently, the proof is complete.
	$\hfill{} \Box$
	
	\begin{lem}\label{Lemma 3.1}
		For each $u\in H^{1}_{0}(\Omega)\setminus\{0\}$, there exists a unique $\bar{\theta}>0$ such that $\bar{\theta}u\in \mathcal{N}$, where
		\begin{equation*}
			\mathcal{N}=\left\lbrace u\in H^{1}_{0}(\Omega)\setminus\{0\}\ |\ I_{\mu}(u)=0\right\rbrace
		\end{equation*}
		acts as the Nehari manifold associated to problem \eqref{E}.
		Furthermore, if $u\in V_{\mu}$, then $\bar{\theta}\in(0,1)$ and
		\begin{equation*}
			 A(u)
			\geq\frac{22^{\ast}_{\mu}}{2^{\ast}_{\mu}-1}m_{\mu}.
		\end{equation*}
	\end{lem}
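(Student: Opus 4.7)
The plan is to reduce everything to the one-variable fiber map $\theta\mapsto I_\mu(\theta u)$, which has an explicit scaling form. First I would compute the homogeneity: $A(\theta u)=\theta^2 A(u)$ and $B(\theta u)=\theta^{2\cdot 2^*_\mu}B(u)$, so
\begin{equation*}
I_\mu(\theta u)=\theta^2\bigl(A(u)-\theta^{2(2^*_\mu-1)}B(u)\bigr).
\end{equation*}
For $u\neq 0$ in $H^1_0(\Omega)$ we have $A(u),B(u)>0$, hence $I_\mu(\theta u)=0$ with $\theta>0$ forces
\begin{equation*}
\bar\theta=\left(\frac{A(u)}{B(u)}\right)^{\frac{1}{2(2^*_\mu-1)}}=\left(\frac{A(u)}{B(u)}\right)^{\frac{N-2}{2(N-\mu+2)}},
\end{equation*}
which coincides with the maximizer already computed in the derivation of $\tilde m_\mu=m_\mu$. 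The exponent $2(2^*_\mu-1)>0$ gives uniqueness.

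Next, if $u\in V_\mu$, then by definition $I_\mu(u)<0$, i.e., $A(u)<B(u)$, and the explicit formula for $\bar\theta$ immediately gives $\bar\theta<1$.

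For the lower bound on $A(u)$, I would use the characterization $m_\mu=\tilde m_\mu=\inf_{v\in H^1_0(\Omega)\setminus\{0\}}\sup_{\theta>0}J_\mu(\theta v)$ established on page~\pageref{Potential Deep}, together with the observation that $\theta\mapsto J_\mu(\theta u)$ attains its positive maximum exactly at $\bar\theta$ (the second-derivative test, or the same scaling argument). Hence $J_\mu(\bar\theta u)=\sup_{\theta>0}J_\mu(\theta u)\geq m_\mu$. On the Nehari manifold one has $B(\bar\theta u)=A(\bar\theta u)$, so
\begin{equation*}
J_\mu(\bar\theta u)=\frac{1}{2}A(\bar\theta u)-\frac{1}{2\cdot 2^*_\mu}A(\bar\theta u)=\frac{2^*_\mu-1}{2\cdot 2^*_\mu}A(\bar\theta u),
\end{equation*}
which yields $A(\bar\theta u)\geq \frac{2\cdot 2^*_\mu}{2^*_\mu-1}m_\mu$. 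Combining with $\bar\theta<1$ in the scaling $A(\bar\theta u)=\bar\theta^2 A(u)$ gives $A(u)\geq A(\bar\theta u)\geq \frac{2\cdot 2^*_\mu}{2^*_\mu-1}m_\mu$.

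There is no real obstacle here; the only conceptual point to flag is the identification of $\sup_{\theta>0}J_\mu(\theta u)$ with $J_\mu(\bar\theta u)$, but this follows directly from the explicit computation of $\bar\theta$ above (equivalently, from the calculation already displayed in the derivation of \eqref{Potential Deep}). The rest is purely algebraic manipulation of the two homogeneities $A(\theta u)=\theta^2 A(u)$ and $B(\theta u)=\theta^{2\cdot 2^*_\mu}B(u)$.
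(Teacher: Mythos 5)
Your proposal is correct and follows essentially the same route as the paper: the explicit fibering computation giving $\bar\theta=\bigl(A(u)/B(u)\bigr)^{1/(2\cdot 2^{\ast}_{\mu}-2)}$, the observation $A(u)<B(u)$ for $u\in V_{\mu}$ to get $\bar\theta\in(0,1)$, and the Nehari identity $J_{\mu}=\frac{2^{\ast}_{\mu}-1}{2\cdot 2^{\ast}_{\mu}}A$ on $\mathcal{N}$ combined with $A(u)\geq\bar\theta^{2}A(u)=A(\bar\theta u)$. The only cosmetic difference is that you obtain $J_{\mu}(\bar\theta u)\geq m_{\mu}$ directly from the $\inf\text{-}\sup$ characterization \eqref{Potential Deep}, whereas the paper cites $m_{\mu}=\inf_{v\in\mathcal{N}}J_{\mu}(v)$; these are the same fact.
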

	\begin{proof}
		By the definition of $I_{\mu}(u)$ in (\ref{Nehari-Energy}), we get
		\begin{equation*}
			I_{\mu}(\theta u)=A(\theta u)
			-B(\theta u)=\theta^{2}A(u)
			-\theta^{22^{\ast}_{\mu}}B(u).
		\end{equation*}
		Therefore, there exist a unique
		\begin{equation*}
			\bar{\theta}:=\left(\frac{A(u)}{B(u)}\right)^{1/(22^{\ast}_{\mu}-2)}
		\end{equation*}
		such that $\bar{\theta}u\in \mathcal{N}$.
		If $u\in V_{\mu}$, then we get that $A(u)<B(u)$, and so $\bar{\theta}\in(0,1)$.
		
		Furthermore, by the fact that $m_{\mu}=\inf_{v\in \mathcal{N}}J_{\mu}(v)$ and (\ref{Potential Deep}), we obtain
		\begin{equation*}
			m_{\mu}
			=\frac{2^{\ast}_{\mu}-1}{22^{\ast}_{\mu}}\inf_{v\in \mathcal{N}}A(v)\leq\frac{2^{\ast}_{\mu}-1}{22^{\ast}_{\mu}}A(v)
		\end{equation*}
		for $v\in\mathcal{N}$. Hence, one has $A(v)\geq\frac{22^{\ast}_{\mu}}{2^{\ast}_{\mu}-1}m_{\mu}$ for $v\in\mathcal{N}$.
		
		From above, for $u\in V_{\mu}$, there exists a unique $\bar{\theta}\in(0,1)$ such that $\bar{\theta}u\in \mathcal{N}$. Then
		\begin{equation*}
			A(u)
			\geq\bar{\theta}^{2}A(u)\geq\frac{22^{\ast}_{\mu}}{2^{\ast}_{\mu}-1}m_{\mu}>0.
		\end{equation*}
	The proof is now completed.
	\end{proof}
	
	\textbf{Proof of Theorem \ref{T-Blow up}.} In case that $J_{\mu}(u(t))<0$ for some $t\geq t_0$, the result of $T_{\max}<+\infty$ is classical, see \cite{Tsutsumi1} (or \cite[Theorem 1.5]{Zhang-Yang}). Using Propositions \ref{P-Finite time} and \ref{P-Invariant set} assertions in Theorem \ref{T-Blow up} hold in this case. Now we assume that $J_{\mu}(u(t))\geq0$ for any $t_0\leq t<T_{\max}$. Arguing by contradiction, we assume that $T_{\max}=+\infty$. It follows from (\ref{Energy}) and Proposition \ref{P-Invariant set} that
	\begin{equation}\nonumber
		\begin{aligned}
			\frac{1}{2}\frac{d}{dt}\Vert u(t)\Vert^{2}_{2}
			=&-A(u(t))
			+B(u(t))
			=-2J_{\mu}(u(t))+\frac{2^{\ast}_{\mu}-1}{2^{\ast}_{\mu}}B(u(t))\\
			\geq&-2J_{\mu}(u(t_{0}))+\frac{2^{\ast}_{\mu}-1}{2^{\ast}_{\mu}}B(u(t))
		\end{aligned}
	\end{equation}
	for $t\in[t_{0},+\infty)$.
	Set $\sigma:=1-\frac{J_{\mu}(u(t_{0}))}{m_{\mu}}>0$. Then, we get
	\begin{equation*}
		\frac{1}{2}\frac{d}{dt}\Vert u(t)\Vert^{2}_{2}
		\geq-2m_{\mu}(1-\sigma)+\frac{2^{\ast}_{\mu}-1}{2^{\ast}_{\mu}}B(u(t)),\ t\in[t_{0},+\infty).
	\end{equation*}
	By Lemma \ref{Lemma 3.1} and the definition of $V_{\mu}$, we get for any $t\geq t_0$
	\begin{equation*}
	B(u(t))>A(u(t))
		\geq\frac{22^{\ast}_{\mu}}{2^{\ast}_{\mu}-1}m_{\mu},
	\end{equation*}
	and then $2m_{\mu}(1-\sigma)<\frac{(2^{\ast}_{\mu}-1)(1-\sigma)}{2^{\ast}_{\mu}}B(u(t))$.
	Therefore, one has for any $t\geq t_0$
	\begin{equation}\label{i1}
		\frac{1}{2}\frac{d}{dt}\Vert u(t)\Vert^{2}_{2}
		\geq\frac{(2^{\ast}_{\mu}-1)\sigma}{2^{\ast}_{\mu}}B(u(t))
		\geq\frac{(2^{\ast}_{\mu}-1)\sigma}{2^{\ast}_{\mu}}A(u(t)).
	\end{equation}
	By (\ref{i1}) and the Poincar\'e inequality, we conclude that there exists $T_{1}>0$ such that $\lim_{t\rightarrow T_{1}}\Vert u(t)\Vert_{2}=+\infty$ which together with Sobolev embedding contradicts $T_{\max}=+\infty$. Again from Propositions \ref{P-Finite time} and \ref{P-Invariant set},  assertions of Theorem \ref{T-Blow up} hold.
	%we obtain that $\lim_{t\rightarrow T_{\max}}\Vert u(t)\Vert_{\infty}=+\infty$. Consequently, we complete the proof.
	$\hfill{} \Box$
	
	Next, we prove Theorem \ref{T-Global existence} by a rescaling argument introduced in (\ref{Scaling}).
	By straightforward calculations, we infer the invariance of problem \eqref{E} and energy $J_{\mu}$ under this scaling. Precisely, one has
	
	\begin{Prop}\label{P-Scaling properties}
		Problem \eqref{E} is invariant under the change of variables given by (\ref{Scaling}). Namely, $v$ satisfies
		\begin{equation*}
			v_{s}-\Delta v=\left( \int_{\Omega}\frac{|v(z)|^{2^{\ast}_{\mu}}}{|y-z|^{\mu}}dy\right) |v|^{2^{\ast}_{\mu}-2}v,\ (y,s)\in\Omega\times[0,\eta]
		\end{equation*}
		for $\eta>0$ if and only if $u$ satisfies
		\begin{equation*}
			u_{t}-\Delta u=\left( \int_{\Omega_{\lambda}}\frac{|u(y)|^{2^{\ast}_{\mu}}}{|x-y|^{\mu}}dy\right) |u|^{2^{\ast}_{\mu}-2}u,\ (x,t)\in\Omega_{\lambda}\times[t_{0},t_{0}+\lambda^{-2}\eta]
		\end{equation*}
		where $\Omega_{\lambda}:=x_{0}+\lambda^{-1}\Omega$.
		Moreover,
		\begin{equation*}
			\left\Vert v_{s}\right\Vert_{L^{2}((0,\eta)\times\Omega)}
			=\left\Vert u_{t}\right\Vert_{L^{2}((t_{0},t_{0}+\lambda^{-2}\eta)\times\Omega_{\lambda})}
		\end{equation*}
		and
		\begin{equation*}
			\Vert \nabla v(s)\Vert_{2,\Omega}
			=\Vert \nabla u(t)\Vert_{2,\Omega_{\lambda}},\ \int_{\Omega}\left( \int_{\Omega}\frac{|v(z)|^{2^{\ast}_{\mu}}}{|y-z|^{\mu}}dy\right) |v|^{2^{\ast}_{\mu}}dy
			=\int_{\Omega_{\lambda}}\left( \int_{\Omega_{\lambda}}\frac{|u(y)|^{2^{\ast}_{\mu}}}{|x-y|^{\mu}}dy\right) |u|^{2^{\ast}_{\mu}}dx
		\end{equation*}
		and
		\begin{equation*}
			\Vert v(s)\Vert_{2,\Omega}=\lambda\Vert u(t)\Vert_{2,\Omega_{\lambda}}.
		\end{equation*}
		Furthermore, the energy $J_{\mu}$ is invariant under the rescaling \eqref{Scaling}, that is
		\begin{equation*}
			J_{\mu}(v(s))=J_{\mu}(u(t)).
		\end{equation*}
	\end{Prop}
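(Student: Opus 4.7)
The plan is to establish each identity by a direct chain-rule computation combined with a change of variables in the Choquard-type convolution integral, exploiting the fact that the scaling exponent $(N-2)/2$ is precisely the $\dot H^1$-critical one. Writing $v(y,s)=\lambda^{-(N-2)/2}u(x,t)$ with $y=\lambda(x-x_{0})$ and $s=\lambda^{2}(t-t_{0})$, the chain rule gives $v_{s}=\lambda^{-(N+2)/2}u_{t}$ and $\Delta_{y}v=\lambda^{-(N+2)/2}\Delta_{x}u$, so the left-hand side of the equation for $v$ is an overall factor $\lambda^{-(N+2)/2}$ times the corresponding left-hand side for $u$ at $(x,t)$.

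Next I would treat the nonlocal nonlinearity. Substituting $z=\lambda(w-x_{0})$ in the inner integral produces $dz=\lambda^{N}dw$ and $|y-z|^{-\mu}=\lambda^{-\mu}|x-w|^{-\mu}$, while the inner $|v|^{2^{\ast}_{\mu}}$ contributes $\lambda^{-(N-2)2^{\ast}_{\mu}/2}$ and the outer $|v|^{2^{\ast}_{\mu}-2}v$ contributes $\lambda^{-(N-2)(2^{\ast}_{\mu}-1)/2}$. Collecting the powers yields
\begin{equation*}
\lambda^{N-\mu-(N-2)(2\cdot 2^{\ast}_{\mu}-1)/2}
=\lambda^{-(N+2)/2},
\end{equation*}
where the last equality is a one-line check using $2^{\ast}_{\mu}=(2N-\mu)/(N-2)$. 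Matching this with the factor coming from $v_{s}-\Delta_{y}v$ establishes the PDE invariance; the change of spatial domain to $\Omega_{\lambda}=x_{0}+\lambda^{-1}\Omega$ is dictated by the same substitution.

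For the norm identities I would apply the same change of variables globally. From $dy\,ds=\lambda^{N+2}\,dx\,dt$ together with $v_{s}^{2}=\lambda^{-(N+2)}u_{t}^{2}$ one obtains $\|v_{s}\|_{L^{2}((0,\eta)\times\Omega)}=\|u_{t}\|_{L^{2}((t_{0},t_{0}+\lambda^{-2}\eta)\times\Omega_{\lambda})}$. The same bookkeeping applied to $|\nabla_{y}v|^{2}=\lambda^{-N}|\nabla_{x}u|^{2}$ against $dy=\lambda^{N}dx$ preserves the Dirichlet integral, and in the Choquard integrand all the exponents of $\lambda$ cancel against $dy\,dz=\lambda^{2N}dx\,dw$ and $|y-z|^{-\mu}$, giving invariance of the double integral. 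The $L^{2}$-identity picks up the factor $\lambda$ because $|v|^{2}=\lambda^{-(N-2)}|u|^{2}$ but $dy=\lambda^{N}dx$, so the mismatch is $\lambda^{2}$ under the integral, i.e. $\lambda$ under the norm.

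The invariance of $J_{\mu}$ is then immediate from \eqref{Energy}, since both $A(\cdot)$ and $B(\cdot)$ are already shown to be preserved. The main obstacle is purely bookkeeping: one has to keep the exponents of $\lambda$ straight and remember that it is the specific conformal weight $(N-2)/2$, combined with the identity $(N-2)\cdot 2^{\ast}_{\mu}=2N-\mu$, that makes every term scale by the same power; no analytic difficulty beyond this arises, and continuity/smoothness of the transformations is inherited directly from that of $u$.
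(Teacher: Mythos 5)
Your proposal is correct and is exactly the direct change-of-variables and exponent bookkeeping that the paper has in mind (it states the proposition follows "by straightforward calculations" and gives no further detail); the key identity $(N-2)2^{\ast}_{\mu}=2N-\mu$ makes every term scale by $\lambda^{-(N+2)/2}$, as you verify, and the norm and energy identities follow from the same substitution. Nothing is missing.
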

	
	\begin{lem}\label{L3.5}
		Let $\left\lbrace u_{n}\right\rbrace\subset H^{1}_{0}(\Omega)\cap L^{\infty}(\Omega)$ be a bounded sequence satisfying
		\begin{equation}\label{L2}
			g_{n}:=\Delta u_{n}+\left( \int_{\Omega}\frac{|u_{n}(y)|^{2^{\ast}_{\mu}}}{|x-y|^{\mu}}dy\right) |u_{n}|^{2^{\ast}_{\mu}-2}u_{n}\rightarrow0\ \text{in}\ H^{-1}(\Omega)
		\end{equation}
		as $n\rightarrow\infty$. Then there exists $u\in H^{1}_{0}(\Omega)\cap L^\infty(\Omega)$ such that $u_{n}\rightharpoonup u$ weakly in $H^{1}_{0}(\Omega)$ and
		\begin{equation*}
			\text{either}\ u=0\ \text{or}\ A(u)\geq S^{\frac{2N-\mu}{N-\mu+2}}_{H,L}.
		\end{equation*}
	\end{lem}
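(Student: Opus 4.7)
The plan is to combine weak compactness in $H^1_0(\Omega)$, almost everywhere convergence, and the Hardy--Littlewood--Sobolev inequality to identify the weak limit as a solution of the associated stationary Choquard equation, and then to derive the alternative via a testing argument with $u$ itself.

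First I would extract a subsequence (still denoted $\{u_n\}$) such that $u_n\rightharpoonup u$ in $H^1_0(\Omega)$, $u_n\to u$ strongly in $L^q(\Omega)$ for every $q\in[1,2^*)$ (Rellich), and $u_n\to u$ a.e. in $\Omega$; then $u\in H^1_0(\Omega)$ by lower semicontinuity of the norm.

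The central step is passing to the limit in the approximate equation tested against $\varphi\in C_c^\infty(\Omega)$. The term $\int_\Omega\nabla u_n\cdot\nabla\varphi\,dx\to\int_\Omega\nabla u\cdot\nabla\varphi\,dx$ by weak convergence, and $\langle g_n,\varphi\rangle\to 0$ by assumption. For the nonlocal term, since $2^{\ast}_{\mu}\cdot\frac{2N}{2N-\mu}=2^{\ast}$, the sequence $\{|u_n|^{2^{\ast}_{\mu}}\}$ is bounded in $L^{2N/(2N-\mu)}(\Omega)$, and a.e. convergence upgrades this to $|u_n|^{2^{\ast}_{\mu}}\rightharpoonup |u|^{2^{\ast}_{\mu}}$ weakly in $L^{2N/(2N-\mu)}(\Omega)$. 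By the dual form of Lemma \ref{L-HLS}, convolution with $|x|^{-\mu}$ maps $L^{2N/(2N-\mu)}$ boundedly into $L^{2N/\mu}$, so $\int_\Omega|x-y|^{-\mu}|u_n(y)|^{2^{\ast}_{\mu}}\,dy\rightharpoonup \int_\Omega|x-y|^{-\mu}|u(y)|^{2^{\ast}_{\mu}}\,dy$ weakly in $L^{2N/\mu}(\Omega)$. Simultaneously, because $\varphi$ has compact support and $(2^{\ast}_{\mu}-1)\cdot\frac{2N}{2N-\mu}<2^{\ast}$ (using $N>2$), Rellich compactness combined with Vitali's theorem upgrades the a.e. limit of $|u_n|^{2^{\ast}_{\mu}-2}u_n\varphi$ to strong convergence in the dual space $L^{2N/(2N-\mu)}(\Omega)$. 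The weak-times-strong pairing then passes to the limit and identifies $u$ as a weak solution of the stationary Choquard equation, i.e. $u\in S_\mu$.

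The alternative is now immediate: testing the limit equation against $u\in H^1_0(\Omega)$ yields $A(u)=B(u)$. Combining with the variational characterisation \eqref{SHL} of $S_{H,L}$, namely $A(u)\geq S_{H,L}\,B(u)^{(N-2)/(2N-\mu)}$, and substituting $B(u)=A(u)$ produces $A(u)^{(N-\mu+2)/(2N-\mu)}\geq S_{H,L}$ provided $u\not\equiv 0$, which is precisely $A(u)\geq S_{H,L}^{(2N-\mu)/(N-\mu+2)}$. The conclusion $u\in L^\infty(\Omega)$ follows from standard elliptic bootstrapping on the stationary equation (Brezis--Kato followed by a Moser-type iteration), using that $\int_\Omega|x-y|^{-\mu}|u(y)|^{2^{\ast}_{\mu}}\,dy$ already lies in $L^{2N/\mu}(\Omega)$.

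The main obstacle is precisely the weak-limit step for the critical Choquard-type nonlinearity: both factors of the nonlocal term live in mutually dual critical Lebesgue spaces, so naive weak convergence in each is insufficient. The saving feature is the compact support of the test function $\varphi$, which allows the factor $|u_n|^{2^{\ast}_{\mu}-2}u_n\varphi$ to be handled by Rellich compactness in a sub-critical norm and then promoted, via uniform integrability on $\supp\varphi$, to strong convergence in the dual critical Lebesgue space; this pairs cleanly with the weak convergence of the Riesz potential, and the only genuine subtlety is careful bookkeeping of the HLS exponents throughout.
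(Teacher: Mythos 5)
Your argument is correct, but it follows a genuinely different route from the paper. You pass to the limit in the distributional equation itself: weak convergence of $|u_n|^{2^\ast_\mu}$ in $L^{2N/(2N-\mu)}$ (bounded plus a.e.\ convergence), the dual Hardy--Littlewood--Sobolev bound sending the Riesz potentials weakly into $L^{2N/\mu}$, and Vitali upgrading $|u_n|^{2^\ast_\mu-2}u_n\varphi$ to strong convergence in the dual exponent; this identifies $u$ as a weak solution of the stationary Choquard equation, and testing with $u$ gives the exact identity $A(u)=B(u)$, after which the $S_{H,L}$ characterisation yields the dichotomy. The paper instead never identifies $u$ as a stationary solution: it tests $g_n$ against $u_n$ to get $A(u_n)=B(u_n)+o(1)$, uses the $L^\infty$-bound and regularity theory to obtain compactness (convergence in $C(\bar\Omega)$, hence $B(u_n)\to B(u)$), and combines this with weak lower semicontinuity to get the inequality $A(u)\le B(u)$, which suffices for the same conclusion. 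Your route is more self-contained for the dichotomy --- it needs only weak $H^1_0$ convergence, a.e.\ convergence and HLS duality, and avoids the compactness step that in the paper leans on the $L^\infty$-bound --- at the cost of the more delicate exponent bookkeeping in the critical nonlocal term. One small remark on the $L^\infty$ part of the conclusion: your bootstrap via $V(x)=\int_\Omega|x-y|^{-\mu}|u(y)|^{2^\ast_\mu}dy\in L^{2N/\mu}$ is only immediately of Brezis--Kato type when $2N/\mu>N/2$, i.e.\ $\mu<4$; since the hypothesis already gives an $L^\infty$-bounded sequence, it is simpler (and covers all $0<\mu<N$) to note that the a.e.\ limit of an $L^\infty$-bounded sequence is itself in $L^\infty$, which is in effect how the paper reads the hypothesis.
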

	\begin{proof}
		From the definition of $J_{\mu}$ and (\ref{L2}), for any $\phi \in H^{1}_{0}(\Omega)$, one has
		\begin{equation}\label{L3}
			\begin{aligned}
				\vert\left\langle J^{\prime}_{\mu}(u_{n}),\phi\right\rangle \vert=&\left\vert \int_{\Omega}\Delta u_{n}\phi dx
				 +\int_{\Omega}\int_{\Omega}\frac{|u_{n}(y)|^{2^{\ast}_{\mu}}}{|x-y|^{\mu}}dy|u_{n}(x)|^{2^{\ast}_{\mu}-2}u_{n}(x)\phi dx\right\vert\\
				\leq& \Vert g_{n}\Vert_{H^{-1}(\Omega)}\Vert \phi\Vert_{H^1_0(\Omega)}\rightarrow0\ \text{as}\ n\rightarrow\infty,
			\end{aligned}
		\end{equation}
		where $J^{\prime}_{\mu}$ is the Fr\'echet derivative of $J_{\mu}$ at $u$. Taking $\phi =u_{n}$ in (\ref{L3}), we have
		\begin{equation}\label{L4}
			A(u_{n})-B(u_{n})=o(1)\ \text{as}\ n\rightarrow\infty.
		\end{equation}
		Next, since $\left\lbrace u_{n}\right\rbrace_{n\in \mathbb N}\subset H^{1}_{0}(\Omega)$ is a bounded sequence, then there exists $u\in H^{1}_{0}(\Omega)\cap L^\infty(\Omega)$ such that up to a subsequence (still denoted $\{u_n\}_{n\in \mathbb N}$):
		\begin{equation*}
			u_{n}\rightharpoonup u\ \text{weakly in}\ H^{1}_{0}(\Omega).
		\end{equation*}
		Hence, we have
		\begin{equation}\label{L6}
			A(u)\leq\liminf_{n\rightarrow\infty} A(u_{n}).
		\end{equation}
		%Let $\eta\in C^{\infty}_{0}([0,\infty))$ be a function satisfying $\eta(t)=0$ for $t\geq 2$, $\eta(t)=1$ for $t\leq1$ and $0\leq\eta(t)\leq1$. Define $\varphi_{R}\in C^{\infty}_{0}(\mathbb{R}^{N})$ by
		%\begin{equation*}
			%\varphi_{R}(x):=\eta(\frac{|x|}{R}).
		%\end{equation*}
		%Similar to \cite[Proposition 2.4]{Ishiwata} and by the Hardy-Littlewood-Sobolev inequality, we derive
		%\textcolor{blue}{[J.G. : I don't see why $\phi_R$ is required since for $R$ large $\phi_R\equiv\, 1$ on $\Omega$, furthermore $L^\infty$ bound of $\{u_n\}$ is enough to get : ]}
		Since $L^\infty$ boundedness of $\{u_n\}$, by the regularity theory, there exists a subsequence (still denoted by $\left\lbrace u_{n}\right\rbrace $) such that $u_{n}\rightarrow u$ in $C(\bar{\Omega})$ as $n\rightarrow\infty$ and
		\begin{equation}\label{f3}
			u_{n}\rightarrow u\ \text{in}\ L^{2^{\ast}}
		\end{equation}
		as $n\rightarrow\infty$.
		Furthermore, by the Hardy-Littlewood-Sobolev inequality, we also know that
		\begin{equation*}
			B(u_{n})=B(u)+o(1).
		\end{equation*}
		Combining with (\ref{L4}) and (\ref{L6}), one has
		\begin{equation}\nonumber
				A(u)\leq\liminf_{n\rightarrow\infty} A(u_{n})\\
				 =\liminf_{n\rightarrow\infty} B(u_{n})+o(1)=B(u)+o(1).
		\end{equation}
		Furthermore, if $u\not=0$, it follows from (\ref{SHL}) that
		\begin{equation*}
			A(u)\geq S^{\frac{2N-\mu}{N-\mu+2}}_{H,L}.
		\end{equation*}
	The proof is now completed.
	\end{proof}
	
	\textbf{Proof of Theorem \ref{T-Global existence}.}
	First, from Proposition \ref{P-Invariant set}, we know that $u(t)\in W_{\mu}$ for $t\in[t_{0},T_{\max})$ provided $u(t_{0})\in W_{\mu}$ for some $t_{0}\in [0,T_{\max})$. Furthermore, from \eqref{bbb5} and \eqref{bbb6} together with Propositions \ref{P-Finite time} and again \ref{P-Invariant set},  we directly derive that if $u(t_0)\in W_\mu$, then $T_{\max}=+\infty$. Next, without loss of generality, we assume that $u(t)\in W_{\mu}\setminus\{0\}$ for $t_0\leq t<\infty$, since $u\equiv\,0$ is an obvious case.
	By the definition of $W_{\mu}$, we derive
	\begin{equation}\label{d1}
		J_{\mu}(u)=\frac{1}{2}A(u)
		-\frac{1}{22^{\ast}_{\mu}}
		B(u)
		\geq\left(\frac{1}{2}-\frac{1}{22^{\ast}_{\mu}} \right)B(u)>0,\ \forall t\in[t_{0},\infty).
	\end{equation}
	Next, by Proposition \ref{P-Energy Identity} and (\ref{d1}), there  exists $c\geq0$ such that
	\begin{equation}\label{d11}
		\lim_{t\rightarrow \infty}J_{\mu}(u(t))=c.
	\end{equation}
	We claim that
	\begin{equation}\label{d10}
		\lim_{t\rightarrow \infty}\Vert u(t)\Vert_{\infty}=0.
	\end{equation}
	On the contrary, there exists $\delta\in(0,\infty]$ such that
	\begin{equation}\label{d2}
		\limsup_{t\rightarrow \infty}\Vert u(t)\Vert_{\infty}=\delta.
	\end{equation}
	Therefore, there exists a sequence $\left\lbrace t_{n}\right\rbrace $ with $t_{n}\rightarrow \infty$ such that $\Vert u(t_{n})\Vert_{\infty}\rightarrow\delta$ as $n\rightarrow\infty$ and
	\begin{equation*}
		\Vert u(t_{n})\Vert_{\infty}\geq\frac{1}{2}\sup_{t\in[0,t_{n}]}\Vert u(t)\Vert_{\infty}
	\end{equation*}
	for any $n\in\mathbf{N}$. And let $\left\lbrace x_{n}\right\rbrace $ be a sequence such that
	\begin{equation*}
		\frac{1}{2}\Vert u(t_{n})\Vert_{\infty}
		\leq|u(x_{n},t_{n})|.
	\end{equation*}
	Set
	\begin{equation*}
		\lambda_{n}:=\Vert u(t_{n})\Vert^{\frac{2}{N-2}}_{\infty}.
	\end{equation*}
According to \eqref{Scaling}, we define
	\begin{equation*}
		u_{n}(y,s):=\lambda_{n}^{-\frac{N-2}{2}}u(x,t),
	\end{equation*}
	where $y=\lambda_{n}(x-x_{n})$ and $s=\lambda_{n}^{2}(t-t_{n})$. Then letting $\Omega_{\lambda_{n}}:=\lambda_{n}(\Omega-x_{n})$, it is easy to see that $\lambda_{n}\rightarrow\delta^{2/(N-2)}$ as $n\rightarrow\infty$, there exists $C>0$ such that
	\begin{equation}\label{d3}
		\sup_{s\in[-1,0]}\Vert u_{n}(s)\Vert_{\infty,\Omega_{\lambda_{n}}}
		\leq\sup_{t\in[0,t_{n}]}\Vert \lambda_{n}^{-\frac{N-2}{2}}u(t)\Vert_{\infty,\Omega}\,
		\leq C
	\end{equation}
	and
	\begin{equation}\label{d3-1}
		\big\vert u_{n}(y,s)|_{y=0,s=0}\big\vert
		=\big\vert \lambda_{n}^{-\frac{N-2}{2}}u(x_{n},t_{n})\big\vert
		\geq\frac{1}{2}.
	\end{equation}
	By Proposition \ref{P-Energy Identity} and \ref{P-Scaling properties}, we also have
	\begin{equation}\label{d3-2}
		\begin{aligned}
		\left\Vert\frac{\partial u_{n}}{\partial s}\right\Vert^2_{L^{2}((-1,0)\times\Omega_{\lambda_{n}})}
			=&\left\Vert\frac{\partial u}{\partial t}\right\Vert^2_{L^{2}((t_{n}-1/\lambda_{n}^{2},t_{n})\times\Omega)}\\
			=&J_{\mu}(u(t_{n}-1/\lambda_{n}^{2}))-J_{\mu}(u(t_{n})).
		\end{aligned}
	\end{equation}
	%where $\Omega_{\lambda_{n}}:=\lambda_{n}(\Omega-x_{n})$.
	If $\delta=\infty$, we have
	\begin{equation*}
		t_{n}-1/\lambda_{n}^{2}\rightarrow \infty
	\end{equation*}
	provided $\lambda_{n}\rightarrow\delta^{2/(N-2)}$ and $t_{n}\rightarrow \infty$. Furthermore, by (\ref{d3-2}) and \eqref{d11}, we have
	\begin{equation}\label{d3-3}
		\left\Vert\frac{\partial u_{n}}{\partial s}\right\Vert_{L^{2}((-1,0)\times\Omega_{\lambda_{n}})}\rightarrow0.
	\end{equation}
%	which implies that $u_{n}$ is independent of $s$ \textcolor{blue}{[J.G. : I don't understand this conclusion...what does it bring?]}.
	If contrary $\delta<\infty$, 
	%by Proposition \ref{P-Finite time}, we obtain $T_{\max}=\infty$. Furthermore, 
	one has
	\begin{equation*}
		t_{n}-1/\lambda_{n}^{2}=t_{n}-1/\delta^{4/(N-2)}+o(1)=\infty.
	\end{equation*}
	By (\ref{d3-2}) again, one sees that (\ref{d3-3}) holds also.
	
	By the fact that $t_{n}\rightarrow \infty$ and $t_{n}-1/\lambda_{n}^{2}\rightarrow \infty$, we have $t_{n}+s/\lambda_{n}^{2}\in(t_{0},\infty)$ for $n$ large enough. Further, from \eqref{energy-identity1}, Proposition  \ref{P-Invariant set} and $u(t_{0})\in W_{\mu}$, we derive that $u(t_{n}+s/\lambda_{n}^{2})\in W_{\mu}$ and $J_{\mu}(u(t_{n}+s/\lambda_{n}^{2}))\leq J_{\mu}(u(t_{0}))$. Therefore, it follows from  Proposition \ref{P-Scaling properties} that
	\begin{equation}\label{AAA}
		\begin{aligned}
			m_{\mu}>J_{\mu}(u(t_{0}))\geq& J_{\mu}(u(t_{n}+s/\lambda_{n}^{2}))
			\geq\left(\frac{1}{2}-\frac{1}{22^{\ast}_{\mu}} \right)A(u(t_{n}+s/\lambda_{n}^{2}))\\
			=&\left(\frac{1}{2}-\frac{1}{22^{\ast}_{\mu}} \right)A(u_{n}(s)),\ \forall s\in[-1,0],
		\end{aligned}
	\end{equation}
	which implies that $\{u_{n}(s)\}_{n\in\mathbb N}$ is bounded in $H^{1}_{0}(\Omega)$ for all $s\in[-1,0]$.
	
	Next, by (\ref{d3}), we have $\left\lbrace u_{n}\right\rbrace_{n\in \mathbb N} $ is bounded in $L^{\infty}(-1,0;L^{\infty}(\Omega_{\lambda_{n}}))$. By $L^{p}$ estimate for parabolic operator (see \cite{Lieberman}), we obtain that $\left\lbrace u_{n}\right\rbrace $ is a bounded sequence in $W^{2,1}_{p,loc}((-1,0)\times\Omega_{\lambda_{n}})$ for $p$ large enough. Therefore, it follows from the Sobolev Embedding Theorem that $\left\lbrace u_{n}\right\rbrace_{n\in \mathbb N} $ is a bounded sequence in $C^{0,\alpha;0,\alpha/2}_{loc}((-1,0)\times\Omega_{\lambda_{n}})$ for all $\alpha\in(0,1)$. By Ascoli-Arzela Theorem, there exists a subsequence, still denoted by $\left\lbrace u_{n}\right\rbrace_{n\in \mathbb N}$, and $v$ such that
	\begin{equation}\label{d7}
		u_{n}\rightarrow v\ \text{in}\ C_{loc}((-1,0]\times\Omega_{\lambda_{n}})
	\end{equation}
	as $n\rightarrow\infty$.
	By (\ref{d3-1}) and (\ref{d7}), we infer that $\big|v(y,s)|_{y=0,s=0}\big|\geq1/2$. Thus, $v\not\equiv0$.
	
	It follows from (\ref{d3}) and (\ref{d3-3}) that for some $\kappa\in[-1,0]$
	\begin{equation*}
		\Delta u_{n}(\kappa)+\left( \int_{\Omega_{\lambda_{n}}}\frac{|u_{n}(z,\kappa)|^{p}}{|y-z|^{\mu}}dz\right) |u_{n}(\kappa)|^{2^{\ast}_{\mu}-2}u_{n}(\kappa)=\frac{\partial u_{n}(\kappa)}{\partial s}\rightarrow0\ \text{in}\ L^{2}(\Omega_n)
	\end{equation*}
	as $n\rightarrow\infty$ and
	\begin{equation*}
		\Vert u_{n}(\kappa)\Vert_{\infty,\,\Omega_{\lambda_n}}\leq C
	\end{equation*}
	for all $n\in\mathbf{N}$.
	
	Furthermore, since $\{u_{n}(s)\}$ is bounded in $H^{1}_{0}(\Omega)$ for all $s\in[-1,0]$, there exists a $w\in H^{1}_{0}$ such that
	\begin{equation}\label{d8}
		u_{n}(\kappa)\rightharpoonup w\ \text{weakly in}\ H^{1}_{0}.
	\end{equation}
	And by (\ref{d7}) and (\ref{d8}), one has $w=v\not\equiv0$.
	Furthermore, by Lemma \ref{L3.5}, we have
	\begin{equation}\label{d9}
		A(w)\geq S^{\frac{2N-\mu}{N-\mu+2}}_{H,L}.
	\end{equation}
	Therefore, by (\ref{AAA}) and (\ref{d9}), we deduce
	\begin{equation*}
		S^{\frac{2N-\mu}{N-\mu+2}}_{H,L}
		\leq A(w)
		\leq\liminf_{n\rightarrow\infty}A(u_{n})< S^{\frac{2N-\mu}{N-\mu+2}}_{H,L},
	\end{equation*}
	which is a contradiction. Thus, we obtain that $\lim_{t\rightarrow \infty}\Vert u(t)\Vert_{\infty}=0$. 
	%Furthermore, we deduce that $T_{\max}=+\infty$ by Proposition \ref{P-Finite time}.
	
	Next, by Proposition \ref{P-Energy Identity}, we know that $u(t)\in W_{\mu}$ for all $t\in[t_{0},\infty)$, combining with (\ref{energy-identity2}), we deduce that
 \begin{equation}\label{L2est}
		\frac{1}{2}\frac{d}{dt}\Vert u(t)\Vert^{2}_{2}=-I_{\mu}(u(t))<0.
	\end{equation}
	%and so there exist a $\xi\geq0$ such that
	Therefore,
	\begin{equation}\label{d13-1}
			\Vert u(t)\Vert^{2}_{2}\rightarrow 0
	\end{equation}
	as $t\rightarrow\infty$. Furthermore, by (\ref{d10}) and (\ref{d13-1}), one has
	\begin{equation}\label{d14}
		u(t)\rightarrow0\ \text{in}\ L^{r}(\Omega),\ \forall r\in [2,\infty]. 
		%\textcolor{blue}{[J.G. \mbox{if }\Omega \mbox{ is bounded, this is trivial}]}
	\end{equation}
	By (\ref{d11}) , we deduce that
	\begin{equation}\label{d12}
		\lim_{t\rightarrow+\infty}J_{\mu}(u(t))=c\geq0.
	\end{equation}
	From \eqref{L2est} and the boundedness of $\|u(t)\|_{2}$, we infer that there exists $\left\lbrace t_{n}\right\rbrace $ with $t_{n}\rightarrow\infty$ as $n\rightarrow\infty$ such that
	\begin{equation}\label{d13}
		A(u(t_{n}))
		=B(u(t_{n}))+o(1)\ \text{as}\ n\rightarrow\infty.
	\end{equation}
%	Indeed, by Proposition \ref{P-Energy Identity} and (\ref{d12}), we know that
	%\begin{equation*}
	%	\int_{0}^{\infty}\Vert u_{t}(\tau)\Vert^{2}_{2}d\tau\leq J_{\mu}(u_{0})<+\infty.
%	\end{equation*}
%	Thus, there exists a sequence $\left\lbrace t_{n}\right\rbrace $ with $t_{n}\rightarrow+\infty$ as $n\rightarrow+\infty$ such that
%	\begin{equation}\label{AAAA}
	%	\Vert u_{t}(t_{n})\Vert_{2}\rightarrow0\ \text{as}\ n\rightarrow+\infty.
%	\end{equation}
%	Multiplying problem \eqref{E} by $u(t_{n})$ and integrating over $\Omega$,  by (\ref{AAAA}), we show that
%	\begin{equation*}
	%	|-A(u(t_{n}))
	%	+B(u(t_{n}))|=(u_{t}(t_{n}),u(t_{n}))\leq\Vert u(t_{n})\Vert_{2}\Vert u_{t}(t_{n})\Vert_{2}\rightarrow0,
%	\end{equation*}
%	which implies that (\ref{d13}) hold.
	By (\ref{d12}) and (\ref{d13}), we derive
	\begin{equation*}
		 c=\lim_{n\rightarrow+\infty}J_{\mu}(u(t_{n}))=\lim_{n\rightarrow+\infty}\left(\frac{1}{2}-\frac{1}{22^{\ast}_{\mu}} \right) B(u(t_{n})).
	\end{equation*}
	By the Hardy-Littlewood-Sobolev inequality and (\ref{d14}) with $r=2^{\ast}$, we know that
	\begin{equation*}
		\lim_{t\rightarrow\infty}B(u(t))=0.
	\end{equation*}
	Thus, $c=0$ and $\lim_{t\rightarrow+\infty}J_{\mu}(u(t))=0$.
	Furthermore, we have
	\begin{equation*}
		A(u(t))
		=2J_{\mu}(u(t))+\frac{1}{2^{\ast}_{\mu}}B(u(t))\rightarrow0.
	\end{equation*}
The proof is completed.
	$\hfill{} \Box$
	
	\begin{Rem}
		The argument of $A(u(t))\rightarrow0$ as $t\rightarrow\infty$ can also be proved by the method of \cite{Zhang-Yang} (or see \cite{Ikehata-Suzuki}) and $A(u(t))$ has decay with exponential type.
	\end{Rem}

	\section{Proof of Theorem \ref{T-Blow up infinite}}
	
	In this section, we prove that the  blow-up in infinite time of the global solutions occurs under suitable conditions.  First, assuming that $T_{\max}=+\infty$, we claim that
	\begin{equation}\label{y1}
		J_{\mu}(u(t))\geq 0
	\end{equation}
	for all $t\geq0$. Indeed, by the definition $J_{\mu}(u)$ and $I_{\mu}(u)$, one has
	\begin{equation*}
		J_{\mu}(u)=\frac{1}{2}A(u)
		-\frac{1}{22^{\ast}_{\mu}}B(u)\geq\frac{1}{2}I_{\mu}(u).
	\end{equation*}
	If there exist $t_{0}>0$ such that $J_{\mu}(u(t_{0}))<0$, then $u(t_{0})\in V_{\mu}$, and then $T_{\max}<+\infty$ by Theorem \ref{T-Blow up}, from which we get a contradiction.
	
	By Proposition \ref{P-Energy Identity} and (\ref{y1}), we know that
	\begin{equation*}
		\int_{0}^{\infty}\Vert u_{t}(\tau)\Vert^{2}_{2}d\tau\leq J_{\mu}(u_{0})<+\infty.
	\end{equation*}
	Furthermore, there exists a sequence $\left\lbrace t_{n}\right\rbrace $ with $t_{n}\rightarrow+\infty$ as $n\rightarrow+\infty$ such that
	\begin{equation}\label{y2}
		\Vert u_{t}(t_{n})\Vert_{2}\rightarrow0\ \text{as}\ n\rightarrow+\infty.
	\end{equation}
%\textcolor{blue}{[J.G. Instead of \eqref{y2}, enough to assume $I_\mu(u(t_n))\to 0$, such $\{t_n\}$ exists by \eqref{L2est}]}
	Moreover, from  \eqref{d13-1} (see also\cite{Cazenave-Lions}), we also have
%	By (3.21), we have $$-I_{\mu}(u(t))=\frac{1}{2}\frac{d}{dt}\Vert u(t)\Vert^{2}_{2}\leq\int_{\Omega}|u(t)|^{2}\int_{\Omega}|u_{t}(t)|^{2}=o(1)$$ as $t\rightarrow\infty$. Therefore, there exist a sequence $\left\lbrace t_{n}\right\rbrace $ with $t_{n}\rightarrow+\infty$ as $n\rightarrow+\infty$ such that $I_\mu(u(t_n))\to 0$
	\begin{equation}\label{y2-1}
		\Vert u(t_{n})\Vert_{2}\leq C
	\end{equation}
	for some constant $C>0$.
	Then, we state and prove the following two Lemmas.
	\begin{lem}\label{L4.1}
		Let $u$ be a global (classical)  solution of problem \eqref{E} with $T_{\max}=\infty$. Let $\left\lbrace t_{n}\right\rbrace $ be a sequence satisfying (\ref{y2}). Then there exists a constant $\tilde{C}>0$ such that
		\begin{equation*}
			A(u(t_{n}))\leq \tilde{C},\
			B(u(t_{n}))\leq \tilde{C}
		\end{equation*}
		and
		\begin{equation*}
			I_{\mu}(u(t_{n}))\rightarrow0\ \text{as}\ n\rightarrow\infty.
		\end{equation*}
		Furthermore, we also have
		\begin{equation}\label{x}
			\lim_{n\rightarrow+\infty}A(u(t_{n}))
			=\lim_{n\rightarrow+\infty}B(u(t_{n}))=C_{0},
		\end{equation}
		where $C_{0}$ is defined in (\ref{C_{0}}).
	\end{lem}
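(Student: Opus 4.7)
\textbf{Proof proposal for Lemma \ref{L4.1}.} My plan is to exploit the energy identity (\ref{energy-identity2}) together with Cauchy--Schwarz to transfer the smallness of $\Vert u_t(t_n)\Vert_2$ to smallness of $I_\mu(u(t_n))$, and then to recover the boundedness of $A$ and $B$ and their common limit from the purely algebraic relations between $J_\mu$, $I_\mu$, $A$ and $B$.

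First, I would multiply \eqref{E} by $u$ and integrate over $\Omega$ (equivalently, invoke (\ref{energy-identity2})) to obtain
\begin{equation*}
I_\mu(u(t))=-\int_\Omega u(t)\,u_t(t)\,dx,\qquad t\in[0,\infty),
\end{equation*}
so that by Cauchy--Schwarz
\begin{equation*}
|I_\mu(u(t_n))|\leq \Vert u(t_n)\Vert_2\,\Vert u_t(t_n)\Vert_2.
\end{equation*}
Since (\ref{y2-1}) gives $\Vert u(t_n)\Vert_2\leq C$ and (\ref{y2}) gives $\Vert u_t(t_n)\Vert_2\to0$, this already yields $I_\mu(u(t_n))\to0$.

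Next, I would use the identity
\begin{equation*}
J_\mu(u)=\tfrac12 I_\mu(u)+\tfrac{2^{\ast}_\mu-1}{2\cdot 2^{\ast}_\mu}B(u),
\end{equation*}
which follows directly from \eqref{Energy} and \eqref{Nehari-Energy}. Because $J_\mu(u(t))$ is non-increasing along the flow (Proposition \ref{P-Energy Identity}) and nonnegative by \eqref{y1}, one has $J_\mu(u(t_n))\leq J_\mu(u_0)$. Combined with the already established boundedness of $I_\mu(u(t_n))$, this gives the uniform bound $B(u(t_n))\leq \tilde C$, and then $A(u(t_n))=I_\mu(u(t_n))+B(u(t_n))\leq \tilde C$ (adjusting $\tilde C$ if necessary).

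Finally, to identify the limits, I would pass to the limit in the displayed identity above: since $J_\mu(u(t))$ is monotone decreasing and nonnegative, $\lim_{n\to\infty}J_\mu(u(t_n))=\lim_{t\to\infty}J_\mu(u(t))=\tfrac{2^{\ast}_\mu-1}{2\cdot 2^{\ast}_\mu}C_0$ by definition of $C_0$ in \eqref{C_{0}}. Using $I_\mu(u(t_n))\to0$, this gives $\lim_{n\to\infty}B(u(t_n))=C_0$, and the relation $A(u(t_n))=I_\mu(u(t_n))+B(u(t_n))$ then yields $\lim_{n\to\infty}A(u(t_n))=C_0$, completing \eqref{x}.

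The main delicate point has in fact already been handled upstream: it is the a priori $L^2$-boundedness \eqref{y2-1} of $u(t_n)$, which requires the sign analysis of $I_\mu$ along the trajectory (or the Cazenave--Lions argument cited in the excerpt). Given that bound in hand, the rest of the proof is a clean algebraic manipulation of $J_\mu$ and $I_\mu$; no compactness or PDE regularity is invoked in this step.
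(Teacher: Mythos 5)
Your proposal is correct and follows essentially the same route as the paper: both multiply \eqref{E} by $u$ (i.e.\ use \eqref{energy-identity2}) and apply Cauchy--Schwarz with \eqref{y2} and \eqref{y2-1} to get $I_\mu(u(t_n))=A(u(t_n))-B(u(t_n))\to0$, then use the monotonicity and nonnegativity of $J_\mu(u(t))$ together with the algebraic identity relating $J_\mu$, $I_\mu$ and $B$ to obtain the uniform bounds and identify both limits with $C_0$ via the definition \eqref{C_{0}}. The only cosmetic difference is that the paper expresses $J_\mu(u(t_n))$ as $\frac{2^{\ast}_\mu-1}{2\cdot 2^{\ast}_\mu}A(u(t_n))+o(1)$ rather than through your $J_\mu=\frac12 I_\mu+\frac{2^{\ast}_\mu-1}{2\cdot 2^{\ast}_\mu}B$ decomposition, which is equivalent.
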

	\begin{proof}
		Multiplying problem \eqref{E} by $u(t)$ and integrating over $\Omega$, we show that
		\begin{equation}\label{x1}
			(u_{t}(t),u(t))=-A(u(t))
			+B(u(t)).
		\end{equation}
		Since $\left\lbrace t_{n}\right\rbrace $ satisfies (\ref{y2}), it follows from \eqref{y2-1} and (\ref{x1}) that
		\begin{equation*}
			\left| -A(u(t_{n}))
			+B(u(t_{n}))\right|
			\leq\Vert u(t_{n})\Vert_{2}\Vert u_{t}(t_{n})\Vert_{2}\rightarrow0
		\end{equation*}
		as $n\rightarrow+\infty$, which implies that
		\begin{equation}\label{x2}
			A(u(t_{n}))
			=B(u(t_{n}))+o(1)\ \text{as}\ n\rightarrow+\infty.
		\end{equation}
		Next, by the definition of $J_{\mu}$ and (\ref{x2}), one has
		\begin{equation}\label{x3}
			J_{\mu}(u(t_{n}))=\frac{1}{2}A(u(t_{n}))
			-\frac{1}{22^{\ast}_{\mu}}
			B(u(t_{n}))
			=\frac{2^{\ast}_{\mu}-1}{22^{\ast}_{\mu}}A(u(t_{n}))+o(1).
		\end{equation}
		By Proposition \ref{P-Energy Identity}, we also have
		\begin{equation}\label{x4}
			J_{\mu}(u(t_{n}))=J_{\mu}(u_{0})-\int_{0}^{t_{n}}\Vert u_{t}(\tau)\Vert^{2}_{2}d\tau\leq J_{\mu}(u_{0}).
		\end{equation}
		Therefore, by (\ref{x2})-(\ref{x4}), there exists a constants $\tilde{C}>0$ such that
		\begin{equation*}
			A(u(t_{n}))\leq \tilde{C}\ \text{and}\ B(u(t_{n}))\leq \tilde{C}.
		\end{equation*}
		And by (\ref{x2}) and definition of $I_{\mu}$, we also have
		\begin{equation*}
			I_{\mu}(u(t_{n}))\rightarrow0\ \text{as}\ n\rightarrow\infty.
		\end{equation*}
		Here, we set
		\begin{equation*}
			C_{1}:=\lim_{n\rightarrow+\infty}A(u(t_{n}))\ \text{and}\ C_{2}:=\lim_{n\rightarrow+\infty}B(u(t_{n})).
		\end{equation*}
		It follows from the definition of $C_{0}$ and (\ref{x2})-(\ref{x3}) that $C_{1}=C_{2}=C_{0}$. The proof is completed.
	\end{proof}
	
	\begin{lem}\label{L4.2}
		Suppose that $C_{0}=0$. Then for some $t_{1}>0$, there holds
		\begin{equation*}
			\int_{t_{1}}^{+\infty}\Vert \Delta u(t)\Vert^{2}_{2}dt<+\infty.
		\end{equation*}
	\end{lem}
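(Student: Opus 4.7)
The plan is to exploit $C_0 = 0$ to force the solution eventually into the stable set $W_\mu$, and then derive the integrability of $\|\Delta u\|_2^2$ directly from the parabolic equation. By Lemma \ref{L4.1} there is a sequence $t_n \to \infty$ along which $A(u(t_n)), B(u(t_n)) \to 0$ and $J_\mu(u(t_n)) \to 0$. The Hardy--Littlewood--Sobolev bound $B(u) \le S_{H,L}^{-2^{\ast}_{\mu}} A(u)^{2^{\ast}_{\mu}}$ forces $I_\mu(u(t_n)) = A(u(t_n)) - B(u(t_n)) > 0$ for $n$ large (outside the trivial case $u(t_n) \equiv 0$), while $J_\mu(u(t_n)) < m_\mu$ since $m_\mu > 0$. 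Thus $u(t_n) \in W_\mu$ for large $n$. Setting $t_1 := t_n$ for such an $n$, Proposition \ref{P-Invariant set} and Theorem \ref{T-Global existence} yield $u(t) \in W_\mu$, $A(u(t)) \to 0$, and $\|u(t)\|_\infty \to 0$ on $[t_1, \infty)$.

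I next rewrite the equation as $\Delta u = u_t - f(u)$, so that
\[
\int_{t_1}^\infty \|\Delta u(t)\|_2^2 \, dt \le 2 \int_{t_1}^\infty \|u_t(t)\|_2^2 \, dt + 2 \int_{t_1}^\infty \|f(u(t))\|_2^2 \, dt.
\]
The first term on the right is finite by \eqref{energy-identity1} (together with $J_\mu \ge 0$). For the nonlinear term I aim to establish
\[
\|f(u)\|_2 \le \|u\|_\infty^{2^{\ast}_{\mu}-1} \bigl\| |u|^{2^{\ast}_{\mu}} * |x|^{-\mu} \bigr\|_2 \le C \|u\|_\infty^{2^{\ast}_{\mu}-1} \|u\|_{2^*}^{2^{\ast}_{\mu}} \le C \|u\|_\infty^{2^{\ast}_{\mu}-1} A(u)^{2^{\ast}_{\mu}/2},
\]
using Young's convolution inequality with exponents $p = \tfrac{2N}{2N-\mu}$ and $q = \tfrac{2N}{N+\mu}$ (which satisfy $\tfrac{1}{p}+\tfrac{1}{q}=\tfrac{3}{2}$) together with the Sobolev embedding $H^1_0(\Omega) \hookrightarrow L^{2^*}(\Omega)$. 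The condition $\mu < N$ guarantees $q\mu < N$, so that $|x|^{-\mu} \in L^q_{\mathrm{loc}}(\mathbb{R}^N)$ on any ball containing $\Omega - \Omega$, making Young applicable.

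To conclude, I enlarge $t_1$ if necessary so that $\|u(t)\|_\infty \le M$ and $S_{H,L}^{-2^{\ast}_{\mu}} A(u(t))^{2^{\ast}_{\mu}-1} \le \tfrac{1}{2}$ for all $t \ge t_1$. The second estimate yields $I_\mu(u(t)) \ge A(u(t))/2$, whence integrating \eqref{energy-identity2} from $t_1$ to $\infty$ gives
\[
\int_{t_1}^\infty A(u(t)) \, dt \le 2 \int_{t_1}^\infty I_\mu(u(t)) \, dt = \|u(t_1)\|_2^2 - \lim_{t \to \infty} \|u(t)\|_2^2 \le \|u(t_1)\|_2^2.
\]
Since $A(u(t)) \le \delta$ on $[t_1, \infty)$, I have $A(u)^{2^{\ast}_{\mu}} \le \delta^{2^{\ast}_{\mu}-1} A(u)$; combined with the uniform $L^\infty$ bound on $u$ this yields $\int_{t_1}^\infty \|f(u)\|_2^2 \, dt < \infty$ and closes the argument. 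The main technical obstacle is the convolution estimate above: one must choose $(p, q)$ in Young's inequality so that both the local integrability of $|x|^{-\mu}$ and the matching with Sobolev embedding hold simultaneously, which is possible precisely because $0 < \mu < N$.
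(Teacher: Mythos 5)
Your argument is correct, and its first half (using $C_0=0$ and Lemma \ref{L4.1} to place $u(t_n)$ in $W_\mu$, then invoking Proposition \ref{P-Invariant set} and Theorem \ref{T-Global existence}) coincides with the paper's opening step; where you genuinely diverge is in how the nonlinear term is controlled. The paper estimates $\Vert f(u)\Vert_2\leq \Vert A^{\alpha}u\Vert_2^{2^{\ast}_{\mu}}\Vert A^{\beta}u\Vert_2^{2^{\ast}_{\mu}-1}$ via the fractional-power bounds of Lemma \ref{L-Nonlinear estimate}, interpolates $A^{\alpha},A^{\beta}$ between $A^{1/2}$ and $A$, and absorbs the resulting factor $C\Vert\Delta u\Vert_2^2\Vert\nabla u\Vert_2^{4(2^{\ast}_{\mu}-1)}$ into the left-hand side using only the $H^1_0$-decay; this yields the stronger pointwise-in-time bound $\tfrac14\Vert\Delta u(t)\Vert_2^2\leq\Vert u_t(t)\Vert_2^2$ for $t>t_1$, and integrability then follows from \eqref{energy-identity1} alone. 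You instead use the $L^{\infty}$-decay from Theorem \ref{T-Global existence} together with Young's convolution inequality (with $p=\tfrac{2N}{2N-\mu}$, $q=\tfrac{2N}{N+\mu}$, valid on the bounded set $\Omega-\Omega$ since $\mu<N$; the paper's dual estimate \eqref{dual HLS} would serve equally well) and Sobolev embedding to get $\Vert f(u)\Vert_2\leq C\Vert u\Vert_{\infty}^{2^{\ast}_{\mu}-1}A(u)^{2^{\ast}_{\mu}/2}$, and you then supply the missing time-integrability of $A(u(t))$ through \eqref{energy-identity2} and the coercivity $I_\mu(u)\geq A(u)/2$ once $A(u(t))$ is small. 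Both routes are sound: yours is more elementary (no fractional powers, no interpolation/absorption) but leans on the full $L^{\infty}$ convergence and on \eqref{energy-identity2}, whereas the paper's needs only $\Vert\nabla u(t)\Vert_2\to0$ and delivers the stronger conclusion $\Vert\Delta u(t)\Vert_2\leq 2\Vert u_t(t)\Vert_2$ for large $t$, not merely finiteness of the integral.
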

	\begin{proof}
		If $C_{0}=0$, by Lemma \ref{L4.1} and the property of $W_{\mu}$ in Proposition \ref{P-Properties}, one has $u(t_{n})\in W_{\mu}$ for $n$ large enough. And then it follows from Theorem \ref{T-Global existence} that
		\begin{equation}\label{u0}
			\Vert \nabla u(t)\Vert_{2}\rightarrow0\ \text{as}\ t\rightarrow\infty.
		\end{equation}
		Next, by problem $\eqref{E}$ and similar to the proof in Lemma \ref{L-Nonlinear estimate}, we derive for a.e. $t\geq 0$
		\begin{equation}\label{u1}
			\begin{aligned}
				\Vert \Delta u(t)\Vert^{2}_{2}
				\leq&2\left( \Vert u_{t}(t)\Vert^{2}_{2}+\int_{\Omega}|f(u(t))|^{2}dx\right)\\
				\leq&2\left( \Vert u_{t}(t)\Vert^{2}_{2}
				+\Vert A^{\alpha}u(t)\Vert^{22^{\ast}_{\mu}}_{2}\Vert A^{\beta}u(t)\Vert^{2(2^{\ast}_{\mu}-1)}_{2}\right)
			\end{aligned}
		\end{equation}
	where $\alpha$, $\beta$ are given in \eqref{Index} with $\theta$ large.
		Furthermore, by interpolation and (\ref{u1}), one has
		\begin{equation}\label{u2}
			\Vert \Delta u(t)\Vert^{2}_{2}
			\leq 2\Vert u_{t}(t)\Vert^{2}_{2}
			+C\Vert \Delta u(t)\Vert^{2}_{2}\Vert \nabla u(t)\Vert^{4(2^{\ast}_{\mu}-1)}_{2}
		\end{equation}
		provided $\alpha,\beta\in(1/2,1)$ defined by (\ref{Index}).
		By (\ref{u0}), there exists a $t_{1}>0$ such that
		\begin{equation}\label{u3}
			C\Vert \nabla u(t)\Vert^{4(2^{\ast}_{\mu}-1)}_{2}<1/2
		\end{equation}
		for any $t>t_{1}$. Combining (\ref{u2}) with (\ref{u3}), we have
		\begin{equation}\label{u4}
			\frac{1}{4}\Vert \Delta u(t)\Vert^{2}_{2}
			\leq \Vert u_{t}(t)\Vert^{2}_{2}
		\end{equation}
		for any $t>t_{1}$.
		Therefore, it follows from (\ref{C_{0}}), (\ref{energy-identity1}) and (\ref{u4}) that
		\begin{equation*}
			\int_{t_{1}}^{+\infty}\Vert \Delta u(t)\Vert^{2}_{2}dt
			\leq C\int_{0}^{\infty}\Vert u_{t}\Vert^{2}_{2}dt
			=CJ_{\mu}(u_{0}),
		\end{equation*}
		provided $C_{0}=0$.The proof is completed.
	\end{proof}
	
	\textbf{Proof of Theorem \ref{T-Blow up infinite}.}
	$(i)\Rightarrow(ii)$. On the contrary, there exists $t_{0}\in[0,+\infty)$ such that $J_{\mu}(u(t_{0}))<m_{\mu}$, then we know that $u(t_{0})\in W_{\mu}$. Indeed, by Proposition \ref{P-Properties}, we know that $u(t_{0})\in W_{\mu}\cup V_{\mu}$. Furthermore, by Theorem \ref{T-Blow up}, if $u(t_{0})\in V_{\mu}$, then $T_{\max}<+\infty$, which is a contradiction.
	
	Next, by Theorem \ref{T-Global existence}, one has $u(t)\in W_{\mu}$ and $\Vert \nabla u(t)\Vert_{2}\rightarrow0$ as $t\rightarrow\infty$. Furthermore, by (\ref{SHL}), we also have
	\begin{equation*}
		\lim_{t\rightarrow\infty}B(u(t))=0,
	\end{equation*}
	and so $\lim_{t\rightarrow+\infty}J_{\mu}(u(t))=0$. By the definition of $C_{0}$ in (\ref{C_{0}}), we can deduce that $C_{0}=0$, which contradicts with the assumption.
	
	$(ii)\Rightarrow(iii)$. It is obviously.
	
	$(iii)\Rightarrow(iv)$. Assume that $u(t)\notin W_{\mu}\cup V_{\mu}$ for all $t\in[0,\infty)$. Then from  the continuity  of $J_{\mu}(u)$, we infer that
	\begin{equation*}
		\lim_{t\rightarrow+\infty}J_{\mu}(u(t))\geq m_{\mu},
	\end{equation*}
	and it follows from (\ref{energy-identity1}) that
	\begin{equation*}
		\int_{0}^{\infty}\Vert u_{t}(t)\Vert_{2}^{2}dt
		\leq J_{\mu}(u_{0})-m_{\mu}<+\infty.
	\end{equation*}
	Arguing by contradiction, let us assume that $\displaystyle\liminf_{t\rightarrow+\infty}\Vert u(t)\Vert_{\infty}<+\infty$. Choose $C>0$ and $t_{n}\rightarrow\infty$ such that
	$\Vert u(t_{n})\Vert_{\infty}<C$ for all $n=1,2,\cdots$, and furthermore, by parabolic regularity, there exists $\bar{t}>0$ with $t_{n+1}>t_{n}+2\bar{t}$ such that
	\begin{equation}\label{dd}
		\sup_{t\in(t_{n},t_{n}+\bar{t})}\Vert u(t)\Vert_{\infty}\leq C
	\end{equation}
	for all $n=1,2,\cdots$ and for some $C>0$. %\textcolor{blue}{[J.G. : I don't understand how do you get \eqref{dd}]}
	
	Next, since
	\begin{equation*}
		\sum_{n}\int_{t_{n}}^{t_{n}+\bar{t}}\Vert u_{t}(t)\Vert_{2}^{2}dt
		\leq\int_{0}^{+\infty}\Vert u_{t}(t)\Vert_{2}^{2}dt<+\infty,
	\end{equation*}
	we have
	\begin{equation*}
		\lim_{n\rightarrow+\infty}\int_{t_{n}}^{t_{n}+\bar{t}}\Vert u_{t}(t)\Vert_{2}^{2}dt=0,
	\end{equation*}
	and so there exists $\bar{t}_{n}\in[t_{n},t_{n}+\bar{t}]$ such that $\lim_{n\rightarrow+\infty}\Vert u_{t}(\bar{t}_{n})\Vert_{2}=0$.
	By (\ref{dd}) and parabolic regularity, we show that $\left\lbrace u(\bar{t}_{n})\right\rbrace $ is compact in $C^{2}(\bar{\Omega})$. From star-shapedness of $\Omega$, there is no non-trivial stationary solution to problem \eqref{E} and it follows that
	\begin{equation*}
		\lim_{n\rightarrow+\infty}\Vert u(\bar{t}_{n})\Vert_{\infty}=0.
	\end{equation*}
	Furthermore, using parabolic regularity again, we know that $u(\bar{t}_{n})\in W_{\mu}$ for $n$ large, which is a contradiction. Consequently, the proof of $(iii)\Rightarrow(iv)$ is complete.
	
	$(iv)\Rightarrow(i)$. On the contrary, we assume that $C_{0}=0$. By Lemma \ref{L4.2}, we show that
	\begin{equation}\label{z1}
		\liminf_{t\rightarrow+\infty}\Vert \Delta u(t)\Vert_{2}=0.
	\end{equation}
	In fact, if there exists $C>0$ such that $\Vert \Delta u(t)\Vert_{2}>C$, then we derive a contradiction with
	$\int_{t_{1}}^{+\infty}\Vert \Delta u(t)\Vert^{2}_{2}<+\infty$.
	
	For $N=3$, by (\ref{z1}) and the embedding of $H^{2}(\Omega)$ into $L^{\infty}(\Omega)$, one has
	\begin{equation}\label{z2}
		\liminf_{t\rightarrow+\infty}\Vert u(t)\Vert_{\infty}=0,
	\end{equation}
	which is a contradiction with the assumption $(iv)$.
	$\hfill{} \Box$
	
	\begin{cor}
		Suppose that $\Omega$ is a star-shaped domain. If $u(t)$ is a time-global solution, then either
		$(i)$ $u(t_{0})\in W_{\mu}$ for some $t_{0}\geq0$ and $\lim_{t\rightarrow+\infty}\Vert u(t)\Vert_{\infty}=\lim_{t\rightarrow+\infty}\Vert \nabla u(t)\Vert_{2}=0$ and $\lim_{t\rightarrow+\infty}J_{\mu}(u(t))=0$
		or $(ii)$ $u(t)\not\in W_{\mu}\cup V_{\mu}$ for all $t\geq0$ and $\lim_{t\rightarrow+\infty}\Vert u(t)\Vert_{\infty}=+\infty$, $0<\liminf_{t\rightarrow+\infty}\Vert\nabla u(t)\Vert_{2}<+\infty$ and $\lim_{t\rightarrow+\infty}J_{\mu}(u(t))>0$.
	\end{cor}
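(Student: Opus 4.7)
The plan is to derive this corollary as a clean assembly of three already-proved results: Theorem \ref{T-Blow up} (blow-up in $V_\mu$), Theorem \ref{T-Global existence} (decay from $W_\mu$), and the equivalences of Theorem \ref{T-Blow up infinite}. Since $u$ is assumed global, we have $T_{\max}=+\infty$. I would first observe that no orbit of a global solution can ever enter $V_\mu$: indeed, by Theorem \ref{T-Blow up}, $u(t_0)\in V_\mu$ for some $t_0$ would force $T_{\max}<+\infty$, contradicting globality. Hence exactly one of two mutually exclusive alternatives occurs along the orbit $\{u(t):t\geq0\}$, namely either (a) there exists $t_0\geq 0$ with $u(t_0)\in W_\mu$, or (b) $u(t)\notin W_\mu\cup V_\mu$ for every $t\geq0$. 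This dichotomy is the logical backbone of the proof.

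In alternative (a), Theorem \ref{T-Global existence} applies verbatim: it yields $u(t)\to 0$ in every $L^r(\Omega)$ for $r\in[2,\infty]$ (in particular in $L^\infty$), $u(t)\to 0$ in $H^1_0(\Omega)$ (hence $\|\nabla u(t)\|_2\to 0$), and $J_\mu(u(t))\to 0$, which is precisely the conclusion in (i). Nothing further is needed in this case.

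In alternative (b), the equivalence $(iii)\Leftrightarrow(iv)$ in Theorem \ref{T-Blow up infinite} immediately gives $\|u(t)\|_\infty\to+\infty$, and the equivalence $(iii)\Leftrightarrow(i)$ gives $C_0>0$, i.e. $\lim_{t\to\infty}J_\mu(u(t))>0$ (the limit exists by monotonicity of $J_\mu$ along the flow, Proposition \ref{P-Energy Identity}). It remains to bound $\liminf_{t\to\infty}\|\nabla u(t)\|_2$ from above and below. For the lower bound, I would use the elementary inequality $J_\mu(u)\leq \tfrac{1}{2}A(u)$ (since $B(u)\geq 0$), which gives $\|\nabla u(t)\|_2^2\geq 2J_\mu(u(t))$; passing to the limit yields $\liminf_{t\to\infty}\|\nabla u(t)\|_2^2\geq 2\lim_{t\to\infty}J_\mu(u(t))>0$. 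For the upper bound, I would invoke Lemma \ref{L4.1}, which produces a sequence $t_n\to\infty$ with $A(u(t_n))\leq\tilde C$ (and in fact $A(u(t_n))\to C_0$), showing $\liminf_{t\to\infty}\|\nabla u(t)\|_2^2\leq C_0<+\infty$. Combining these gives the assertion $0<\liminf_{t\to\infty}\|\nabla u(t)\|_2<+\infty$ of (ii).

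There is essentially no technical obstacle here: the dichotomy is forced by Theorem \ref{T-Blow up} and invariance (Proposition \ref{P-Invariant set}), and each piece of (i) and (ii) is read off directly from an earlier theorem or from the pair of elementary facts $J_\mu\leq\tfrac12 A$ and Lemma \ref{L4.1}. The only mildly delicate point worth flagging is that the $\liminf$ (rather than $\lim$) on $\|\nabla u(t)\|_2$ in (ii) is unavoidable from the available machinery, because Lemma \ref{L4.1} only controls the gradient along a subsequence via the times where $\|u_t\|_2\to 0$; I would not attempt to upgrade this to a two-sided $\lim$.
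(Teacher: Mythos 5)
Your proposal is correct and matches the paper's (implicit) derivation: the corollary is stated there without proof precisely as the assembly of Theorem \ref{T-Blow up} (excluding $V_\mu$ for global solutions, giving the dichotomy), Theorem \ref{T-Global existence} for case (i), and Theorem \ref{T-Blow up infinite} together with Lemma \ref{L4.1} and the elementary bound $J_\mu\leq\frac{1}{2}A$ for case (ii). Your handling of the two-sided control of $\liminf_{t\to\infty}\Vert\nabla u(t)\Vert_2$ via Lemma \ref{L4.1} along the times where $\Vert u_t\Vert_2\to0$ is exactly the intended mechanism (note only that, as in the paper, the conclusions of case (ii) inherit the hypotheses of Theorem \ref{T-Blow up infinite}).
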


	\section{Proof of Theorem \ref{T-Global bounds} and Corollary \ref{C-Global bounds}}
	
	In this section, we deal with $L^{\infty}$-global boundedness of the global solutions. Let $u$ be a global solution of problem \eqref{E}. Let
	\begin{equation*}
		c:=\lim_{t\rightarrow\infty}J_{\mu}(u(t)).
	\end{equation*}
	Then we have the following results.
	
	\begin{lem}\label{L5.1}
		Suppose that $J_{\mu}$ satisfies the $(PS)_{c}$-condition. Then
		\begin{equation*}
			A(u(t))\rightarrow\frac{22^{\ast}_{\mu}}{2^{\ast}_{\mu}-1}c\ \text{and}\ B(u(t))\rightarrow\frac{22^{\ast}_{\mu}}{2^{\ast}_{\mu}-1}c
		\end{equation*}
		as $t\rightarrow\infty$.
	\end{lem}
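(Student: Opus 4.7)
The plan is to view $\{u(t)\}$ as an approximate Palais--Smale trajectory for $J_\mu$ along suitably chosen times and to upgrade the resulting compactness to a full limit. First, by Proposition \ref{P-Energy Identity} together with the nonnegativity $J_\mu(u(t))\geq 0$ established in \eqref{y1}, the function $t\mapsto J_\mu(u(t))$ is non-increasing and nonnegative, so it converges to $c\geq 0$; moreover $\int_0^\infty\|u_t(\tau)\|_2^2\,d\tau\leq J_\mu(u_0)-c<\infty$. In particular, for any $t_n\to\infty$, $\int_{t_n}^{t_n+1}\|u_t\|_2^2\,d\tau\to 0$, hence by the mean value inequality there exists $s_n\in[t_n,t_n+1]$ (one can even arrange $|s_n-t_n|\to 0$ by shrinking the interval length) with $\|u_t(s_n)\|_2\to 0$.

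Second, I claim $\{u(s_n)\}$ is a $(PS)_c$ sequence for $J_\mu$ in $H^1_0(\Omega)$. Testing the PDE against an arbitrary $\varphi\in H^1_0(\Omega)$ yields $\langle J_\mu'(u(s_n)),\varphi\rangle=-(u_t(s_n),\varphi)_{L^2(\Omega)}$, so by Cauchy--Schwarz and Poincar\'e $\|J_\mu'(u(s_n))\|_{H^{-1}(\Omega)}\leq C\|u_t(s_n)\|_2\to 0$, while $J_\mu(u(s_n))\to c$ by construction. The $(PS)_c$ hypothesis then produces a subsequence, still labeled $\{u(s_n)\}$, converging strongly in $H^1_0(\Omega)$ to some $u^\ast$ with $J_\mu(u^\ast)=c$ and $J_\mu'(u^\ast)=0$. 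The latter forces $I_\mu(u^\ast)=0$, so $A(u^\ast)=B(u^\ast)$, which combined with $J_\mu(u^\ast)=\frac{2^\ast_\mu-1}{22^\ast_\mu}A(u^\ast)=c$ yields $A(u^\ast)=B(u^\ast)=\frac{22^\ast_\mu}{2^\ast_\mu-1}c$. Strong $H^1_0$-convergence and the Hardy--Littlewood--Sobolev inequality then give $A(u(s_n)),B(u(s_n))\to\frac{22^\ast_\mu}{2^\ast_\mu-1}c$ along this auxiliary sequence.

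The main obstacle is to promote this subsequential statement into a full limit as $t\to\infty$. My plan is a contradiction argument: if there existed $\varepsilon>0$ and $\tilde t_n\to\infty$ with $|A(u(\tilde t_n))-\frac{22^\ast_\mu}{2^\ast_\mu-1}c|\geq\varepsilon$, then applying the construction of the previous paragraph with shrinking intervals $[\tilde t_n,\tilde t_n+\varepsilon_n]$, $\varepsilon_n\to 0$, delivers $s_n$ with $|s_n-\tilde t_n|\to 0$, $\|u_t(s_n)\|_2\to 0$, and $u(s_n)\to u^\ast$ strongly in $H^1_0(\Omega)$. It then suffices to show $|A(u(\tilde t_n))-A(u(s_n))|\to 0$. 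For this, I would use the identity $\frac{d}{d\tau}B(u(\tau))=2\cdot 2^\ast_\mu\int_\Omega f(u(\tau))u_t(\tau)\,dx$, which together with Cauchy--Schwarz, a uniform $H^1_0$-bound on the orbit on $[\tilde t_n,s_n]$ (inherited from the strong convergence of $\{u(s_n)\}$ and the continuous dependence from Proposition \ref{P-Local existence}), and $\int_{\tilde t_n}^{s_n}\|u_t\|_2^2\,d\tau\to 0$, yields $|B(u(s_n))-B(u(\tilde t_n))|\to 0$. The tautology $A=2J_\mu+\frac{1}{2^\ast_\mu}B$ together with $J_\mu(u(t))\to c$ then propagates this to $|A(u(s_n))-A(u(\tilde t_n))|\to 0$, contradicting the assumption and completing the proof.
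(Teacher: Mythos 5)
Your first half is sound and coincides with the paper's opening steps: choosing times $s_n\to\infty$ with $\Vert u_t(s_n)\Vert_2\to0$, observing that $\langle J_\mu'(u(s_n)),\varphi\rangle=-\int_\Omega u_t(s_n)\varphi\,dx$ makes $\{u(s_n)\}$ a $(PS)_c$ sequence, and extracting a strong $H^1_0$ limit $u^\ast$ with $A(u^\ast)=B(u^\ast)=\frac{22^{\ast}_{\mu}}{2^{\ast}_{\mu}-1}c$ (two small points: to get simultaneously $|s_n-\tilde t_n|\to0$ and $\Vert u_t(s_n)\Vert_2\to0$ you must tie the interval length to the tail integral, e.g. $\varepsilon_n=\bigl(\int_{\tilde t_n}^{\tilde t_n+1}\Vert u_t\Vert_2^2\,d\tau\bigr)^{1/2}$, and the $(PS)_c$ limit is only along a subsequence, which however suffices for a contradiction argument). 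The genuine gap is the transfer step $|B(u(\tilde t_n))-B(u(s_n))|\to0$. First, the estimate $\bigl|\frac{d}{d\tau}B(u(\tau))\bigr|\leq 2\cdot2^{\ast}_{\mu}\Vert f(u(\tau))\Vert_2\Vert u_t(\tau)\Vert_2$ is useless under a mere $H^1_0$ bound: from $u\in H^1_0(\Omega)\hookrightarrow L^{2^\ast}(\Omega)$, H\"older and Hardy--Littlewood--Sobolev only give $f(u)\in L^{2N/(N+2)}(\Omega)$, not $L^2(\Omega)$; controlling $\Vert f(u)\Vert_2$ requires $D(A^\alpha)\cap D(A^\beta)$ or $L^\infty$ bounds with $\alpha,\beta>\frac12$ (this is exactly why the local theory of Section 2 works in those spaces), i.e. precisely the kind of uniform bound this lemma is being used to establish, so the argument is circular. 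Second, the uniform $H^1_0$ bound on $[\tilde t_n,s_n]$ is itself not available: strong convergence of $u(s_n)$ controls only the right endpoint, the parabolic flow cannot be run backward, and continuous dependence forward from $\tilde t_n$ would require a bound on $\Vert u(\tilde t_n)\Vert_{H^1_0}$, which is not known -- the energy identity controls only $\frac12A-\frac{1}{22^{\ast}_{\mu}}B$, and a genuine jump of $A$ and $B$ between $\tilde t_n$ and $s_n$ (concentration) is exactly the phenomenon that must be excluded, not an a priori continuity fact.

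The paper's proof avoids any continuity-in-time estimate for $B$. It never shows $A(u(\tilde t_n))-A(u(s_n))\to0$; instead it shows that a persistent discrepancy must be quantitatively large. Concretely, assuming $A(u(\tilde t_n))=\frac{22^{\ast}_{\mu}}{2^{\ast}_{\mu}-1}c+\sigma+o(1)$ with $\sigma\neq0$, the intermediate value theorem (between the good times $\bar t_n$ and the bad times $\tilde t_n$) selects times $t_n$ where the discrepancy equals a prescribed $\bar\sigma\neq0$ strictly below $\bigl[(2^{\ast}_{\mu})^{\frac{N-2}{2N-\mu}}S_{H,L}\bigr]^{\frac{2N-\mu}{N-\mu+2}}$; the weak $H^1_0$ limit $\bar u$ of $u(t_n)$ is identified with the strong $(PS)_c$ limit $w$ of $u(t_n+\eta_n)$, $\eta_n\in[0,1]$, using only $\Vert u(t_n)-u(t_n+\eta_n)\Vert_2\leq\bigl(\int_{t_n}^{t_n+1}\Vert u_t\Vert_2^2\,d\tau\bigr)^{1/2}\to0$ (no shrinking intervals, no $H^1_0$ bound on the whole segment); and then the Brezis--Lieb decomposition of $A$ and $B$ about $\bar u$ together with the definition of $S_{H,L}$ forces $\bar\sigma\geq\bigl[(2^{\ast}_{\mu})^{\frac{N-2}{2N-\mu}}S_{H,L}\bigr]^{\frac{2N-\mu}{N-\mu+2}}$, a contradiction. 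To repair your argument you should replace the time-derivative estimate of $B$ by a concentration-compactness step of this type.
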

	\begin{proof}
		By Proposition \ref{P-Energy Identity} and (\ref{y1}), we know that $J_{\mu}(u(t))$ is nonincreasing in $t$ and is bounded from below, thus there exists a sequence $\left\lbrace \bar{t}_{n}\right\rbrace $ with $\bar{t}_{n}\rightarrow\infty$ as $n\rightarrow\infty$ such that
		\begin{equation*}
			\frac{d}{dt}J_{\mu}(u(\bar{t}_{n}))\rightarrow0,\ \text{as}\ n\rightarrow\infty.
		\end{equation*}
		Using Proposition \ref{P-Energy Identity} again, we have 
		\begin{equation}\label{t1}
			\|u_t(\bar{t}_n)\|_2\to 0
		\end{equation}
		as $n\rightarrow\infty$.
		
		We claim that
		\begin{equation}\label{t2}
			A(u(\bar{t}_{n}))=\frac{22^{\ast}_{\mu}}{2^{\ast}_{\mu}-1}c+o(1)\ \text{and}\ B(u(\bar{t}_{n}))=\frac{22^{\ast}_{\mu}}{2^{\ast}_{\mu}-1}c+o(1).
		\end{equation}
		Indeed, by the definition of $J_{\mu}$ and (\ref{t1}), for any $\phi \in H^{1}_{0}(\Omega)$, one has
		\begin{equation}\label{t0}
			\begin{aligned}
				\vert\left\langle J^{\prime}_{\mu}(u(\bar{t}_{n})),\phi\right\rangle \vert=
				&\bigg\vert \int_{\Omega}\Delta u(\bar{t}_{n})\phi dx
				 +\int_{\Omega}\int_{\Omega}\frac{|u(y,\bar{t}_{n})|^{2^{\ast}_{\mu}}}{|x-y|^{\mu}}dy|u(x,\bar{t}_{n})|^{2^{\ast}_{\mu}-2}u(x,\bar{t}_{n})\phi dx\bigg\vert\\
				=&\int_{\Omega}u_{t}(\bar{t}_{n})\phi dx
				\leq \Vert u_{t}(\bar{t}_{n})\Vert_{2}\Vert \phi\Vert_{2}\rightarrow0\ \text{as}\ n\rightarrow\infty.
			\end{aligned}
		\end{equation}
		Taking $\phi =u(\bar{t}_{n})$, we have
		\begin{equation}\label{t3}
			A(u(\bar{t}_{n}))-B(u(\bar{t}_{n}))=o(1)\ \text{as}\ n\rightarrow\infty.
		\end{equation}
		Furthermore, we derive
		\begin{equation*}
			A(u(\bar{t}_{n}))=\frac{22^{\ast}_{\mu}}{2^{\ast}_{\mu}-1}c+o(1)\ \text{and}\ B(u(\bar{t}_{n}))=\frac{22^{\ast}_{\mu}}{2^{\ast}_{\mu}-1}c+o(1)
		\end{equation*}
		since
		\begin{equation}\label{t4}
			c+o(1)=J_{\mu}(u(\bar{t}_{n}))=\frac{1}{2}A(u(\bar{t}_{n}))-\frac{1}{22^{\ast}_{\mu}}B(u(\bar{t}_{n})).
		\end{equation}
		Consequently, the proof of the claim is complete.
		
		Now let us suppose that the conclusion does not hold, then there exists a sequence $\left\lbrace \tilde{t}_{n}\right\rbrace $ with $\tilde{t}_{n}\rightarrow\infty$ as $n\rightarrow\infty$ and $\sigma\not=0$ such that
		\begin{equation}\label{t5}
			A(u(\tilde{t}_{n}))=\frac{22^{\ast}_{\mu}}{2^{\ast}_{\mu}-1}c+\sigma+o(1).
		\end{equation}
		
		We claim that there exists a sequence $\left\lbrace t_{n}\right\rbrace $ with $t_{n}\rightarrow\infty$ and $\bar{\sigma}$ with
		\begin{equation}\label{t6}
			\bar{\sigma}\leq\sigma\,, \bar{\sigma}<\left[(2^{\ast}_{\mu})^{\frac{N-2}{2N-\mu}}S_{H,L} \right] ^{\frac{2N-\mu}{N-\mu+2}}\ \text{and}\ \bar{\sigma}\not=0
		\end{equation}
		such that
		\begin{equation}\label{t7}
			A(u(t_{n}))=\frac{22^{\ast}_{\mu}}{2^{\ast}_{\mu}-1}c+\bar{\sigma}+o(1).
		\end{equation}
		Indeed, if $\sigma<0$, then taking $t_{n}=\tilde{t}_{n}$ and $\bar{\sigma}=\sigma$ and we derive the claim. If $\sigma>0$, it follows from (\ref{t2}), (\ref{t5}) and the Mean Value Theorem that there exists $t_{n}\in (\bar{t}_{n},\tilde{t}_{n})$ (without loss of generality, we can assume $\bar{t}_{n}<\tilde{t}_{n}$) satisfies (\ref{t7}) for any $\bar{\sigma}$ with (\ref{t6}).
		
		Next, it follows from (\ref{t7}) that
		\begin{equation*}
			c+o(1)=J_{\mu}(u(t_{n}))=\frac{1}{2}\left( \frac{22^{\ast}_{\mu}}{2^{\ast}_{\mu}-1}c+\bar{\sigma}\right) -\frac{1}{22^{\ast}_{\mu}}B(u(t_{n}))+o(1).
		\end{equation*}
		Furthermore, we derive
		\begin{equation}\label{t8}
			B(u(t_{n}))=\frac{22^{\ast}_{\mu}}{2^{\ast}_{\mu}-1}c+2^{\ast}_{\mu}\bar{\sigma}+o(1).
		\end{equation}
		By (\ref{t7}), $\left\lbrace u(t_{n})\right\rbrace $ is a bounded sequence in $H^{1}_{0}(\Omega)$, and then up to a subsequence there exists $\bar{u}\in H^{1}_{0}(\Omega)$ such that
		\begin{equation}\label{t9}
			u(t_{n})\rightharpoonup\bar{u}\ \text{in}\ H^{1}_{0}(\Omega)\mbox{ as }n\to\infty
		\end{equation}
		and from compact imbedding, we have
		\begin{equation}\label{t10}
			u(t_{n})\rightarrow\bar{u}\ \text{in}\ L^{2}(\Omega)\mbox{ as }n\to\infty.
		\end{equation}
		%as $n\rightarrow\infty$.
		
		Next, by Proposition \ref{P-Energy Identity}, one has
		\begin{equation}\label{t11}
			\int_{t_{n}}^{t_{n}+1}\Vert u_{t}(\tau)\Vert^{2}_{2}d\tau
			=J_{\mu}(u(t_{n}))-J_{\mu}(u(t_{n}+1))\rightarrow c-c=0.
		\end{equation}
		Thus, there exist $\eta=\eta_n\in [0,1]$ such that $\Vert u_{t}(t_{n}+\eta)\Vert_{2}\rightarrow0$ as $n\rightarrow\infty$. Using (\ref{t0}), we know that $\left\lbrace u(t_{n}+\eta)\right\rbrace $ is a $(PS)_{c}$ sequence. Furthermore, since $J_\mu$ satisfies the $(PS)_{c}$-condition, there exists $w\in H^{1}_{0}(\Omega)$ such that
		\begin{equation}\label{t13}
			u(t_{n}+\eta)\rightarrow w\ \text{in}\ H^{1}_{0}(\Omega)
		\end{equation}
		as $n\rightarrow\infty$ and
		\begin{equation}\label{t14}
			A(w)= B(w)=\frac{22^{\ast}_{\mu}}{2^{\ast}_{\mu}-1}c.
		\end{equation}
		It follows from (\ref{t10}) and (\ref{t13}) that
		\begin{eqnarray}\label{conv}
			\Vert \bar{u}-w\Vert_{2}
			&\leq\Vert \bar{u}-u(t_{n})\Vert_{2}+\Vert u(t_{n})-u(t_{n}+\eta)\Vert_{2}+\Vert u(t_{n}+\eta)-w\Vert_{2}\nonumber\\
		&	=o(1)+\Vert u(t_{n})-u(t_{n}+\eta)\Vert_{2}.
		\end{eqnarray}
	Moreover, 
		\begin{equation*}
			\Vert u(t_{n})-u(t_{n}+\eta)\Vert_{2}\leq \int_{t_n}^{t_n+\eta}\Vert u_t(\tau)\Vert_2d\tau\leq \left(\int_{t_{n}}^{t_{n}+1}\Vert u_{t}(\tau)\Vert^{2}_{2}d\tau\right)^{\frac{1}{2}}=o(1)
		\end{equation*}
		which implies  together with \eqref{conv} that $\bar{u}=w$. Furthermore, by (\ref{t14}), we derive that
		\begin{equation}\label{t15}
			A(\bar{u})=B(\bar{u})=\frac{22^{\ast}_{\mu}}{2^{\ast}_{\mu}-1}c,
		\end{equation}
		and since $u(t_{n})\rightharpoonup\bar{u}$ in $H^{1}_{0}(\Omega)$
		\begin{eqnarray}\label{t16}
			A(u(t_{n}))=&A(\bar{u})+A(u(t_{n})-\bar{u})+2\left\langle \nabla(u(t_n)-\bar{u}),\nabla\bar{u}\right\rangle\nonumber\\
		&=\frac{22^{\ast}_{\mu}}{2^{\ast}_{\mu}-1}c+A(u(t_{n})-\bar{u})+2\left\langle \nabla(u(t_n)-\bar{u}),\nabla\bar{u}\right\rangle\nonumber\\
		&=\frac{22^{\ast}_{\mu}}{2^{\ast}_{\mu}-1}c+A(u(t_{n})-\bar{u})+o(1).
		\end{eqnarray}
		By Brezis-Lieb Lemma of Hartree type and (\ref{t15}), we also have
	\begin{equation}\label{t17}
			B(u(t_{n}))=B(\bar{u})+B(u(t_{n})-\bar{u})+o(1)
			=\frac{22^{\ast}_{\mu}}{2^{\ast}_{\mu}-1}c+B(u(t_{n})-\bar{u})+o(1).
		\end{equation}
		Next, it follows from (\ref{t7})-(\ref{t8}) and (\ref{t16})-(\ref{t17}) that
		\begin{equation}\label{t18}
			A(u(t_{n})-\bar{u})=\bar{\sigma}+o(1)
		\end{equation}
		and
		\begin{equation}\label{t19}
			B(u(t_{n})-\bar{u})=2^{\ast}_{\mu}\bar{\sigma}+o(1).
		\end{equation}
		By (\ref{SHL}), we have
		\begin{equation}\label{t20}
			\left( B(u(t_{n})-\bar{u})\right) ^{\frac{N-2}{2N-\mu}}S_{H,L}\leq A(u(t_{n})-\bar{u})
		\end{equation}
		where $S_{H,L}$ is the best constant in the sense of the Hardy-Littlewood-Sobolev inequality. Therefore, by (\ref{t18})-(\ref{t20}), we derive
		\begin{equation*}
			\bar{\sigma}\geq\left[(2^{\ast}_{\mu})^{\frac{N-2}{2N-\mu}}S_{H,L} \right] ^{\frac{2N-\mu}{N-\mu+2}}
		\end{equation*}
		which contradicts (\ref{t6}). The proof is now completed.
	\end{proof}
	
	\textbf{Proof of Theorem \ref{T-Global bounds}.}
	$(i)\Rightarrow(ii)$. Arguing by contradiction, one has that there exists a sequence $\left\lbrace t_{n}\right\rbrace $ with $t_{n}\rightarrow\infty$ and $\left\lbrace x_{n}\right\rbrace\subset\Omega$ such that
	\begin{equation}\label{s1}
		\Vert u(t_{n})\Vert_{\infty}\rightarrow\infty\ \text{as}\ n\rightarrow\infty,\ \sup_{t\in[0,t_{n}]}\Vert u(t)\Vert_{\infty}=\Vert u(t_{n})\Vert_{\infty}
	\end{equation}
	and
	\begin{equation}\label{s2}
		\vert u(x_{n},t_{n})\vert\geq\frac{\Vert u(t_{n})\Vert_{\infty}}{2}.
	\end{equation}
	Let
	\begin{equation*}
		\lambda_{n}:=\Vert u(t_{n})\Vert^{\frac{2}{N-2}}_{\infty}.
	\end{equation*}
	and
	\begin{equation}\label{s2-1}
		v_{n}(y,s):=\lambda_{n}^{-\frac{N-2}{2}}u(x,t),
	\end{equation}
	where $y=\lambda_{n}(x-x_{n})$ and $s=\lambda_{n}^{2}(t-t_{n})$. Then, by (\ref{s1})-(\ref{s2-1}), it is easy to see that $\lambda_{n}\rightarrow\infty$ as $n\rightarrow\infty$ and
	\begin{equation}\label{s3}
		\sup_{s\in[-1,0]}\Vert v_{n}(s)\Vert_{\infty}
		\leq\Vert v_{n}(0)\Vert_{\infty}
			=1
	\end{equation}
	and
	\begin{equation}\label{s3-1}
		\vert v_{n}(y,s)|_{y=0,s=0}\vert
		=\vert \lambda_{n}^{-\frac{N-2}{2}}u(x_{n},t_{n})\vert
		\geq\frac{1}{2}.
	\end{equation}
	Furthermore, there exists $\bar{x}\in \bar{\Omega}$ such that up to an extraction of a subsequence $x_{n}\rightarrow \bar{x}$. Following \cite{Gidas-Spruck}, we then considered two separate cases.
	
	$(I)$ The case $\bar{x}\in \Omega$. We infer that $y\in\lambda_{n}(\Omega-x_{n})\rightarrow\mathbb{R}^{N}$.
	Let
	\begin{equation*}
		B_{x}(\bar{x},\xi):=\left\lbrace x\in\mathbb{R}^{N}\ |\ |x-\bar{x}|<\xi\right\rbrace,\
		B_{x}(x_{n},R/\lambda_{n}):=\left\lbrace x\in\mathbb{R}^{N}\ |\ |x-x_{n}|<R/\lambda_{n}\right\rbrace.
	\end{equation*}
	Clearly, since $\bar{x}\in\Omega$, we know that $B_{x}(\bar{x},\xi)\subset\Omega$ for $\xi$ small enough. We also know that $B_{x}(x_{n},R/\lambda_{n})\subset B_{x}(\bar{x},\xi)$ for $n$ large enough provided $x_{n}\rightarrow\bar{x}$ and $\lambda_{n}\rightarrow\infty$.
	
	Similar to (\ref{d7}), it follows from (\ref{s3}) that there exists a subsequence (still denoted by $\left\lbrace v_{n}\right\rbrace $) and a $v$ such that
	\begin{equation}\label{s4}
		v_{n}\rightarrow v\ \text{in}\ C_{loc}((-1,\epsilon)\times\mathbb{R}^{N})).
	\end{equation}
	Next, by Propositions \ref{P-Energy Identity} and \ref{P-Scaling properties}, we have
	\begin{equation}\nonumber
		\begin{aligned}
			\left\Vert\frac{\partial v_{n}}{\partial s}\right\Vert^{2}_{L^{2}((-1,\epsilon)\times\mathbb{R}^{N})}
			=&\left\Vert\frac{\partial u}{\partial t}\right\Vert^{2}_{L^{2}((t_{n}-1/\lambda_{n}^{2},t_{n}+\epsilon/\lambda_{n}^{2})\times\Omega)}\\
			=&J_{\mu}(u(t_{n}-1/\lambda_{n}^{2}))-J_{\mu}(u(t_{n}+\epsilon/\lambda_{n}^{2}))\rightarrow0,
		\end{aligned}
	\end{equation}
	which implies that $v$ is independent of $s$ provided the same argument as \eqref{conv}.  By (\ref{s3-1}) and (\ref{s4}), we know that $\big\vert v(y)|_{y=0}\big\vert\geq1/2$. Furthermore, there exists $R>0$ small enough such that
	\begin{equation}\label{s5}
		\int_{B_{y}(0,R)}\int_{B_{y}(0,R)}\frac{|v(y)|^{2^{\ast}_{\mu}}|v(z)|^{2^{\ast}_{\mu}}}{|y-z|^{\mu}}dydz>0,
	\end{equation}
	where $B_{y}(0,R):=\left\lbrace y\in\mathbb{R}^{N}\ |\ |y|<R\right\rbrace$.
	
	On the other hand, by (\ref{s4}), Proposition \ref{P-Scaling properties} and the Hardy-Littlewood-Sobolev inequality, we derive that
	\begin{equation}\label{s6}
		\begin{aligned}
			\int_{B_{y}(0,R)}\int_{B_{y}(0,R)}\frac{|v(y)|^{2^{\ast}_{\mu}}|v(z)|^{2^{\ast}_{\mu}}}{|y-z|^{\mu}}&dydz
			 =\int_{B_{y}(0,R)}\int_{B_{y}(0,R)}\frac{|v_{n}(y,0)|^{2^{\ast}_{\mu}}|v_{n}(z,0)|^{2^{\ast}_{\mu}}}{|y-z|^{\mu}}dydz+o(1)\\
			 =&\int_{B_{x}(x_{n},R/\lambda_{n})}\int_{B_{x}(x_{n},R/\lambda_{n})}\frac{|u(x,t_{n})|^{2^{\ast}_{\mu}}|u(z,t_{n})|^{2^{\ast}_{\mu}}}{|x-z|^{\mu}}dxdz+o(1)\\
			 \leq&\int_{B_{x}(\bar{x},\xi)}\int_{B_{x}(\bar{x},\xi)}\frac{|u(x,t_{n})|^{2^{\ast}_{\mu}}|u(z,t_{n})|^{2^{\ast}_{\mu}}}{|x-z|^{\mu}}dxdz+o(1).
		\end{aligned}
	\end{equation}
	
	We claim that there exists $\tilde{u}$ such that
	\begin{equation}\label{s7}
		 \int_{B_{x}(\bar{x},\xi)}\int_{B_{x}(\bar{x},\xi)}\frac{|u(x,t_{n})|^{2^{\ast}_{\mu}}|u(z,t_{n})|^{2^{\ast}_{\mu}}}{|x-z|^{\mu}}dxdz\rightarrow \int_{B_{x}(\bar{x},\xi)}\int_{B_{x}(\bar{x},\xi)}\frac{|\tilde{u}(x)|^{2^{\ast}_{\mu}}|\tilde{u}(z)|^{2^{\ast}_{\mu}}}{|x-z|^{\mu}}dxdz
	\end{equation}
	as $n\rightarrow\infty$.
	Indeed, by Lemma \ref{L5.1}, there exists $\tilde{u}\in H^{1}_{0}(\Omega)$ such that
	\begin{equation*}
		u(t_{n})\rightharpoonup\tilde{u}\ \text{in}\ H^{1}_{0}(\Omega)
	\end{equation*}
	and by weak lower semicontinuity, we also have
	\begin{equation}\label{}
		B(\tilde{u})\leq\liminf_{n\rightarrow\infty} B(u(t_{n}))
		=\frac{22^{\ast}_{\mu}}{2^{\ast}_{\mu}-1}c+o(1).
	\end{equation}
	Similar to the proof of (\ref{t15}), we prove that $B(\tilde{u})=\frac{22^{\ast}_{\mu}}{2^{\ast}_{\mu}-1}c$.
	Therefore, we have $B(u(t_{n}))\rightarrow B(\tilde{u})$, which implies (\ref{s7}).
	It follows from (\ref{s6}) and (\ref{s7}) that
	\begin{equation*}
		\int_{B_{y}(0,R)}\int_{B_{y}(0,R)}\frac{|v(y)|^{2^{\ast}_{\mu}}|v(z)|^{2^{\ast}_{\mu}}}{|y-z|^{\mu}}dydz=0
	\end{equation*}
	since
	\begin{equation*}
		 \int_{B_{x}(\bar{x},\xi)}\int_{B_{x}(\bar{x},\xi)}\frac{|\tilde{u}|^{2^{\ast}_{\mu}}|\tilde{u}|^{2^{\ast}_{\mu}}}{|x-z|^{\mu}}dxdz\rightarrow0\ \text{as}\ \xi\rightarrow0,
	\end{equation*}
	which contradicts (\ref{s5}).
	
	$(II)$ The case $\bar{x}\in\partial\Omega$. We may assume that $\partial \Omega$ is contained in the hyperplane $x_{N}=0$.
	Define
	\begin{equation*}
		\bar{\kappa}:=\limsup_{n\rightarrow+\infty}\lambda_{n}dist(x_{n}, \partial\Omega)\geq 0.
	\end{equation*}
	
We first claim that $\bar{\kappa}\geq C_{1}>0$ for some $C_{1}$. Indeed, by parabolic $L^{p}$ theory up to the boundary, we get $|\nabla_{y} v_{n}|$ is uniformly bounded. In particular,
	\begin{equation*}
		|v_{n}(0)-v_{n}(0^{\prime},\bar{\kappa})|\leq C\bar{\kappa}
	\end{equation*}
	where $x^{\prime}:=x_{1},\cdots,x_{N-1}$. Furthermore, it follows from \eqref{s2-1} that
	\begin{equation*}
		1-\lambda_{n}^{-\frac{N-2}{2}}\sup u(x_{n}^{\prime},x_{n}+\bar{\kappa}/\lambda_{n})\leq C\bar{\kappa}
	\end{equation*}
	By \eqref{E} and $\lambda_{n}\rightarrow\infty$ as $n\rightarrow\infty$, we have $\sup u(x_{n}^{\prime},x_{n}+\bar{\kappa}/\lambda_{n})$ is bounded, furthermore, we show that $\bar{\kappa}$ is bounded from below for some $C_{1}>0$. Consequently, the claim holds.
	If $\bar\kappa=\infty$, we argue as in case (I) to derive contradiction. Suppose that $0<\bar\kappa<\infty$.  Then $y\in\lambda_{n}(\Omega-x_{n})\rightarrow\tilde{\mathbb{R}}^{N}$, where $\tilde{\mathbb{R}}^{N}:=\left\lbrace y=(y_{1},y_{2},\cdots,y_{N})\in\mathbb{R}^{N}\ |\ y_{N}>-\bar{\kappa}\right\rbrace $.
	Similarly, there exist a subsequence (still denoted by $\left\lbrace v_{n}\right\rbrace $) and  $v$ such that $v_{n}\rightarrow v\ \text{in}\ C_{loc}((-1,\epsilon)\times\tilde{\mathbb{R}}^{N}))$ for some $\epsilon >0$. Therefore, $v_{n}(0)\rightarrow v$ uniformly in $B_{y}(0,\bar{\kappa}/2)$. Furthermore, by (\ref{s3-1}), we have
	\begin{equation*}
		 \int_{B_{y}(0,\bar{\kappa}/2)}\int_{B_{y}(0,\bar{\kappa}/2)}\frac{|v(y)|^{2^{\ast}_{\mu}}|v(z)|^{2^{\ast}_{\mu}}}{|y-z|^{\mu}}dydz>0.
	\end{equation*}
	Then again we can argue  as in the case $\bar{x}\in\Omega$ and get the contradiction.
	
	$(ii)\Rightarrow(i)$. Let $t_{n}\rightarrow\infty$ as $n\rightarrow\infty$ be a sequence satisfies $\lim_{n\rightarrow+\infty}J_{\mu}(u(t_{n}))=c$ and
	\begin{equation}\label{f1}
		J_{\mu}^{\prime}(u(t_{n}))\rightarrow0\ \text{in}\ (H^{1}_{0}(\Omega))^{\ast}
	\end{equation}
	as $n\rightarrow\infty$. We shall show that $\left\lbrace u(t_{n})\right\rbrace $ admits a convergent subsequence in $H^{1}_{0}(\Omega)$ (with norm topology).
	
	By (\ref{f1}), we know that
	\begin{equation}\label{f2}
		A(u(t_{n}))
		-B(u(t_{n}))=o(1).
	\end{equation}
	Since $\sup_{t\in[0,+\infty)}\Vert u(t)\Vert_{\infty}<\infty$, by parabolic regularity, there exist a subsequence (still denoted by $\left\lbrace t_{n}\right\rbrace $) and $u$ such that $u(t_{n})\rightarrow u$ in $C(\bar{\Omega})$ as $n\rightarrow\infty$ and
	\begin{equation}\label{f3}
		u(t_{n})\rightarrow u\ \text{in}\ L^{2}\ \text{and}\ L^{2^{\ast}}
	\end{equation}
	as $n\rightarrow\infty$.
	Furthermore, by the Hardy-Littlewood-Sobolev inequality, we also know that
	\begin{equation}\label{f4}
		B(u(t_{n}))=B(u)+o(1).
	\end{equation}
	It follows from Proposition \ref{P-Energy Identity} and (\ref{f4}) that there exists a constant $C>0$ such that
	\begin{equation*}
		A(u(t_{n}))
		=2J_{\mu}(u(t_{n}))+\frac{1}{2^{\ast}_{\mu}}B(u(t_{n}))<C,
	\end{equation*}
	which implies that
	\begin{equation}\label{f5}
		u(t_{n})\rightharpoonup u\ \text{weakly in}\ H^{1}_{0}(\Omega)
	\end{equation}
	as $n\rightarrow\infty$.
	
	Next, by (\ref{f1}) and (\ref{f5}), it follows that
	\begin{equation*}
		o(1)=\left\langle J_{\mu}^{\prime}(u(t_{n})),u\right\rangle =\left\langle J_{\mu}^{\prime}(u),u\right\rangle
		=A(u)-B(u),
	\end{equation*}
	which implies that
	\begin{equation}\label{f6}
		A(u)=B(u).
	\end{equation}
	Therefore, by (\ref{f2}), (\ref{f4}) and $(\ref{f6})$, we obtain
	\begin{equation*}
		A(u(t_{n}))=B(u(t_{n}))+o(1)=B(u)+o(1)=A(u)+o(1)
	\end{equation*}
	from which we get $u(t_n)\to u$ in $H^1_0(\Omega)$ as $n\to\infty$ and the proof is completed. \qed
	
	\textbf{Proof of Corollary \ref{C-Global bounds}.}  Since $c<\frac{N-\mu+2}{2(2N-\mu)}S^{\frac{2N-\mu}{N-\mu+2}}_{H,L}$, by the fact that $J_{\mu}$ satisfies the $(PS)_{c}$ condition in \cite{Gao-Yang}. Then we complete the proof by appying Theorem \ref{T-Global bounds}. \qed
	
	\subsection*{Acknowledgements} Minbo Yang is partially supported by NSFC (11971436, 12011530199) and ZJNSF (LZ22A010001). V.D.~R\u adulescu was supported by the grant ``Nonlinear Differential Systems in Applied Sciences" of
		the Romanian Ministry of Research, Innovation and Digitization, within PNRR-III-C9-2022-I8/22.
	
	\bibliographystyle{plain}
	
\end{document}